\documentclass[12pt]{article}
\title{Dp-finite and Noetherian NIP integral domains}
\author{Will Johnson}

\usepackage{amsmath, amssymb, amsthm}    	
\usepackage{fullpage} 	
\usepackage{amscd}
\usepackage{hyperref}
\usepackage[all]{xy}
\usepackage[T1]{fontenc}
\usepackage{lmodern}
\usepackage{centernot}
\usepackage{enumitem}
\usepackage{titlefoot}

\DeclareMathOperator*{\ind}{\raise0.2ex\hbox{\ooalign{\hidewidth$\vert$\hidewidth\cr\raise-0.9ex\hbox{$\smile$}}}}

\newcommand{\NIP}{\mathrm{NIP}}

\newcommand{\df}{\mathrm{def}}

\newcommand{\Ann}{\operatorname{Ann}}
\newcommand{\Frac}{\operatorname{Frac}}

\newcommand{\br}{\operatorname{br}}
\newcommand{\Sh}{\mathrm{Sh}}

\newcommand{\characteristic}{\operatorname{char}}
\newcommand{\Spec}{\operatorname{Spec}}

\newcommand{\dpr}{\operatorname{dp-rk}}

\newcommand{\Sub}{\operatorname{Sub}}

\newtheorem{theorem}{Theorem}[section] 

\newtheorem{lemma}[theorem]{Lemma}

\newtheorem{corollary}[theorem]{Corollary}
\newtheorem{fact}[theorem]{Fact}

\newtheorem{conjecture}[theorem]{Conjecture}

\newtheorem{proposition}[theorem]{Proposition}
\newtheorem{proposition-eh}[theorem]{Proposition(?)}
\newtheorem*{theorem-star}{Theorem}
\newtheorem*{conjecture-star}{Conjecture}
\newtheorem*{lemma-star}{Lemma}

\theoremstyle{definition}
\newtheorem{definition}[theorem]{Definition}
\newtheorem{example}[theorem]{Example}

\newtheorem{remark}[theorem]{Remark}

\newtheorem{step}{Step}

\theoremstyle{remark}

\newtheorem*{acknowledgment}{Acknowledgments}

\newcommand{\Qq}{\mathbb{Q}}
\newcommand{\Qp}{\mathbb{Q}_p}

\newcommand{\Zz}{\mathbb{Z}}

\newcommand{\Nn}{\mathbb{N}}

\newcommand{\Ff}{\mathbb{F}}

\newcommand{\Oo}{\mathcal{O}}
\newcommand{\mm}{\mathfrak{m}}
\newcommand{\pp}{\mathfrak{p}}
\newcommand{\qq}{\mathfrak{q}}

\newenvironment{stepproof}[1][\proofname]
               {
                 \proof[#1]
                 
               }
               {
                 \endproof
               }

\begin{document}

\maketitle\unmarkedfntext{
  \emph{2020 Mathematical Subject Classification}: 03C60, 12L12.

  \emph{Key words and phrases}: NIP, dp-rank, Noetherian rings
  }

\begin{abstract}
  We prove some results on NIP integral domains, especially those that
  are Noetherian or have finite dp-rank.  If $R$ is an NIP Noetherian
  domain that is not a field, then $R$ is a semilocal ring of Krull
  dimension 1, and the fraction field of $R$ has characteristic 0.
  Assuming the henselianity conjecture (on NIP valued fields), $R$ is
  a henselian local ring.  Additionally, we show that integral domains
  of finite dp-rank are henselian local rings.  Finally, we lay some
  groundwork for the study of Noetherian domains of finite dp-rank,
  and we classify dp-minimal Noetherian domains.
\end{abstract}

\section{Introduction}
In this paper, rings are commutative and unital.  We consider rings
and fields as structures in a language $\mathcal{L}$ expanding the
language of rings.  ``Definable'' means ``definable with parameters''.  Recall that a \emph{local ring} is a ring $R$ with a unique maximal
ideal $\mm$, and a local ring $R$ is \emph{henselian} if any
polynomial $c_nx^n + \cdots + c_1x + c_0 \in R[x]$ with $c_0 \in \mm$
and $c_1 \notin \mm$ has a root in $\mm$.  A \emph{valuation ring} is an integral domain $\Oo$ such that for every non-zero $x$ in the fraction field $\Frac(\Oo)$, at least one of $x$ or $x^{-1}$ is in $\Oo$.  Valuation rings are always local rings.
\subsection{Henselianity}
The following \emph{henselianity conjecture} is part of the
conjectural classification of NIP fields discussed in \cite{NIP-char, halevi-hasson-jahnke, hhj-v-top}.
\begin{conjecture}[Henselianity conjecture] \label{hens}
  If $\Oo$ is an NIP valuation ring, then $\Oo$ is henselian.
\end{conjecture}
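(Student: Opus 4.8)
Conjecture~\ref{hens} is open; what follows is the strategy behind the known partial results --- dp-minimal fields (Jahnke--Simon--Walsberg) and dp-finite fields (Johnson) --- together with an indication of where it stalls in general.

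The plan is to work in a monster model $\Kk \models \mathrm{Th}(\Kk,\Oo)$ of an NIP theory in which $\Oo$ is a definable, and without loss of generality nontrivial, valuation ring, and to show that $\Oo$ extends uniquely to $\Kk^{\mathrm{sep}}$. First I would pass to the Shelah expansion of $(\Kk,\Oo)$, which is again NIP by Chernikov--Simon, so that every coarsening of $\Oo$, every residue field, and all the relevant henselization data become outright definable. If $\Kk$ is separably closed there is nothing to prove, so assume $\Kk$ is not separably closed; in characteristic $p$ this already forces $\Kk$ to be Artin--Schreier closed (Kaplan--Scanlon--Wagner), which is what one uses to pin down canonical $p$-henselian valuations.

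Now suppose toward a contradiction that $\Oo$ is not henselian. Then some finite Galois extension $L/\Kk$ carries two distinct valuation rings $\Oo_1 \neq \Oo_2$ over $\Oo$, the $\Gal(L/\Kk)$-orbit of $\Oo_1$ being the set of all such extensions. Since $L$ is interpretable in $(\Kk,\Oo)$ and the integral closure of $\Oo$ in $L$ is definable --- an element of $L$ lies in it exactly when the coefficients of its minimal polynomial over $\Kk$, a polynomial of bounded degree, lie in $\Oo$ --- the rings $\Oo_1$ and $\Oo_2$ are definable over parameters. A short valuation-theoretic computation shows that two distinct extensions of $\Oo$ to a finite extension must be incomparable, so on the residue field $k_0$ of their finest common coarsening $\Oo_0$ they induce two \emph{independent} valuations, again definable over parameters, on a field $k_0$ that is interpretable in $(\Kk,\Oo)$ and therefore NIP. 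The next step would be to derive the independence property from this configuration --- heuristically, two independent definable valuations let one encode arbitrary finite binary patterns via the approximation theorem --- or, equivalently, to prove that the definable valuations of an NIP field form a chain under inclusion. This is the crux, and it is precisely here that the proofs for dp-minimal and dp-finite fields invoke finiteness of the dp-rank.

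The hard part --- and the reason the argument does not yet go through for arbitrary NIP fields --- is the absence, outside the dp-finite world, of Johnson's dimension-theoretic machinery: the reduction of the field to a dp-minimal quotient, the extraction of definable infinitesimal subgroups together with a compatible valuation, and the induction on $\dpr$. Without a substitute for these one cannot carry out the induction along the (possibly infinite) chain of coarsenings of $\Oo$ that the reduction above requires, nor can one identify $\Oo$, or a coarsening of it, with the canonical henselian or $p$-henselian valuation whose definability one would otherwise exploit; and making that identification would in any case presuppose the classification of NIP fields of which Conjecture~\ref{hens} forms a part. A proof in full generality therefore seems to require genuinely new model-theoretic input, which is why in this paper Conjecture~\ref{hens} is only ever used as a hypothesis.
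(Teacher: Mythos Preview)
There is nothing to compare here: Conjecture~\ref{hens} is stated in the paper as an open conjecture, not as a theorem, and the paper gives no proof of it. The paper only records the two known special cases (positive characteristic, via \cite{prdf1a}, and dp-finite, via \cite{prdf6}) and then uses the conjecture as a standing hypothesis in later results (e.g.\ Theorems~\ref{mt2} and \ref{mt3}). You correctly identify this situation and explicitly say the conjecture is open and is only used as a hypothesis in the paper.

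Your write-up is therefore not a proof but an expository sketch of the strategy behind the known partial results and of the obstacles in general. As such it is broadly accurate: the reduction to two incomparable definable valuations on an interpretable field, and the need for some rank-type machinery to derive a contradiction, is exactly the shape of the dp-finite argument. One minor caution: the step ``two distinct extensions of $\Oo$ to a finite extension must be incomparable'' is not quite right as stated---distinct extensions to a finite extension are automatically incomparable only after one passes to the residue field of their join, which you do in the next sentence anyway---but since you are not claiming a proof this is harmless. In short, your proposal is appropriate in content and tone for a statement the paper leaves as a conjecture.
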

\noindent Conjecture~\ref{hens} is known to hold in the following cases:
\begin{enumerate}
\item $\Frac(\Oo)$ has characteristic $p > 0$, or equivalently, $\Oo$
  is an $\Ff_p$-algebra \cite[Theorem~2.8]{prdf1a}.
\item $\Oo$ is dp-finite \cite[Corollary~4.16(3)]{prdf6}, meaning that
  the dp-rank of $\Oo$ is finite.
\end{enumerate}
See \cite[Section~4.2]{NIPguide} for information on dp-rank.  Note
that ``NIP'' is equivalent to ``bounded dp-rank'', and ``dp-finite''
is a proper subclass of ``NIP''.  \emph{Dp-minimal} means ``dp-rank 1''.  By \cite[Proposition~2.8(2)]{halevi-delbee}, a valuation ring $\Oo$ has the same
dp-rank as the corresponding valued field, i.e., the structure
$(K,+,\cdot,\Oo)$ where $K = \Frac(\Oo)$.

We
propose the following generalization of Conjecture~\ref{hens}.
\begin{conjecture}[Generalized henselianity conjecture] \label{ghens}
  If $R$ is an NIP ring, then $R$ is a direct product of finitely many
  henselian local rings.  In particular, if $R$ is an NIP integral
  domain, then $R$ is a henselian local ring.
\end{conjecture}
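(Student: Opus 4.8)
The plan is to prove Conjecture~\ref{ghens} in two moves: reduce to the connected case (in particular to integral domains), and then analyse connected NIP rings through the valuation theory of NIP fields, using the known instances of Conjecture~\ref{hens}.

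\emph{Reduction to the connected case.} For any commutative ring $R$, the idempotents $E = \{ e : e^2 = e \}$ form a Boolean algebra under $e \wedge f := ef$, $e \vee f := e + f - ef$, $\neg e := 1 - e$, with bottom $0$ and top $1$; these operations are ring terms, so $E$ with its Boolean structure is interpretable in $R$. An infinite Boolean algebra contains free Boolean subalgebras of every finite rank (equivalently, has arbitrarily large finite antichains of nonzero elements), and then the formula $x \wedge y = x$ shatters the sets $\{ g_1, \dots, g_n \}$ of free generators, the atom $\bigwedge_{i \in S} g_i \wedge \bigwedge_{i \notin S} \neg g_i$ realizing the pattern $S$. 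Since this is witnessed by a ring formula, $R$ would have IP. Hence an NIP ring has only finitely many idempotents, so $R \cong \prod_{i=1}^m R e_i$ for the minimal idempotents $e_i$, where each $R e_i$ has no nontrivial idempotent (i.e.\ $\Spec(R e_i)$ is connected) and is a definable quotient of $R$, hence NIP. A product of two or more nonzero rings carries a nontrivial idempotent and so is not local; thus a product of henselian local rings is henselian local exactly when there is a single factor, and Conjecture~\ref{ghens} becomes: \emph{every connected NIP commutative ring is a henselian local ring}. Integral domains are connected, which yields the ``in particular'' clause.

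\emph{From connected rings to domains.} Next I would reduce a connected NIP ring $R$ to the domain case: pass to $R / \operatorname{Nil}(R)$, split along the minimal primes $\pp_1, \dots, \pp_k$, observe that each $R/\pp_i$ is an NIP domain (hence henselian local, by the domain case) and that $R$ embeds into $\prod_i R/\pp_i$, and then use connectedness to pin down the maximal ideals and conclude that $R$ itself is local and henselian. The obstruction to doing this in full generality is \emph{definability}: neither the nilradical nor the minimal primes need be definable in an arbitrary NIP ring, and they become definable precisely when one has uniform bounds (on nilpotency index, on the number of associated primes), as in the Noetherian and finite-dp-rank settings --- which is one reason those hypotheses appear in the paper's unconditional theorems.

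\emph{The domain case.} Let $R$ be an NIP integral domain, not a field, with $K = \Frac(R)$. Both $K$ and the pair $(K ; R)$ are interpretable in $R$, so $K$ is an NIP field carrying a distinguished proper subring with fraction field $K$. Any nonzero non-unit of $R$ lies in the maximal ideal of some valuation ring $\Oo$ of $K$ with $R \subseteq \Oo \subsetneq K$ (dominate $R$ localized at a maximal ideal containing the element). Two things then need to be shown: (i) \emph{locality}, that $R$ has a unique maximal ideal, and (ii) \emph{henselianity} of that local ring. For (i), the idea is that distinct maximal ideals $\mm_1 \ne \mm_2$ provide independent local data --- a unit at one that is a non-unit at the other, together with the splitting $R/\mm_1 \mm_2 \cong R/\mm_1 \times R/\mm_2$ --- which one wants to leverage, through the interpretable residue rings, into an IP configuration, thereby upgrading the paper's semilocality statements to genuine locality. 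For (ii), one aims to produce a \emph{definable} henselian valuation ring controlling $R$ --- available via the known structure theory in the Noetherian or dp-finite cases, and via Conjecture~\ref{hens} applied to a valuation ring lying over $R$ on $K$ --- and to transfer henselianity down to $R$: a failure of Hensel's lemma for $R$, i.e.\ a nontrivial étale extension, would show up as extra structure in the overlying NIP valued field, contradicting its henselianity. This is where characteristic $0$ of $K$ and the known cases of Conjecture~\ref{hens} (characteristic $p$, dp-finite) do their work.

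\emph{Main obstacle.} The crux is extracting from the NIP structure a definable (henselian) valuation that controls $R$ --- needed both for the henselianity conclusion (ii) and for the full strength of the locality statement (i). This is essentially the content that the conjectural classification of NIP fields is meant to supply, and it is open in general, which is why Conjecture~\ref{ghens} is presently reachable only conditionally on Conjecture~\ref{hens}, or unconditionally under a Noetherian or finite-dp-rank hypothesis; the semilocal, Krull-dimension-$1$, characteristic-$0$ conclusions for NIP Noetherian domains are exactly the visible shadow of (i) and (ii) in that more tractable setting.
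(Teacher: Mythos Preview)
The statement you are attempting to prove is labelled \emph{Conjecture}~\ref{ghens} in the paper, and the paper does not prove it; it proves special cases (Theorems~\ref{mt1}--\ref{mt3}) and gives equivalent reformulations (Proposition~\ref{gh-reform}). You clearly recognize this, since your ``Main obstacle'' paragraph says outright that the crux is open in general. So your proposal is a roadmap, not a proof, and there is no paper proof to compare it against. That said, it is worth comparing your roadmap to the paper's actual machinery, because the paper organizes the reduction more sharply than you do and isolates the obstruction more precisely.

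Your idempotent reduction to connected rings is fine and essentially standard. Where your outline diverges from the paper is the step ``from connected rings to domains.'' You propose to pass to $R/\operatorname{Nil}(R)$, split along minimal primes, and then recover locality and henselianity of $R$ from the domain quotients. Even granting definability and finitely many minimal primes, it is not clear how to pull locality of the $R/\pp_i$ back to $R$: the diagonal embedding $R\hookrightarrow\prod_i R/\pp_i$ does not by itself force $R$ to have a unique maximal ideal. The paper avoids this entirely. Its Proposition~\ref{gh-reform} shows that Conjecture~\ref{ghens} is equivalent to the single statement ``every NIP integral domain is a local ring'' --- no henselianity hypothesis, and no reduction through connected non-reduced rings. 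Henselianity and the product decomposition then come for free from the \emph{good class} machinery (Definition~\ref{good}, Proposition~\ref{blackbox}), which axiomatizes exactly the closure properties needed to run the argument of \cite{fpcase} abstractly.

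In particular, the paper isolates the obstruction as the existence of a \emph{problematic ring} (Definition~\ref{vgood}): an NIP domain with exactly two maximal ideals, both with infinite residue field. Ruling this out is precisely what makes a good class ``very good,'' and Proposition~\ref{blackbox} then delivers the full conclusion. Your sketch of step (i) gestures at two maximal ideals giving ``independent local data'' and an IP configuration, but does not isolate this specific configuration, and does not explain why the case of finite residue fields can be handled separately (in the paper it is absorbed into the argument of \cite[Lemma~3.13]{fpcase}). So your identification of the main obstacle is correct in spirit, but the paper's formulation --- reduce everything to ruling out problematic domains in a good class --- is both cleaner and strictly stronger than your outline.
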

See Proposition~\ref{gh-reform} for some equivalent forms of this
conjecture.  When $R$ is an $\Ff_p$-algebra, Conjecture~\ref{ghens}
holds by \cite[Theorem~3.21]{fpcase}.  In this paper, we verify the
dp-finite case:
\begin{theorem}[{= Theorem~\ref{dft}}] \label{mt1}
  If $R$ is a dp-finite ring, then $R$ satisfies
  Conjecture~\ref{ghens}: $R$ is a direct product of finitely many
  henselian local rings.
\end{theorem}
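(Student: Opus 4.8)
The plan is to strip away the global structure of $R$ and reduce everything to the case of an integral domain, which is the substantial case.

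First I would reduce to the case where $\Spec R$ is connected. Since $R$ is NIP, its Boolean algebra of idempotents is finite (an infinite Boolean algebra would witness the independence property), so writing $1$ as a sum of primitive orthogonal idempotents $e_1 + \cdots + e_n$ gives $R \cong e_1 R \times \cdots \times e_n R$. Each factor $e_i R$ is an interpretable — hence dp-finite — ring with connected spectrum, and a finite product of henselian local rings is precisely a ring of the form sought, so it suffices to show that a dp-finite ring with connected spectrum is henselian local. Next I would reduce to $R$ reduced: dp-finite rings have a definable nilradical (the nilpotency index of nilpotents being bounded), so $\mathrm{Nil}(R) = \{x : x^N = 0\}$ is a definable ideal and $R_{\mathrm{red}} = R/\mathrm{Nil}(R)$ is again dp-finite; since $\Spec R_{\mathrm{red}} = \Spec R$ and a local ring is henselian if and only if its reduction is, it is enough to treat $R$ reduced.

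So suppose $R$ is reduced, dp-finite, with connected spectrum. The plan is to show first that $R$ has only finitely many minimal primes $\pp_1, \dots, \pp_k$; this should follow from NIP constraints on the definable subgroups of $(R,+)$ of the form $\Ann(a)$, since infinitely many minimal primes would produce an infinite "independent" family of such subgroups, contradicting the Baldwin–Saxl condition. With finitely many minimal primes one checks each $\pp_i$ is definable (it is cut out using the interpretable total quotient ring $\prod_i \Frac(D_i)$), so $D_i := R/\pp_i$ is an interpretable dp-finite integral domain and $R \hookrightarrow \prod_{i=1}^k D_i$. Granting the integral-domain case below, each $D_i$ is henselian local; since $D_i$ is local, each $\pp_i$ lies in a unique maximal ideal of $R$, and whenever $V(\pp_i) \cap V(\pp_j) \neq \emptyset$ these maximal ideals coincide — so connectedness of $\Spec R = \bigcup_i V(\pp_i)$ forces $R$ to be local, with a common residue field $\kappa$. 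Henselianity of $R$ then follows by lifting a given coprime factorization $\bar p = \bar q\,\bar r$ over $\kappa$ in each $D_i$ and checking, via uniqueness of Hensel lifts in the common quotients $R/(\pp_i + \pp_j)$ (which are quotients of the henselian $D_i$), that the lifted factors have coefficients in $R$.

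It remains to prove that a dp-finite integral domain $D$ is a henselian local ring, and this is the heart of the matter. Its fraction field $K = \Frac(D)$ is interpretable in $D$, hence dp-finite, so by the structure theory of dp-finite fields $K$ is algebraically closed, real closed, or carries a nontrivial definable henselian valuation; in each case $K$ carries a canonical definable ring topology. The plan is to show that $D$ is a bounded subring for this topology, so that $D$ is local with maximal ideal the set of topologically nilpotent elements of $D$, and that $D$ is henselian — inherited from henselianity of the relevant valuation ring via \cite[Corollary~4.16(3)]{prdf6} in the valued case, and automatic in the algebraically or real closed cases. The main obstacle is exactly this step: controlling the position of $D$ inside $K$ and extracting henselianity, which is where the deep structure theory of dp-finite (valued) fields is needed; by comparison the reductions above and the gluing argument are soft, modulo the standard facts that dp-finite rings have definable nilradical and satisfy Baldwin–Saxl.
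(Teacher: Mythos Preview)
Your proposal diverges from the paper's argument at the decisive point---the integral domain case---and what you offer there is a plan rather than a proof.  You propose to invoke the classification of dp-finite fields to produce a canonical topology on $K = \Frac(D)$ and then ``show that $D$ is a bounded subring''; but you give no mechanism for this, and indeed there is no obvious one.  Even when the pure field $K$ is algebraically closed, the structure $(K,+,\cdot,D)$ need not be strongly minimal, so nothing forces $D$ to sit inside a valuation ring or to be topologically bounded.  The claim that the maximal ideal of $D$ is the set of topologically nilpotent elements presupposes exactly what is at issue, namely that $D$ is local and adapted to the topology.  You correctly identify this as the ``main obstacle'', but the outline does not cross it.

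The paper's route is quite different and avoids the full field classification.  The key lemma (Lemma~\ref{okay}) is that no dp-finite domain is \emph{problematic}, i.e.\ has exactly two maximal ideals with infinite residue fields.  This is proved by a short chain: if both residue fields are infinite then every nonzero subquotient of $R$ is infinite, whence $\br(R) \le \dpr(R) < \infty$ (Lemma~\ref{dprbr}); a finite-breadth domain has all overrings externally definable (Corollary~\ref{overrings}), so Chevalley's extension theorem produces two externally definable incomparable valuation rings on $K$, contradicting the fact (Fact~\ref{df-hens}(2), from \cite{prdf6}) that two definable valuations on a dp-finite field are comparable.  Once problematic rings are excluded, the class of dp-finite rings is ``very good'' and the black-box Proposition~\ref{blackbox} (importing the arguments of \cite{fpcase}) delivers the full conclusion, including the non-domain case, without any separate reduction to connected spectrum, to reduced rings, or any gluing of Hensel lifts.

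Your preliminary reductions also have gaps.  The assertion that dp-finite rings have bounded nilpotency index (hence definable nilradical) is not a standard fact and you give no argument; an ascending chain of annihilator subgroups $\{x : x^n = 0\}$ does not by itself contradict NIP or finite dp-rank.  The gluing step for henselianity is also suspect: uniqueness of Hensel lifts in $D_i$ concerns lifts from the residue field of $D_i$, not from the intermediate quotients $R/(\pp_i + \pp_j)$, so compatibility of the lifts across $i$ does not follow in the way you suggest.  These issues are secondary, though, since the paper's machinery handles the general ring directly once the domain case is settled.
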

We also show that the henselianity conjecture implies
Conjecture~\ref{ghens} in certain cases, namely Theorems~\ref{mt2} and
\ref{mt3} below.
\begin{theorem}[{= Theorem~\ref{xyz}}] \label{mt2}
  Assume the henselianity conjecture.  If $R$ is a Noetherian
  NIP ring, then $R$ satisfies Conjecture~\ref{ghens}: $R$ is a direct
  product of finitely many henselian local rings.
\end{theorem}
We will say more about NIP Noetherian rings in Section~\ref{intro2}
below.  Underlying Theorems~\ref{mt1} and \ref{mt2} is a result on
``$W_n$-rings.''
\begin{definition} \label{wn}
  Fix $n \ge 1$.  A ring $R$ is a \emph{$W_n$-ring} if the following
  condition holds: if $S$ is a finite subset of $R$, then there is a
  subset $S' \subseteq S$ with $|S'| \le n$ such that $S'$ and $S$
  generate the same ideal.  A \emph{$W_n$-domain} is a $W_n$-ring that
  is an integral domain.
\end{definition}
For example, a $W_1$-domain is the same thing as a valuation ring.  We
give several equivalent characterizations of $W_n$-rings in
Section~\ref{sec:breadth}.  Note that our terminology is slightly
different from \cite[Definition~2.5]{prdf5}, where ``$W_n$-ring''
meant ``$W_n$-domain''.\footnote{As an example of the difference, $\Zz/4\Zz$ is a $W_1$-ring but not a $W_1$-domain.}
\begin{theorem}[{= Theorem~\ref{nip-w}}] \label{mt3}
  Assume the henselianity conjecture.  If $R$ is an NIP $W_n$-ring,
  then $R$ satisfies Conjecture~\ref{ghens}: $R$ is a direct product
  of finitely many henselian local rings.
\end{theorem}
The connection between dp-finiteness, Noetherianity, and $W_n$-rings
is given as follows:
\begin{lemma}[{$\subseteq$ Corollary~\ref{hah} $\cup$ Lemma~\ref{dprbr}}]
  Let $R$ be an NIP ring.
  \begin{enumerate}
  \item If $R$ is Noetherian, then $R$ is a $W_n$-ring for some $n$.
  \item If $R$ is dp-finite and $R/\mm$ is infinite for every maximal
    ideal $\mm \subseteq R$, then $R$ is a $W_n$-ring for $n = \dpr(R)$.
  \end{enumerate}
\end{lemma}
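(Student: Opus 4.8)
The whole plan hinges on an algebraic reformulation: for $n\ge 1$, $R$ fails to be a $W_n$-ring exactly when there exist $a_0,\dots,a_n\in R$ with $a_i\notin\sum_{j\ne i}a_jR$ for every $i$ — equivalently, when some finitely generated ideal has an \emph{irredundant} generating set of size $n+1$, meaning one no proper subset of which generates it. (Given a finite $S$ with no generating subset of size $\le n$, pass to a minimal generating subset $T\subseteq S$; then $|T|\ge n+1$, and any $n+1$ elements of $T$ do the job. The converse is clear.) So part (1) asserts that irredundant generating sets of finitely generated ideals have bounded size, and part (2) sharpens the bound to $\dpr(R)$.

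For part (1) the plan is to feed in the structure theory of NIP Noetherian rings. Since $\Spec R$ is the finite union of the sets $\Spec(R/\pp)$ over the finitely many minimal primes $\pp$, and each $R/\pp$ — an NIP Noetherian domain — is a field or a semilocal ring of Krull dimension $1$, the ring $R$ has only finitely many maximal ideals $\mm_1,\dots,\mm_r$ and Krull dimension $\le 1$. Then two facts combine. First, for \emph{any} ring with finitely many maximal ideals one has $\bdn(R)\le\sum_i\bdn(R_{\mm_i})$: from a finite $S\subseteq R$, use the $W$-property at each $R_{\mm_i}$ to extract $S_i\subseteq S$ with $|S_i|\le\bdn(R_{\mm_i})$ and $(S_i)R_{\mm_i}=(S)R_{\mm_i}$, and set $S'=\bigcup_iS_i$; then $(S')$ and $(S)$ agree after localizing at every maximal ideal, hence are equal. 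Second, a Noetherian local ring $(A,\mm_A)$ of Krull dimension $\le 1$ has finite breadth: by Nakayama every generating set of an ideal $J$ contains one of size $\mu(J):=\dim_{A/\mm_A}(J/\mm_A J)$, so $\bdn(A)=\sup_J\mu(J)$, and this supremum is finite in dimension $\le 1$ (a standard consequence of Hilbert--Samuel theory: $n\mapsto\dim\mm_A^n/\mm_A^{n+1}$ is eventually constant, which bounds $\mu(\mm_A^n)$, and from this $\mu(J)$ for all $J$).

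For part (2) the plan is a contradiction via a dp-rank count. Put $d=\dpr(R)$ and suppose $R$ is not a $W_d$-ring, so by the reformulation there are $a_0,\dots,a_d$ forming an irredundant generating set of $I:=\sum_ia_iR$. Because $R$ is dp-finite it has finitely many idempotents, hence is a finite product of connected dp-finite rings; $\bdn$ of a finite product is at most the sum of the factors' values and $\dpr$ of the product is the sum of the factors' values, while the factors' residue fields are still infinite, so it suffices to treat $R$ connected — and a connected dp-finite ring is local. Thus let $R$ be local with maximal ideal $\mm$ and residue field $k$ (infinite); note $I\subseteq\mm$, since otherwise some $a_i$ is a unit and $\{a_0,\dots,a_d\}$ fails to be irredundant. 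The subgroups $I=\sum_ia_iR$ and $\mm I=\sum_ia_i\mm$ are definable ($\mm$ being the definable set of non-units), so $V:=I/\mm I$ is interpretable in $R$; it is a $k$-vector space, and Nakayama applied to irredundancy makes $\bar a_0,\dots,\bar a_d$ a $k$-basis, so $\dim_kV=d+1$. By additivity of dp-rank and $\dpr(k)\ge 1$, $\dpr(V)=(d+1)\dpr(k)\ge d+1$; but $V$ is interpretable in $R$, forcing $\dpr(V)\le d$, a contradiction.

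I expect the structural reductions, not the counting, to be the main obstacle. For part (2) this is the reduction to the local case — that a dp-finite ring splits into a finite product of local rings (equivalently, that a connected dp-finite ring is local). One could instead bypass it by working modulo the Jacobson radical when $R$ is semilocal: $M:=I/(\operatorname{Jac}(R)\,I)$ decomposes as $\bigoplus_{i=1}^r V_i$ with $V_i$ a $k_i$-vector space of dimension $\mu_{\mm_i}(I):=\dim_{k_i} I_{\mm_i}/\mm_i I_{\mm_i}$, a short linear-algebra argument bounds the size of any irredundant generating set of $I$ by $\sum_i\mu_{\mm_i}(I)$, and then $d+1\le\sum_i\mu_{\mm_i}(I)\le\dpr(M)\le d$ gives the same contradiction. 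For part (1) the weight is entirely on the classification of NIP Noetherian rings as semilocal of dimension $\le 1$; granted that, the breadth bound is routine commutative algebra.
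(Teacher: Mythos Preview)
Your argument for part~(1) is essentially the paper's: one first shows that an NIP Noetherian ring is semilocal of Krull dimension $\le 1$ (the paper does this directly for rings via finite width of $\Spec R$, Dilworth, and Noetherian chain conditions; you route through the minimal primes and the domain case, which is fine), and then one shows that any semilocal Noetherian ring of dimension $\le 1$ has finite breadth.  Your localization inequality $\br(R)\le\sum_i\br(R_{\mm_i})$ together with Cohen's theorem on local rings of dimension $\le 1$ is a legitimate alternative to the paper's length argument in Lemma~\ref{noether-mess}/\ref{mess2}; the Hilbert--Samuel sketch is thin but the statement is standard.

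For part~(2), however, your primary route is circular.  You reduce to the local case by asserting that a connected dp-finite ring is local.  In the paper's logical order this is Theorem~\ref{dft}, and Theorem~\ref{dft} is proved \emph{using} Lemma~\ref{dprbr}, which is exactly the statement you are trying to establish (the class of dp-finite rings is shown to be ``very good'' via Lemma~\ref{okay}, whose proof invokes Lemma~\ref{dprbr}).  Your fallback via the Jacobson radical is sound once one knows $R$ is semilocal, and that \emph{can} be obtained non-circularly (NIP implies $\Spec R$ has finite width, and the maximal ideals form an antichain), but you do not supply this step---you phrase it as ``when $R$ is semilocal,'' leaving the hypothesis hanging.

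The paper's proof of part~(2) bypasses all structural reductions.  The key observation you are missing is Lemma~\ref{deflat}: because cyclic submodules are definable, breadth is already witnessed at the level of the lattice of \emph{definable} ideals.  So if $\br(R)\ge n$, one gets definable ideals $I<I'$ with $I'/I$ an internal direct sum of $n$ non-trivial definable pieces; each piece has a simple subquotient $R/\mm$, hence is infinite by hypothesis, hence has dp-rank $\ge 1$; subadditivity of dp-rank gives $\dpr(R)\ge n$.  No Nakayama, no locality, no semilocality.  Your Nakayama argument is really a special case of this: in the local setting $I/\mm I$ is precisely such a definable subquotient, but the general lattice-theoretic characterization of breadth produces the direct-sum subquotient directly without any hypothesis on $R$.
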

\begin{remark}
  Theorems~\ref{mt1}, \ref{mt2}, and \ref{mt3} suggest the following
  natural question: does the henselianity conjecture (Conjecture~\ref{hens}) imply Conjecture~\ref{ghens}?  I could not see how to prove
  this.  A more promising question is whether the \emph{Shelah
  conjecture} implies Conjecture~\ref{ghens}.  The Shelah conjecture
  says that if $K$ is an NIP field, then one of the following holds:
  $K$ is finite, $K$ is algebraically closed, $K$ is real closed, or
  $K$ admits a non-trivial henselian valuation.  Like the henselianity
  conjecture, the Shelah conjecture is part of the conjectural
  classification of NIP fields.  In fact, the Shelah conjecture
  implies a complete classification of NIP fields
  \cite[Theorem~7.1]{NIP-char}.  By work of Halevi, Hasson, and Jahnke
  \cite{hhj-v-top}, the Shelah conjecture implies the henselianity
  conjecture.  Perhaps similar arguments could be used to show that
  the Shelah conjecture implies Conjecture~\ref{ghens}.
  Proposition~\ref{gh-reform} gives some alternate formulations of
  Conjecture~\ref{ghens} which may be useful for this purpose.
\end{remark}
\begin{remark}
  Conjecture~\ref{ghens} is a statement about NIP rings, but it also
  has applications for definable field topologies on NIP fields (see Section~\ref{sec-moved-8.1-start} for the definition of ``definable topology'').  In a
  later paper~\cite{tops-rings}, we will show that
  Conjecture~\ref{ghens} implies the following statement:
  \begin{quote}
    Let $(K,+,\cdot,\ldots)$ be an NIP expansion of a field, and
    $\tau$ be a definable field topology on $K$.  Then $K$ is large in
    the sense of Pop \cite{Pop-little} and $\tau$ is ``generalized
    topologically henselian'' in the sense of
    \cite[Definition~8.1]{hensquot2} (essentially meaning that the
    inverse function theorem holds for polynomial maps).
  \end{quote}
  In particular, this statement holds when
  $\characteristic(K) > 0$ or $K$ is dp-finite.
\end{remark}

\subsection{NIP Noetherian domains} \label{intro2}
Another goal of this paper is to initiate the study of NIP Noetherian
domains.  If we want a classification of NIP Noetherian domains, we
should either work modulo the conjectural classification of NIP fields
(see \cite[Theorem~7.1]{NIP-char}), or focus on cases of low dp-rank
such as dp-minimal and dp-finite rings.

There are several motivations for studying NIP Noetherian domains.
For model theorists, the problem of classifying NIP Noetherian domains
can be seen as a first step beyond classifying NIP fields, towards the
harder problem of classifying NIP integral domains.  Additionally, a
classification of NIP Noetherian domains may provide new examples of
NIP theories.  For commutative algebraists, the class of Noetherian
domains has central importance and one would like to know which
Noetherian domains are amenable to the tools of model theory.

In this paper, we take some first steps towards the classification of
NIP Noetherian domains, especially dp-finite Noetherian domains.  Here
are the main results.  Recall that the \emph{Krull dimension} of a ring $R$ is the supremum of lengths of finite chains of prime ideals
$\pp_0 \subsetneq \pp_1 \subsetneq \cdots \subsetneq \pp_n$ in $R$.
\begin{theorem}[{$\subseteq$ Theorem~\ref{charzero} $\cup$ Corollary~\ref{hah}}]
  Let $R$ be an NIP Noetherian domain.  Suppose $R$ is not a field.
  \begin{enumerate}
  \item $\Frac(R)$ has characteristic 0.
  \item $R$ has finitely many maximal ideals $\mm_1,\ldots,\mm_n$.
  \item $R$ has Krull dimension 1, and so $\{0,\mm_1,\ldots,\mm_n\}$
    are the only prime ideals of $R$.
  \end{enumerate}
\end{theorem}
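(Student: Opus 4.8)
The plan is to establish the three assertions in the order (3), (1), (2): part (3) is used in the proof of (2), whereas (1) is independent. Write $K = \Frac(R)$. The model-theoretic input is limited to two facts: that an NIP Noetherian ring is a $W_n$-ring for some $n$ (the Lemma above), and the theorem of Kaplan, Scanlon, and Wagner that an infinite NIP field of positive characteristic is Artin--Schreier closed. For (3): since $R$ is NIP and Noetherian it is a $W_n$-ring for some $n$, so, being Noetherian, every ideal of $R$ is generated by at most $n$ elements. Fix a maximal ideal $\mm$. For each $k \ge 1$ the ideal $\mm^k$ is generated by at most $n$ elements, hence so is $\mm^k R_\mm$ over $R_\mm$, and by Nakayama's lemma $\dim_{R/\mm}(\mm^k/\mm^{k+1}) = \dim_{R/\mm}(\mm^k R_\mm/\mm^{k+1}R_\mm) \le n$. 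But $k \mapsto \dim_{R/\mm}(\mm^k R_\mm/\mm^{k+1}R_\mm)$ is the Hilbert function of the Noetherian local ring $R_\mm$, which for large $k$ coincides with a polynomial of degree $\dim R_\mm - 1$; a uniform bound forces $\dim R_\mm \le 1$. As this holds for every maximal ideal, $\dim R \le 1$, and in fact $\dim R = 1$ since $R$ is not a field. In particular every nonzero prime of $R$ is maximal, so once (2) is known, the primes of $R$ are exactly $\{0,\mm_1,\dots,\mm_n\}$.

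For (1), suppose $\characteristic(K) = p > 0$. Pick a nonzero nonunit $a \in R$ and a minimal prime $\pp$ over $(a)$; by Krull's principal ideal theorem $\height(\pp) = 1$, so $R_\pp$ is a one-dimensional Noetherian local domain with $\Frac(R_\pp) = K$. By Krull--Akizuki, the integral closure of $R_\pp$ in $K$ is a one-dimensional Noetherian integrally closed domain, i.e.\ a Dedekind domain; localizing it at a maximal ideal yields a discrete valuation ring $\Oo \subseteq K$ whose valuation $v$ surjects onto $\Zz$. Then $K$ is not Artin--Schreier closed: if $x \in K$ satisfied $x^p - x = b$ for some $b$ with $v(b) = -1$, then $v(x) \ge 0$ would force $v(x^p - x) \ge 0$, while $v(x) < 0$ would give $v(x^p) = pv(x) < v(x)$ and hence $v(x^p - x) = pv(x)$; neither can equal $-1$. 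But $K = \Frac(R)$ is interpretable in $R$, hence NIP, and it is infinite (a finite integral domain is a field, so $R$, and thus $K$, is infinite). This contradicts the Kaplan--Scanlon--Wagner theorem, so $\characteristic(K) = 0$.

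For (2), suppose toward a contradiction that $R$ has infinitely many maximal ideals. Given $k \ge 1$, choose distinct maximal ideals $\mm_1,\dots,\mm_k$ (all nonzero, as $R$ is not a field); their product is nonzero because $R$ is a domain, so pick $0 \ne x \in \mm_1 \cap \cdots \cap \mm_k$. Since $\dim R = 1$ and $x \ne 0$, the ring $R/xR$ is Noetherian of Krull dimension $0$, hence Artinian, hence a finite product of local rings indexed by the maximal ideals of $R$ containing $x$; as $\mm_1,\dots,\mm_k$ all contain $x$, this product has at least $k$ factors, so the Boolean algebra of idempotents of $R/xR$ (meet $=$ multiplication, order $e \le f \iff ef = e$) has at least $k$ atoms. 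This Boolean algebra is uniformly interpretable in $R$ with the parameter $x$. Taking representatives $e_1,\dots,e_k \in R$ of $k$ distinct atoms and representatives $b_S \in R$ of $\bigvee_{i \in S}\overline{e_i}$ for $S \subseteq \{1,\dots,k\}$, one has $\overline{e_i} \le \overline{b_S}$ iff $i \in S$; so the ring-language formula $\varphi(a;a',x)$ saying ``$a^2 \equiv a$ and ${a'}^2 \equiv a'$ and $aa' \equiv a$ modulo $x$'' satisfies $\models \varphi(e_i;b_S,x)$ iff $i \in S$. Hence $\varphi$ admits arbitrarily large finite shattering configurations, so it has IP, contradicting NIP of $R$. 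Therefore $R$ has finitely many maximal ideals.

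The commutative algebra here --- Krull's principal ideal theorem, Krull--Akizuki, the Hilbert--Samuel polynomial, the structure of Artinian rings --- is routine, as is the deduction of (1) and (3) from the two cited theorems. I expect the only real obstacle to be part (2): the content is that infinitely many maximal ideals manufacture, inside definable finite quotients of $R$, Boolean algebras of idempotents of unbounded size, which an NIP ring cannot interpret; the care needed is mainly in checking that these Boolean algebras are uniformly interpretable so that a single formula exhibits IP.
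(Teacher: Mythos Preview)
Your proof is correct, but it takes a different route from the paper in each part, and there is one structural oddity worth flagging.

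\textbf{Part (3).} The paper obtains $\dim R\le 1$ and semilocality simultaneously (Corollary~\ref{hah}): it cites the d'Elb\'ee--Halevi fact that $\Spec R$ has finite width in any NIP ring, then uses Dilworth's theorem plus the ACC/DCC on primes in Noetherian rings to get $\Spec R$ finite, and finally Kaplansky's theorem (infinitely many primes between two comparable primes) to force $\dim R\le 1$. Your argument instead goes from finite breadth to a bound on the Hilbert function, which is essentially Cohen's argument (noted in the Remark following Lemma~\ref{mess2}). Both are short; yours is more elementary in that it avoids Dilworth and Kaplansky.

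\textbf{Part (1).} The paper (Theorem~\ref{charzero}) produces a \emph{definable} DVR on $K$ via the $W_n$-machinery (Lemma~\ref{decompose}) plus Krull--Akizuki, then invokes the statement ``no NIP DVR in positive characteristic.'' You instead build a DVR on $K$ by pure commutative algebra (Krull PIT $+$ Krull--Akizuki $+$ localization) and then argue directly that any discrete valuation obstructs Artin--Schreier closedness, contradicting KSW for the NIP field $K$. Your version is cleaner: it needs no definability and no $W_n$-input at all.

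\textbf{Part (2).} The paper simply quotes the d'Elb\'ee--Halevi bound on the width of $\Spec R$. You instead give a self-contained IP argument: using $\dim R=1$ to make $R/xR$ Artinian, you extract arbitrarily large Boolean algebras of idempotents and shatter with a single formula in the parameters $(a',x)$. This is correct and is morally a reproof of the d'Elb\'ee--Halevi result in this special case.

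\textbf{The oddity.} You take as a black box the Lemma ``NIP Noetherian $\Rightarrow$ $W_n$.'' In the paper that Lemma is \emph{itself} established via Corollary~\ref{hah}, which already yields (2) and (3) directly; so within the paper's logical structure your derivation of (3) from the Lemma is redundant rather than independent. Since your argument for (1) does not use the Lemma at all, and your argument for (2) only uses (3), the cleanest way to make your write-up genuinely self-contained would be to replace the appeal to the Lemma in (3) by a direct appeal to the d'Elb\'ee--Halevi finite-width result, or else note that the Lemma and (2)--(3) are really a single package.
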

In the dp-finite case, we get the following trichotomy:
\begin{theorem}[{= Theorem~\ref{tricho1}}]
  Let $R$ be a dp-finite Noetherian domain.  Then $R$ is a local ring
  and one of three things happens:
  \begin{enumerate}
  \item $R$ is not a field.   $\Frac(R)$ and the residue field both have characteristic 0.
  \item $R$ is not a field.  $\Frac(R)$ has characteristic 0 and the
    residue field is finite.
  \item $R$ is a field.
  \end{enumerate}
\end{theorem}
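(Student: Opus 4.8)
The plan is to combine Theorem~\ref{mt1} with the preceding theorem on NIP Noetherian domains and a dp-rank computation for certain finite quotients of $R$.

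Since $R$ is dp-finite it is NIP, and since it is a domain it has no nontrivial idempotents; so Theorem~\ref{mt1} applies and shows that $R$ is a henselian local ring. In particular $R$ is local, with maximal ideal $\mm$ and residue field $k := R/\mm$. If $R$ is a field we are in case~(3). Otherwise the preceding theorem gives that $\Frac(R)$ has characteristic $0$ and that $R$ has Krull dimension~$1$. The only thing left to prove is that $k$ is of characteristic $0$ or finite: for then $\characteristic(k)=0$ puts us in case~(1) and $k$ finite puts us in case~(2), and there will be nothing left to rule out once we know $k$ cannot be infinite of positive characteristic. So I would assume toward a contradiction that $k$ is infinite of characteristic $p>0$.

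Under this assumption $p \in \mm$, while $p \neq 0$ in $R$ (as $\Frac(R)$ has characteristic $0$) and $p$ is a nonzerodivisor (as $R$ is a domain). For each $m \geq 1$, the ideal $p^m R$ is $\emptyset$-definable (it is the image of multiplication by $p^m$), so $A_m := R/p^m R$ is interpretable in $R$, hence dp-finite with $\dpr(A_m) \leq \dpr(R)$. The heart of the matter is to show that, conversely, $\dpr(A_m) \geq m$: this forces $\dpr(R) \geq m$ for all $m$, a contradiction. Toward the lower bound, consider inside $A_m$ the strictly descending chain of $\emptyset$-definable ideals
\[
A_m = p^0 A_m \supsetneq p^1 A_m \supsetneq \cdots \supsetneq p^m A_m = 0
\]
(strictness because $p$ is not a unit), and note that multiplication by $p^i$ induces an isomorphism $R/pR \xrightarrow{\ \sim\ } p^i A_m / p^{i+1} A_m$ for $0 \leq i < m$ (injectivity uses that $p$ is a nonzerodivisor). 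Since $pR \subseteq \mm$, the ring $R/pR$ surjects onto $k$ and is therefore infinite; so we have exhibited a chain of $m$ definable subgroups of $(A_m,+)$ with infinite successive quotients.

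The main obstacle is turning this chain into the inequality $\dpr(A_m) \geq m$. Subadditivity of dp-rank runs the wrong way (it only gives $\dpr(A_m) \leq m\cdot\dpr(k)$); what is needed is that the $m$ ``infinite jumps'' along the chain contribute independently to the dp-rank rather than collapsing to a bounded amount. I would establish this by building, directly inside $A_m$, an array of $m$ mutually indiscernible sequences together with a point witnessing that all $m$ of them are non-indiscernible over it --- placing the $i$-th sequence in a compatible lift of the graded piece $p^i A_m/p^{i+1}A_m$ and exploiting the ring (not merely group) structure of $A_m$ to keep the levels from interfering. An alternative route, closer to the NIP-fields literature, is to pass to a discrete valuation ring $V$ with $R \subseteq V \subseteq \Frac(R)$ dominating $R$ (localize the integral closure of $R$, using that $R$ is one-dimensional and Noetherian); its residue field is a finite extension of $k$, hence still infinite of characteristic $p$, and one invokes the known fact that a dp-finite --- indeed strongly dependent --- henselian valued field of mixed characteristic has finite residue field (the same phenomenon that makes $\Qp^{\mathrm{nr}}$ fail to be strongly dependent). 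The cost of this route is that $V$ need not be interpretable in $R$, so one has to argue separately that dp-finiteness descends to a suitable definable valued-field structure on $\Frac(R)$ refining $V$. Either way, ruling out the infinite characteristic-$p$ residue field completes the trichotomy.
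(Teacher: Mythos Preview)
Your second route (via a dominating DVR) is essentially the paper's approach, and the worry you flag about interpretability is exactly what the paper resolves. Since $R$ is NIP and Noetherian it has finite breadth (Corollary~\ref{hah}), and finite breadth makes the integral closure $\tilde{R}$ outright \emph{definable} in $\Frac(R)$ (Lemma~\ref{decompose}): $x$ is integral over $R$ iff $x$ lies in some overring generated as an $R$-module by at most $\br(R)$ elements, a first-order condition. In the dp-finite case the valuation rings occurring in the decomposition of $\tilde{R}$ are pairwise comparable (Fact~\ref{df-hens}(2)), so $\tilde{R}$ is itself a valuation ring; by Krull--Akizuki it is Noetherian, hence a DVR (Lemma~\ref{where}). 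Fact~\ref{trickle} then applies to the definable, hence dp-finite, DVR $\tilde{R}$: discreteness of the value group rules out the case of an infinite residue field in mixed characteristic. The conclusion transfers back to $R$ because $R \hookrightarrow \tilde{R}$ is local and $\tilde{R}/\tilde{\mm}$ is a finite extension of $R/\mm$ (Proposition~\ref{um}(3)--(5)). So no separate ``descent of dp-finiteness'' argument is needed.

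Your first route, by contrast, has a genuine gap. A chain of $m$ definable subgroups with infinite successive quotients does \emph{not} by itself force $\dpr \ge m$: take $R = \Oo_{\Cc_p}$, the valuation ring of any model of $\ACVF_{0,p}$. This $R$ is dp-minimal, yet $p \in \mm \setminus \{0\}$ and the chain $R \supsetneq pR \supsetneq p^2 R \supsetneq \cdots$ has every quotient $p^i R/p^{i+1}R \cong R/pR$ infinite. Of course $\Oo_{\Cc_p}$ is not Noetherian, but that is the point: your sketch of the step ``$\dpr(A_m)\ge m$'' nowhere uses Noetherianity, so as written it cannot be correct. The vague appeal to ``the ring structure'' does not fill the gap; what is really needed is discreteness of the valuation on $\tilde R$ (so that $[-v(p),v(p)]$ fails to be $p$-divisible in Fact~\ref{trickle}), and that is exactly where Noetherianity enters via Krull--Akizuki. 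In effect, making route~(a) work would amount to reproving Fact~\ref{trickle} internally to $R$, whereas route~(b) simply applies it to the definable DVR $\tilde{R}$.
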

Lastly, we get a classification of dp-minimal Noetherian domains,
building off the known classifications of dp-minimal fields and
dp-minimal valuation rings.  Recall that a structure is \emph{dp-minimal} if it has dp-rank 1.  See \cite[Theorem~1.3]{dpm2} for the
classification of dp-minimal fields, which is too complicated to state
here.  Recall that a \emph{discrete valuation ring} (DVR) is a valuation ring $\Oo$ with fraction field $K$ such that the corresponding value group $K^\times/\Oo^\times$ is isomorphic to $\Zz$.  By \cite[Theorems~1.5--1.6]{dpm2}, dp-minimal DVRs are classified as follows:
\footnote{Recall that a valuation ring $(\Oo,+,\cdot)$ has the same dp-rank as its corresponding valued field $(\Frac(\Oo),+,\cdot,\Oo)$ by \cite[Proposition~2.8(2)]{halevi-delbee}, so the reference to \cite[Theorems~1.5--1.6]{dpm2} is relevant.}
\begin{fact}\label{dvr-class}
  Let $R$ be a DVR.  Then $R$ is dp-minimal iff $R$ is henselian,
  $\Frac(R)$ has characteristic 0, and one of the following holds:
  \begin{enumerate}
  \item The residue field is a dp-minimal field of characteristic 0.
  \item The residue field is finite.
  \end{enumerate}
\end{fact}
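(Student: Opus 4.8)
The plan is to reduce the statement to the model theory of henselian valued fields, and in particular to an Ax--Kochen--Ershov-type computation of dp-rank. Write $K = \Frac(R)$ and $\Oo = R$. The ring $R$ and the valued field $(K,\Oo)$ are mutually interpretable, so they have the same dp-rank; thus $R$ is dp-minimal iff $(K,\Oo)$ is. Two invariants are automatically controlled: the value group of $R$ is $\Zz$, which is dp-minimal as an ordered abelian group, so it never obstructs anything; and the residue field $k = R/\mm$ is interpretable in $R$, so $\dpr(k) \le \dpr(R)$, with $\dpr(k) = 0$ iff $k$ is finite. Moreover, if $R$ is dp-minimal then $K$ must have characteristic $0$: otherwise $K$ is an infinite field of characteristic $p$ with a nontrivial discrete valuation, hence is not Artin--Schreier closed --- taking a uniformizer $t$, no $x \in K$ satisfies $x^p - x = 1/t$, by the standard computation with $\val$ --- so $K$ has IP by Kaplan--Scanlon--Wagner and therefore so does $R$; alternatively, this is the characteristic-$0$ clause of the theorem on NIP Noetherian domains stated above (a DVR is Noetherian and is not a field). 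It remains to treat the equicharacteristic-$0$ and the mixed-characteristic cases.

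For the forward direction, suppose $R$ is dp-minimal. Then $R$ is a dp-finite valuation ring, so it is henselian by \cite[Corollary~4.16(3)]{prdf6}, and $K$ has characteristic $0$ by the previous paragraph. Since $\dpr(k) \le 1$: if $k$ has characteristic $0$ it is infinite, so $\dpr(k) = 1$ and $k$ is a dp-minimal field of characteristic $0$ --- case (1). If $k$ has characteristic $p > 0$, then $R$ has mixed characteristic and is finitely ramified, because its value group is $\Zz$ and $\val(p) > 0$; invoking the additivity of dp-rank for finitely ramified mixed-characteristic henselian valued fields (discussed below), $1 = \dpr(R) = \dpr(\Zz) + \dpr(k) = 1 + \dpr(k)$, forcing $\dpr(k) = 0$ and hence $k$ finite --- case (2).

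For the reverse direction, suppose $R$ is henselian, $K$ has characteristic $0$, and $k$ is either a dp-minimal field of characteristic $0$ or finite. In the first case $(K,\Oo)$ is a henselian equicharacteristic-$0$ valued field whose value group $\Zz$ and residue field $k$ are both dp-minimal, so $(K,\Oo)$ --- equivalently $R$ --- is dp-minimal by the Ax--Kochen--Ershov theorem for dp-minimality in residue characteristic $0$; note that here the dp-ranks of $\Zz$ and $k$ are absorbed into dp-rank $1$ rather than added. In the second case $R$ again has mixed characteristic and is finitely ramified, so by the same additivity $\dpr(R) = \dpr(\Zz) + \dpr(k) = 1 + 0 = 1$, i.e.\ $R$ is dp-minimal; concretely, such a $K$ is elementarily equivalent to a finite extension of $\Qp$, whose dp-minimality is classical.

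The main obstacle --- used in both directions --- is precisely this additivity: $\dpr(K) = \dpr(\Gamma) + \dpr(k)$ for a finitely ramified mixed-characteristic henselian valued field with value group $\Gamma$ and residue field $k$. This is the feature distinguishing mixed characteristic from equicharacteristic $0$, where no such additivity holds (e.g.\ $\Rr((t))$ is dp-minimal although $\Rr$ and $\Zz$ both have dp-rank $1$). The phenomenon is already visible in the finite-length quotients $R/p^n R$: when $R$ is unramified, $R/p^2 R$ is a ring glued from two copies of $k$ by Witt-vector addition, and has dp-rank $2$ as soon as $k$ is infinite. A clean proof of the additivity requires analysing the leading-term structure $K^\times/(1 + \mm)$ and its higher analogues, and is the substantive content of \cite[Theorems~1.5--1.6]{dpm2} specialised to DVRs; the simplest course here is to invoke \cite{dpm2} directly, though one could reconstruct it from Johnson's classification of dp-minimal valued fields.
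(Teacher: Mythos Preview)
The paper does not prove this statement: it is recorded as a Fact, cited directly from \cite[Theorems~1.5--1.6]{dpm2}. Your sketch ultimately defers the substantive content to that same source, so at the level of approach there is no real divergence --- both the paper and you treat this as an external input.

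That said, the intermediate mechanism you lean on --- the ``additivity'' $\dpr(K) = \dpr(\Gamma) + \dpr(k)$ for finitely ramified mixed-characteristic henselian valued fields --- is false as stated, and the paper itself supplies the counterexample. Take $K = \Qp^{un}$: it is henselian and finitely ramified (value group $\Zz$ with $v(p) = 1$), and both $\Gamma = \Zz$ and $k = \Ff_p^{alg}$ have dp-rank $1$; yet $\Qp^{un}$ is not even strongly dependent (see the Remark following Theorem~\ref{tricho1}, which deduces this from Fact~\ref{trickle}), so its dp-rank is certainly not $2$. Hence your step ``$1 = \dpr(R) = 1 + \dpr(k)$, forcing $\dpr(k) = 0$'' in the forward direction is not justified by any valid general principle. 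The clean replacement is Fact~\ref{trickle} itself: for a DVR the value group is $\Zz$, so the interval $[-v(p),v(p)]$ is finite but never $p$-divisible, which rules out cases (2) and (3) of that Fact and forces $k$ finite directly. Since you already cite \cite{dpm2} as the ultimate authority, and since you supply the independent ``elementarily equivalent to a finite extension of $\Qp$'' justification for the backward mixed-characteristic direction, your conclusions survive; but the additivity narrative is wrong and should be dropped rather than presented as the key phenomenon.
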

We call these two cases the ``equicharacteristic'' and ``mixed
characteristic'' cases of dp-minimal DVRs.  If $R$ is an
equicharacteristic dp-minimal DVR, then $R \equiv K[[t]]$ for some
dp-minimal field $K$ of characteristic 0, by the Ax-Kochen-Ershov principle.  If $R$ is a mixed
characteristic dp-minimal DVR, then $R \equiv \Oo_K$, where $K$ is a
non-archimedean local field of characteristic 0 (i.e., a finite
extension of $\Qq_p$), and $\Oo_K$ is its ring of integers.\footnote{Indeed, the valued field $\Frac(R)$ is henselian with finite residue field and value group $\Zz$, so it is $p$-adically closed by \cite[Theorem~3.1]{Prestel-roquette}, hence elementarily equivalent to an extension of $\Qq_p$ by \cite[Fact~3.7]{halevi-hasson-jahnke}.}
\begin{theorem}[{= Theorem~\ref{class}}] \label{1-class}
  The following is a complete list of dp-minimal Noetherian domains:
  \begin{enumerate}
  \item Equicharacteristic dp-minimal DVRs.
  \item Finite index subrings of mixed characteristic dp-minimal DVRs.
  \item Dp-minimal fields.
  \end{enumerate}
  Here, a ``finite index'' subring of $R$ is a subring $R_0 \subseteq
  R$ with finite index in $(R,+)$.
\end{theorem}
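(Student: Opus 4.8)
The plan is to establish the two inclusions separately.

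\textbf{Each listed ring is a dp-minimal Noetherian domain.} Dp-minimal fields are trivially Noetherian domains and dp-minimal. Equicharacteristic dp-minimal DVRs are Noetherian domains by definition and dp-minimal by Fact~\ref{dvr-class}. Now let $R$ be a subring of a mixed characteristic dp-minimal DVR $V$ with $[V:R] < \infty$ as additive groups. Then $nV \subseteq R$ for $n := [V:R]$, and as $n$ is a power of the residue characteristic $p$ times a unit of $V$, we get $\mm_V^N \subseteq R$ for some $N$; thus $\mm_V^N$ is an ideal of both $V$ and $R$, the quotient $V/\mm_V^N$ is finite, and $R$ is a finite union of cosets of the definable ideal $\mm_V^N$, hence a definable subring of $V$. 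The induced (definable) ring structure on the infinite definable set $R$ inside the dp-minimal structure $V$ is again dp-minimal, and $R$ is Noetherian by Eakin--Nagata since $V$ is a finite $R$-module.

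\textbf{Every dp-minimal Noetherian domain $R$ is listed.} If $R$ is a field we are in case~(3). Otherwise $R$ is dp-finite, so by Theorem~\ref{mt1} it is a henselian local ring, hence a henselian local domain with maximal ideal $\mm$; by the two structure theorems quoted above (for NIP Noetherian domains and for dp-finite Noetherian domains), $\dim R = 1$, $\Frac(R)$ has characteristic $0$, and the residue field $k := R/\mm$ --- interpretable in $R$, hence dp-minimal or finite --- is either finite or a dp-minimal field of characteristic $0$. Let $V$ be the integral closure of $R$ in $K := \Frac(R)$. By Krull--Akizuki $V$ is a one-dimensional Noetherian domain, and it is integrally closed; each module-finite $R$-subalgebra of $V$ is a domain that, being module-finite over the henselian local ring $R$, is a finite product of local rings and therefore local; expressing $V$ as the filtered union of these subalgebras (along local maps) shows that $V$ is local and henselian. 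So $V$ is a henselian DVR.

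The crux is that $V$ is a finite $R$-module; grant this for now. Then $V$ is interpretable in $R$ (quotient of a finite free $R$-module by a definable submodule), so $V$ is dp-minimal; $R$ has finite additive index in $V$; and $k_V := V/\mm_V$ is a finite extension of $k$. Since $\Frac(V) = K$ has characteristic $0$, Fact~\ref{dvr-class} says $V$ is either an equicharacteristic dp-minimal DVR (with $k_V$ a dp-minimal field of characteristic $0$) or a mixed characteristic dp-minimal DVR (with $k_V$ finite). If $k$ is finite, so is $k_V$, and $R$ is a finite-index subring of a mixed characteristic dp-minimal DVR: case~(2). If $k$ is infinite, then $k_V$ is an infinite field of characteristic $0$ and $V$ is an equicharacteristic dp-minimal DVR; here $V/R$ is a finite-length $R$-module (killed by the $\mm$-primary conductor $(R:V)$), so its composition factors are all copies of $k$ and $|V/R| \in \{1\} \cup \{\, |k|^m : m \ge 1 \,\}$; as $[V:R]$ is finite but $|k|$ is infinite, $R = V$: case~(1).

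\textbf{The main obstacle} is proving that $V = \tilde R$ is a finite $R$-module, equivalently that the conductor $(R:V)$ is nonzero. This genuinely needs dp-minimality (or at least NIP): there are henselian one-dimensional Noetherian local domains with non-module-finite integral closure. I would approach it via the $\mm$-adic completion $\hat R$: show that $\hat R$ is reduced, indeed a domain --- using that an NIP Noetherian local domain cannot have a pathological completion, which is the essential model-theoretic input and may be isolated as a lemma in the groundwork on Noetherian domains of finite dp-rank --- and then $\hat R$ is a complete local domain, hence excellent and Nagata, so $\widetilde{\hat R}$ is module-finite over $\hat R$; faithfully flat descent along $R \to \hat R$, with a comparison of conductors, transfers this finiteness back to $R$. (Cohen structure theory for $\hat R$, a finite extension of $k[[t]]$ or of $W(k)$, would also re-prove the residue-field dichotomy used above.)
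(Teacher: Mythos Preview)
Your forward direction is fine (Eakin--Nagata is a clean way to get Noetherianity of the subring), and your reduction to the study of the integral closure $V=\tilde R$ is sound.  However, the backward direction has two genuine problems, and the paper's route is quite different from yours.

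\medskip

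\textbf{First problem: the ``main obstacle'' is not resolved.}  You correctly isolate the key difficulty as showing that $V$ is a finite $R$-module, but your proposed attack via $\hat R$ being a domain is speculative: you write that the reducedness of $\hat R$ ``may be isolated as a lemma in the groundwork on Noetherian domains of finite dp-rank,'' but no such lemma exists in the paper, and proving it would itself require a substantial model-theoretic argument.  So this step is a genuine gap, not a routine citation.

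\textbf{Second problem: the infinite residue case is circular.}  Even granting that $V$ is a finite $R$-module, your claim ``$R$ has finite additive index in $V$'' does not follow.  Module-finiteness only gives that $V/R$ has finite \emph{length} as an $R$-module; when $k$ is infinite, the composition factors $k$ are infinite and $|V/R|$ can be infinite (e.g.\ $R=k[[t^2,t^3]]\subset V=k[[t]]$).  Your final sentence in that case (``as $[V:R]$ is finite but $|k|$ is infinite, $R=V$'') therefore uses an unproved and generally false premise.

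\medskip

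\textbf{How the paper handles it.}  The paper avoids the finiteness-of-integral-closure question entirely, using two purely model-theoretic tools specific to dp-minimality:
\begin{itemize}
\item In the infinite residue case, d'Elb\'ee--Halevi (Example~\ref{deH}) gives directly that a dp-minimal local domain with infinite residue field is a valuation ring; so $R$ is already a DVR and there is nothing more to do.
\item In the finite residue case, the paper uses the \emph{canonical topology} on a dp-minimal field (Fact~\ref{cantop}): any two definable non-discrete Hausdorff ring topologies on $K$ coincide, so the $R$-adic and $\tilde R$-adic topologies agree, forcing $R\supseteq a\tilde R$ for some nonzero $a$ (Lemma~\ref{moreover}).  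Since $\tilde R/a\tilde R$ is finite in the mixed-characteristic case, $\tilde R/R$ is finite.
\end{itemize}
So the paper's proof never needs module-finiteness of $\tilde R$ over $R$, nor anything about $\hat R$; the decisive inputs are the d'Elb\'ee--Halevi theorem and the uniqueness of the canonical topology.  Your approach could perhaps be made to work, but it would require proving the non-trivial completion lemma you allude to, and you would still need to repair the infinite-residue argument.
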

In subsequent work with d'Elb\'ee and Halevi~\cite{dEHJ},
Theorem~\ref{1-class} is generalized to a complete classification of
dp-minimal integral domains.

\section{Breadth in modular lattices and modules} \label{sec:breadth}
In this section, we review some facts about breadth of lattices,
modules, and rings, from \cite{prdf1b, prdf3, prdf4, prdf5}.
\begin{remark}
  Breadth was called ``reduced rank'' in \cite{prdf3,prdf4} and earlier
  drafts of \cite{prdf1b} and called ``cube rank'' or ``weight'' in
  \cite{prdf5}.  Yatir Halevi pointed out that ``breadth'' is
  the standard terminology in lattice theory \cite[Exercise~II.5.6]{birkhoff}.  It seems reasonable to
  extend this terminology from lattices to modules and rings.
\end{remark}
In the following, a ``lattice'' means an \emph{unbounded} lattice,
i.e., a partial order $(\Lambda,\le)$ in which every finite
\emph{non-empty} subset has a infimum and supremum.  Unbounded
lattices can be regarded as algebraic structures
$(\Lambda,\wedge,\vee)$ satisfying certain identities~\cite[Section~I.5]{birkhoff}, where $\wedge$
and $\vee$ are the binary infimum and supremum, respectively.  In particular, a ``sublattice'' means a
subset $\Lambda_0 \subseteq \Lambda$ closed under $\wedge$ and $\vee$,
but not necessarily containing $\max(\Lambda)$ or $\min(\Lambda)$ when
they exist.

Recall that a lattice $\Lambda$ is \emph{modular}
\cite[Section~I.7]{birkhoff} if it satisfies the modular identity:
\begin{equation*}
  (x \vee a) \wedge b = (x \wedge b) \vee a \text{ when } a \le
  b. 
\end{equation*}
For any $R$-module $M$, the lattice $\Sub_R(M)$ of $R$-submodules of $M$ is
modular \cite[Theorem VII.1]{birkhoff}.

Let $\Lambda$ be a non-empty modular lattice, such as the lattice
$\Sub_R(M)$.  The \emph{length} of $\Lambda$, written $\ell(\Lambda)$, is the largest $n$ such that there is a chain $x_0 < x_1 < \cdots < x_n$ in $\Lambda$, or $\infty$ if no maximum exists.  The \emph{breadth}
of $\Lambda$, written $\br(\Lambda)$, is an element of
$\{0,1,2,\ldots\} \cup \{\infty\}$ characterized in one of the
following ways:
\begin{enumerate}
\item $\br(\Lambda) \ge n$ if there is a sublattice of $\Lambda$
  isomorphic to the powerset of $n$.
\item $\br(\Lambda) \le n$ if for any $x_1,\ldots,x_{n+1} \in
  \Lambda$, there is some $i$ such that
  \begin{equation*}
    x_1 \vee \cdots \vee x_{n+1} = x_1 \vee \cdots \vee \widehat{x_i}
    \vee \cdots \vee x_{n+1},
  \end{equation*}
  where the hat indicates omission.
\item $\br(\Lambda) \le n$ if for any $x_1,\ldots,x_{n+1} \in
  \Lambda$, there is some $i$ such that
  \begin{equation*}
    x_1 \wedge \cdots \wedge x_{n+1} = x_1 \wedge \cdots \wedge \widehat{x_i}
    \wedge \cdots \wedge x_{n+1}.
  \end{equation*}
\end{enumerate}
These are equivalent by \cite[Lemma~9.9]{prdf1b}.  If $R$ is a ring
and $M$ is an $R$-module, the \emph{length} of $M$, written $\ell(M)$
or $\ell_R(M)$, is the length of the lattice $\Sub_R(M)$.
Equivalently, $\ell(M)$ is the largest $n$ such that there exists a
chain of submodules $M_0 \subsetneq M_1 \subsetneq \cdots \subsetneq
M_n \subseteq M$.  The \emph{breadth} of $M$, written $\br(M)$ or
$\br_R(M)$, is the breadth of the lattice $\Sub_R(M)$.
\begin{remark}\label{module-breadth}
By \cite[Remark~6.7]{prdf3} and
\cite[Proposition~7.3]{prdf4}, we can equivalently characterize breadth as follows:
\begin{enumerate}
\item $\br(M) \ge n$ iff there are submodules $N \le N' \le M$ such
  that $N'/N$ is a direct sum of $n$ non-trivial $R$-modules.
\item $\br(M) \le n$ iff for any $x_1,\ldots,x_{n+1} \in M$, there is
  $i$ such that
  \begin{equation*}
    Rx_1 + \cdots + Rx_{n+1} = Rx_1 + \cdots + \widehat{Rx_i} + \cdots
    + Rx_{n+1}.
  \end{equation*}
\end{enumerate}  
\end{remark}
\begin{fact}[{\cite[Lemma~2.2]{prdf5}}] \label{base}
  If $M$ is an $R$-module and $R'$ is a subring of $R$, then $\br_R(M)
  \le \br_{R'}(M)$.
\end{fact}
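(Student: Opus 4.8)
The plan is to observe that enlarging the ring of scalars can only shrink the submodule lattice, and that breadth is monotone with respect to sublattices. So the whole argument reduces to a lattice-theoretic triviality once one checks that $\Sub_R(M)$ sits inside $\Sub_{R'}(M)$ as a \emph{sub}lattice, not merely as a subset.

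First I would note that every $R$-submodule of $M$ is in particular an $R'$-submodule, so $\Sub_R(M) \subseteq \Sub_{R'}(M)$ as sets. The key point is that this inclusion preserves the lattice operations. The meet of two submodules $N_1, N_2$ is their intersection $N_1 \cap N_2$ in either lattice. The join of $N_1$ and $N_2$ in $\Sub_{R'}(M)$ is the smallest $R'$-submodule containing both, which is the set-theoretic sum $N_1 + N_2$; but when $N_1, N_2$ are $R$-submodules, $N_1 + N_2$ is already an $R$-submodule, hence equals the join of $N_1$ and $N_2$ in $\Sub_R(M)$ as well. Thus $\Sub_R(M)$ is a sublattice of $\Sub_{R'}(M)$. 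This is the only point requiring any care, and it is routine.

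Now I would invoke characterization (1) of breadth: $\br(\Lambda) \ge n$ iff $\Lambda$ has a sublattice isomorphic to the powerset of an $n$-element set. If $\Sub_R(M)$ contains such a sublattice $\Lambda_0$, then $\Lambda_0$ is also a sublattice of $\Sub_{R'}(M)$ (a sublattice of a sublattice is a sublattice), so $\br_{R'}(M) \ge n$. Taking $n = \br_R(M)$ (or letting $n \to \infty$ if $\br_R(M) = \infty$) gives $\br_R(M) = \br(\Sub_R(M)) \le \br(\Sub_{R'}(M)) = \br_{R'}(M)$.

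Alternatively, one can bypass the lattice language and argue directly from characterization (3) at the level of elements: suppose $\br_{R'}(M) \le n$ and let $x_1,\ldots,x_{n+1} \in M$. Choose $i$ with $R'x_1 + \cdots + R'x_{n+1} = R'x_1 + \cdots + \widehat{R'x_i} + \cdots + R'x_{n+1}$. Then $x_i \in \sum_{j \ne i} R'x_j \subseteq \sum_{j \ne i} Rx_j$, so $\sum_{j} Rx_j = \sum_{j \ne i} Rx_j$, witnessing $\br_R(M) \le n$. Either way there is essentially no obstacle; the content is just the observation that $R$-submodules are closed under the operations of $\Sub_{R'}(M)$.
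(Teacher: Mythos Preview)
Your proof is correct. The paper does not supply its own proof of this statement---it is recorded as a Fact with a citation to \cite[Lemma~2.2]{prdf5}---so there is nothing in the paper to compare against. Your argument via the sublattice inclusion $\Sub_R(M) \hookrightarrow \Sub_{R'}(M)$ together with monotonicity of breadth under sublattices is exactly the natural one, and the alternative elementwise argument from characterization (3) is equally valid.
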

\begin{remark} \label{base2}
  Similarly, if $I$ is an ideal in $R$ and $M$ is an $R/I$-module,
  regarded as an $R$-module in the natural way, then $\br_R(M) =
  \br_{R/I}(M)$, because the lattices of submodules are the same:
  \begin{equation*}
    \Sub_R(M) = \Sub_{R/I}(M).
  \end{equation*}
\end{remark}
\begin{remark} \label{vec-br}
  If $V$ is a vector space over a field $K$, then $\br_K(V) = \ell_K(V) =
  \dim_K(V)$.  This follows easily using the definitions, or using the following facts.
\end{remark}
\begin{fact}[{\cite[Proposition~6.9]{prdf3}}] \label{additive}
  Let $R$ be a ring and $0 \to M_1 \to M_2 \to M_3 \to 0$ be a short
  exact sequence of $R$-modules.  Then the following hold:
  \begin{enumerate}
  \item $\br(M_1) \le \br(M_2)$ and $\br(M_3) \le \br(M_2)$.
  \item $\br(M_2) \le \br(M_1) + \br(M_3)$.
  \item If the sequence splits (i.e., $M_2 \cong M_1 \oplus M_3$),
    then $\br(M_2) = \br(M_1) + \br(M_3)$.
  \end{enumerate}
\end{fact}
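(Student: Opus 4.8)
The plan is to deduce all three clauses from the module-level characterizations of breadth recalled above: I will use the ``direct sum of $n$ nontrivial modules'' description for clauses (1) and (3), and the ``redundant generator'' description for the subadditivity in clause (2). Identify $M_1$ with its image in $M_2$ and $M_3$ with $M_2/M_1$, and let $\pi \colon M_2 \twoheadrightarrow M_3$ be the quotient map. If any breadth appearing in the statement is $\infty$ the corresponding assertion is vacuous or reduces to the finite case, so I will assume all breadths are finite and handle the infinite cases at the end.

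Clause (1) is formal. If $N \le N' \le M_1$ witnesses $\br(M_1) \ge n$ (that is, $N'/N$ is a direct sum of $n$ nontrivial modules), then the same inclusions inside $M_2$ witness $\br(M_2) \ge n$; and if $\bar N \le \bar N' \le M_3$ witnesses $\br(M_3) \ge n$, then $\pi^{-1}(\bar N) \le \pi^{-1}(\bar N') \le M_2$ has quotient isomorphic to $\bar N'/\bar N$ and witnesses $\br(M_2) \ge n$. (Equivalently, $\Sub_R(M_1)$ and $\Sub_R(M_3)$ are the intervals $[0,M_1]$ and $[M_1,M_2]$ of the modular lattice $\Sub_R(M_2)$, and an interval of a modular lattice is a sublattice, hence has no larger breadth.)

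Clause (2) is the substantive point. Put $b = \br(M_1)$, $c = \br(M_3)$, and take arbitrary $x_1,\dots,x_{b+c+1} \in M_2$; I must find an index $j_0$ with $x_{j_0} \in \sum_{i \ne j_0} R x_i$. First project to $M_3$: applying the breadth-$\le c$ condition for $M_3$ repeatedly to $\pi(x_1),\dots,\pi(x_{b+c+1})$ and discarding redundant elements, I obtain a subset $I$ with $|I| \le c$ and $\sum_{i \in I} R\,\pi(x_i) = \sum_i R\,\pi(x_i)$. The complement $J$ then has $|J| \ge b+1$, and for each $j \in J$ I may write $\pi(x_j) = \sum_{i\in I} r^{(j)}_i \pi(x_i)$ and set $y_j := x_j - \sum_{i \in I} r^{(j)}_i x_i \in \ker\pi = M_1$. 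Now apply the breadth-$\le b$ condition for $M_1$ to $b+1$ of the $y_j$: some $y_{j_0}$ satisfies $R y_{j_0} \subseteq \sum_{j \in J \setminus \{j_0\}} R y_j$. Since $j_0 \notin I$, each $y_j$ with $j \in J \setminus \{j_0\}$ lies in $\sum_{i \in I} R x_i + R x_j \subseteq \sum_{i \ne j_0} R x_i$, so $y_{j_0} \in \sum_{i \ne j_0} R x_i$, and therefore $x_{j_0} = y_{j_0} + \sum_{i \in I} r^{(j_0)}_i x_i \in \sum_{i\ne j_0} R x_i$. This gives $\br(M_2) \le b + c$. The only delicate point is the index bookkeeping: the argument closes precisely because $j_0$ was chosen in $J$ rather than in $I$, so the correction terms and the witnessing relation for $y_{j_0}$ never involve $x_{j_0}$ itself.

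For clause (3) it remains to prove $\br(M_1 \oplus M_3) \ge \br(M_1) + \br(M_3)$. If $N_1 \le N_1' \le M_1$ and $N_3 \le N_3' \le M_3$ witness $\br(M_1) \ge a$ and $\br(M_3) \ge a'$, then $N_1 \oplus N_3 \le N_1' \oplus N_3' \le M_1 \oplus M_3$ has quotient $(N_1'/N_1) \oplus (N_3'/N_3)$, a direct sum of $a + a'$ nontrivial modules; taking $a,a'$ maximal and combining with clause (2) yields equality. Finally, the infinite cases: if, say, $\br(M_1) = \infty$, then clause (1) already forces $\br(M_2) = \infty$ and both sides of (2) and (3) equal $\infty$; and the inequality in (2) is trivially true when $\br(M_1) + \br(M_3) = \infty$. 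I expect clause (2) — and within it the indexing discipline of the ``push down, correct, pull back'' step — to be the only place requiring genuine care, clauses (1) and (3) being essentially bookkeeping.
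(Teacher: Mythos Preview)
Your proof is correct. Note, however, that the paper does not actually prove this statement: it is recorded as a Fact with an external citation, so there is no in-paper argument to compare against. Your approach---using the subquotient/direct-sum characterization for clauses (1) and (3) and the redundant-generator characterization for the subadditivity in clause (2)---is the natural one, and the bookkeeping in clause (2) (choosing $j_0$ from $J$ rather than $I$ so that neither the correction terms nor the $M_1$-relation involve $x_{j_0}$) is handled cleanly.
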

Fact~\ref{additive} is analogous to the classic Jordan-H\"older
theorem for length:
\begin{fact}[{\cite[Theorem~2.13(a)]{eisenbud}}] \label{JH}
  If $0 \to M_1 \to M_2 \to M_3 \to 0$ is a short exact sequence of
  $R$-modules, then $\ell(M_2) = \ell(M_1) + \ell(M_3)$.
\end{fact}
If $R$ is a ring, the breadth $\br(R)$ is the breadth of $R$ as an
$R$-module, i.e., the breadth of the lattice of ideals.  When $R$ is
an integral domain with fraction field $K$, we have $\br(R) =
\br_R(K)$ by \cite[Lemma~2.4]{prdf5}.  The $W_n$-rings of Definition~\ref{wn} are precisely the rings with $\br(R) \le n$, by Remark~\ref{module-breadth}.  Valuation rings are the same thing as $W_1$-domains
\cite[Proposition~2.6]{prdf5}.

A \emph{multivaluation ring} is a finite intersection of valuation
rings.  Multivaluation rings are examples of $W_n$-domains:
\begin{fact} \label{multival}
  Let $\Oo_1,\ldots,\Oo_n$ be pairwise incomparable non-trivial
  valuation rings on a field $K$, and let $R = \bigcap_{i = 1}^n
  \Oo_i$.
  \begin{enumerate}
  \item $R$ has exactly $n$ maximal ideals $\mm_1,\ldots,\mm_n$, and
    $\Oo_i$ is the localization of $R$ at $\mm_i$.
  \item $R$ has breadth $n$.
  \end{enumerate}
\end{fact}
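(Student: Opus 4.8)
The plan is to prove (1) by elementary valuation theory, the one nontrivial ingredient being the approximation theorem, and then to deduce (2) from (1) together with the characterizations of breadth recalled above.

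Fix valuations $v_1,\dots,v_n$ for $\Oo_1,\dots,\Oo_n$, with value groups $\Gamma_i$, and put $\mm_i:=\mm_{\Oo_i}\cap R$. The tool for (1) is the approximation theorem for finitely many pairwise incomparable valuations on $K$: given $a_1,\dots,a_n\in K$ and $\gamma_i\in\Gamma_i$, there is $x\in K$ with $v_i(x-a_i)\ge\gamma_i$ for all $i$. (For pairwise independent valuations this is the weak approximation theorem; the general, possibly dependent, case follows by inducting on the ranks of the finest common coarsenings of pairs and passing to residue fields, or one invokes the classical literature, e.g.\ Bourbaki or Gilmer.) From it: (a) $\Frac(R)=K$, since for $f\in K^\times$, approximation with $a_i=0$ and $\gamma_i=\max(0,-v_i(f))$ gives $h\in R$ with $hf\in R$, so $f=(hf)/h$; (b) $\Oo_j=R_{\mm_j}$, since $R_{\mm_j}\subseteq\Oo_j$ is immediate, and for $a\in\Oo_j$, approximation with $a_j=1$, $a_k=0$ $(k\ne j)$ and suitable $\gamma$'s produces $s\in R$ with $v_j(s)=0$ (so $s\notin\mm_j$) and $as\in R$, whence $a=(as)/s\in R_{\mm_j}$. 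It follows that the $\mm_j$ are pairwise distinct (otherwise $\Oo_i=R_{\mm_i}=R_{\mm_j}=\Oo_j$, contradicting incomparability), and that they are all the maximal ideals of $R$: since $R^\times=\bigcap_j\Oo_j^\times$, for any $a\in\bigcap_j\mm_j$ the element $1+a$ is a unit (as $v_j(1+a)=0$ for all $j$), so $\bigcap_j\mm_j$ lies in the Jacobson radical and hence every maximal ideal contains $\bigcap_j\mm_j\supseteq\prod_j\mm_j$ and thus, being prime, contains and so equals some $\mm_j$. Finally every prime $\pp$ of $R$ lies in some $\mm_j$, so $R_\pp$ is a localization of the valuation ring $\Oo_j=R_{\mm_j}$ and is therefore a valuation ring; thus $R$ is a semilocal Prüfer domain. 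This establishes (1).

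For (2), the lower bound $\br(R)\ge n$ is the Chinese Remainder Theorem: the $\mm_j$ are pairwise comaximal, so $R/(\mm_1\cap\dots\cap\mm_n)\cong R/\mm_1\oplus\dots\oplus R/\mm_n$ as $R$-modules, a direct sum of $n$ nontrivial $R$-modules, and taking $N=\bigcap_j\mm_j$ and $N'=R$ in the submodule characterization of breadth gives $\br(R)\ge n$. For the upper bound $\br(R)\le n$, I would verify the ideal criterion: for any $x_1,\dots,x_{n+1}\in R$ some $Rx_i$ is redundant in $(x_1,\dots,x_{n+1})$. By (1) the maximal ideals of $R$ are exactly $\mm_1,\dots,\mm_n$ and $R_{\mm_j}=\Oo_j$ is a valuation ring, so for the ideal $I=(x_k:k\ne i)$ one has $I=\bigcap_{j=1}^n I\Oo_j$ (the standard identity $I=\bigcap_\mm IR_\mm$), and $I\Oo_j$ is principal, generated by an $x_k$ of least $v_j$-value; hence $x_i\in I$ iff $v_j(x_i)\ge\min_{k\ne i}v_j(x_k)$ for every $j$. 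Call $i$ \emph{essential} if $x_i$ is the strict minimizer of $v_j$ among $x_1,\dots,x_{n+1}$ for some $j$; since a strict minimizer at a given $v_j$ is unique, distinct essential indices have distinct witnesses $j$, so at most $n$ of the $n+1$ indices are essential. A non-essential $i$ satisfies $v_j(x_i)\ge\min_{k\ne i}v_j(x_k)$ for all $j$, i.e.\ $x_i\in(x_k:k\ne i)$, so $Rx_i$ is redundant. Hence $\br(R)\le n$, and with the lower bound $\br(R)=n$.

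The real work is (1), and within it the two appeals to the approximation theorem for \emph{possibly dependent} pairwise incomparable valuations (used to get $\Frac(R)=K$ and $\Oo_j=R_{\mm_j}$); granting these, the remainder of (1) is elementary localization theory, and (2) is then just the Chinese Remainder computation together with the pigeonhole argument above.
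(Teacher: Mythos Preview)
The paper does not prove this Fact; both parts are cited from earlier papers (\cite{prdf2} for (1), \cite{prdf5} for (2)).  Your argument for (2) is correct and is essentially the natural one.

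For (1) there is a genuine gap.  The approximation theorem as you state it---for pairwise \emph{incomparable} $v_1,\dots,v_n$, given arbitrary $a_i\in K$ and $\gamma_i\in\Gamma_i$ there exists $x$ with $v_i(x-a_i)\ge\gamma_i$---is \emph{false} once the valuations are dependent.  If $v_1,v_2$ are distinct refinements of a common nontrivial coarsening $w$, they are incomparable, yet choosing $\gamma_1,\gamma_2$ above the convex subgroups corresponding to $w$ and targets $a_1=0$, $a_2=1$ would force $w(x)>0$ and $w(x-1)>0$, hence $w(1)>0$.  Unrestricted approximation holds only for pairwise \emph{independent} valuations; for merely incomparable ones the targets must satisfy a compatibility condition relative to the finest common coarsenings (as in Ribenboim or Bourbaki, Alg.\ Comm.\ VI.7).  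Your specific applications can be salvaged---in (b) the hypothesis $a\in\Oo_j$ places $a$ in every valuation ring containing $\Oo_j$, which supplies exactly the needed compatibility---but this requires an argument you have not given, and the inductive sketch you offer cannot prove the general statement since it is false.

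Two smaller points: your direct argument for $\Frac(R)=K$ allows $h=0$ (all constraints $v_i(h)\ge\gamma_i$ are satisfied by $h=0$ when every $a_i=0$); once (b) is established this is unnecessary anyway, since $\Oo_j\subseteq R_{\mm_j}\subseteq\Frac(R)$ and $\Frac(\Oo_j)=K$.  And the step ``contains and so equals some $\mm_j$'' presupposes that each $\mm_j$ is already maximal in $R$, which you have not shown; it follows from another (compatible) approximation producing $s\in R$ with $rs\equiv 1\pmod{\mm_j}$ for any $r\in R\setminus\mm_j$, but deserves a sentence.
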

The first point is well-known; see Lemma~3.2.6 and Theorem~3.2.7 in
\cite{PE}.  The second point is \cite[Lemma~6.5]{prdf3}.  The first
point is related to the approximation theorem for valuation topologies
\cite[Theorem~2.4.1]{PE}, though note we are not assuming the $\Oo_i$ are
independent.

\begin{fact}[{\cite[Proposition~2.12]{prdf5}}] \label{tilde}
  Let $R$ be a $W_n$-domain.  The integral closure $\tilde{R}$ is a
  multivaluation ring $\bigcap_{i = 1}^m \Oo_i$ with $m \le n$.
\end{fact}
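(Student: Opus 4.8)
The plan is to realize $\tilde R$ as the intersection of the \emph{minimal} valuation overrings of $R$ and then to bound their number by $n$ using breadth. A multivaluation ring is a domain, so — matching the setting of the original source — I take $R$ to be a $W_n$-domain and write $K = \Frac(R)$. I would start from the classical fact of valuation theory that the integral closure $\tilde R$ of $R$ in $K$ is the intersection of all valuation rings $V$ with $R \subseteq V \subseteq K$; call such a $V$ a \emph{valuation overring} of $R$.

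Next I would show that every valuation overring of $R$ contains a minimal one. The intersection of a chain of valuation rings of $K$ is again a valuation ring of $K$: if $x \in K^\times$ lies outside the intersection, then it lies outside some member $V$, so $x^{-1} \in V$; for any larger member of the chain $x^{-1}$ lies there too, and for any smaller member $W$ we have $x \notin W$ (as $W \subseteq V$), so again $x^{-1} \in W$; hence $x^{-1}$ lies in the whole intersection. Applying Zorn's lemma to the valuation overrings contained in a fixed one, ordered by reverse inclusion, produces a minimal valuation overring below it. Consequently $\tilde R$ is the intersection of the minimal valuation overrings: these are among all valuation overrings, and conversely every valuation overring contains one of them, so their intersection is unchanged.

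The crucial step is that there are at most $n$ minimal valuation overrings. Suppose there were $n+1$ distinct ones $\Oo_1, \ldots, \Oo_{n+1}$, and put $R' = \bigcap_{i=1}^{n+1} \Oo_i$. By minimality the $\Oo_i$ are pairwise incomparable, and none of them equals $K$: if $\Oo_i = K$ then, for $j \ne i$, the ring $\Oo_j$ is a valuation overring properly contained in $\Oo_i$, contradicting minimality of $\Oo_i$. So the $\Oo_i$ are pairwise incomparable non-trivial valuation rings of $K$, and Fact~\ref{multival} gives $\br(R') = n+1$. On the other hand $R \subseteq R' \subseteq K$, so $R'$ is a domain with $\Frac(R') = K$; using $\br(A) = \br_A(\Frac(A))$ for domains $A$ together with Fact~\ref{base} applied to the subring $R$ of $R'$ and the module $K$, we get
\[
  n + 1 = \br(R') = \br_{R'}(K) \le \br_R(K) = \br(R) \le n,
\]
a contradiction. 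Hence there are $m \le n$ minimal valuation overrings $\Oo_1, \ldots, \Oo_m$, and $\tilde R = \bigcap_{i=1}^m \Oo_i$ is a multivaluation ring with $m \le n$, as desired.

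Modulo the cited facts this is short; the part needing the most care is the reduction to minimal valuation overrings — the Zorn's-lemma argument (resting on intersections of chains of valuation rings being valuation rings) and the observation that the minimal ones already carve out $\tilde R$ — together with the small check that the trivial valuation ring $K$ cannot occur among two or more minimal overrings, which is exactly what licenses the appeal to Fact~\ref{multival}.
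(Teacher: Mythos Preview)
The paper does not give its own proof of this statement; it is imported verbatim as a fact from \cite[Proposition~2.12]{prdf5}, so there is nothing in the present paper to compare against.  Your argument is correct and follows the natural route: realize $\tilde R$ as the intersection of all valuation overrings of $R$, pass to the minimal ones by Zorn's lemma (your check that a chain-intersection of valuation rings of $K$ is again a valuation ring is fine), and then bound their number.  The bounding step---that an overring of a $W_n$-domain is again a $W_n$-domain---is exactly Fact~\ref{whoops} in this paper, which you effectively reprove inline via Fact~\ref{base} and the identity $\br(A) = \br_A(\Frac A)$; combined with Fact~\ref{multival}(2) this excludes $n+1$ pairwise incomparable non-trivial valuation overrings, and your observation that none of the minimal $\Oo_i$ can equal $K$ is precisely what licenses the non-triviality hypothesis there.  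Your reading of ``$W_n$-ring'' as ``$W_n$-domain'' is also the correct one: the paper's footnote after the definition of $W_n$-domain notes that in the source \cite{prdf5} the two terms coincided, and the conclusion only makes sense for a domain.
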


The \emph{width} of a poset $(P,\le)$ is the maximum size of an
antichain.  Dilworth's theorem~\cite{dilworth} says that if $P$ has width $n <
\omega$, then $P$ is a union of $n$ chains.\footnote{Dilworth's
theorem is sometimes stated only for finite posets, but it extends to
infinite posets by propositional compactness or Tychonoff's theorem.}
If $R$ is a ring, let $\Spec R$ denote the poset of prime ideals of
$R$ ordered by inclusion.
\begin{remark} \label{width-rem}
  If $R$ is a ring and $\br(R) \le n$, then the width of $\Spec R$ is
  at most $n$.  Indeed, suppose $\pp_1,\ldots,\pp_{n+1} \in \Spec R$
  are incomparable.  The fact that $\pp_1 \not\supseteq \pp_i$ for $i
  > 1$ implies that $\pp_1 \not\supseteq \bigcap_{i=2}^{n+1} \pp_i$ by
  \cite[Exercise~7.4.11]{dummit-foote}, and so
  \begin{equation*}
    \pp_1 \cap \cdots \cap \pp_{n+1} \ne \pp_2 \cap \cdots \cap
    \pp_{n+1}.
  \end{equation*}
  Similarly,
  \begin{equation*}
    \pp_1 \cap \cdots \cap \pp_{n+1} \ne \pp_1 \cap \cdots \cap
    \widehat{\pp_i} \cap \cdots \cap \pp_{n+1}
  \end{equation*}
  for any $i$.  Therefore the lattice of ideals has breadth at least
  $n+1$, meaning that $\br(R) \ge n+1$.
\end{remark}
\begin{fact} \label{fin-wid}
  If $R$ is an NIP ring, then the width of $\Spec R$ is finite \cite[Proposition~2.1, Remark~2.2]{halevi-delbee}.
\end{fact}

\section{Henselizing classes}
In \cite{fpcase}, a certain argument was used to show that NIP
$\Ff_p$-algebras satisfy Conjecture~\ref{ghens}.  This same argument
can be used in other settings, replacing ``NIP $\Ff_p$-algebras'' with
other classes of NIP rings.  In this section we identify the abstract
conditions needed to run the argument.
Recall that a set $D \subseteq M^n$ in a structure $M$ is
\emph{externally definable} if $D = D' \cap M^n$ for some definable
set $D' \subseteq N^n$ in an elementary extension $N \succeq M$.
Equivalently, $D = \phi(M,b)$ for some formula $\phi(x,y)$ and
parameter $b$ coming from an elementary extension of $M$.  The
intuition is that $D$ is defined using parameters from outside $M$.  Any prime ideal in a ring is externally definable \cite[Fact~4.1]{dEHJ}.
\begin{remark} \label{shelah-rem}
  If $M$ is any structure, the \emph{Shelah expansion} $M^\Sh$ is the
  expansion of $M$ by all externally definable sets.  When $M$ is NIP,
  the definable sets in $M^\Sh$ are exactly the externally definable
  sets in $M$ \cite[Proposition~3.23]{NIPguide}.  From this, one can
  show that $M^\Sh$ is NIP \cite[Corollary~3.24]{NIPguide}.  A similar
  argument shows that $M^\Sh$ has the same dp-rank as $M$~\cite[Fact~3.1]{dEHJ}.  For
  example, if $M$ is dp-finite, then $M^\Sh$ is dp-finite.
\end{remark}
\begin{definition} \label{good}
  A class $\mathcal{K}$ of rings is \emph{pre-henselizing} if it is closed under
  the following conditions:
  \begin{description}
  \item[(Loc)] If $R \in \mathcal{K}$ and $R$ is an integral domain,
    then any localization $S^{-1}R$ is in $\mathcal{K}$.
  \item[(El)] If $R \in \mathcal{K}$ and $R' \equiv
    R$, then $R' \in \mathcal{K}$.
  \item[(Quot)] If $R \in \mathcal{K}$ and $I$ is an externally
    definable ideal in $R$, then $R/I \in \mathcal{K}$.
  \item[(Free)] Let $R$ be in $\mathcal{K}$, and let
    $S$ be an $R$-algebra that is free of finite rank as an
    $R$-module.  Then $S \in \mathcal{K}$.
  \end{description}
\end{definition}
The following classes are pre-henselizing:
\begin{enumerate}
\item $\Ff_p$-algebras:  the five properties are clear.
\item NIP rings: (El) is well-known.  (Loc),
  (Quot), and (Free) hold because the resulting rings are interpretable in
  $R^{\Sh}$.  For localizations, this is \cite[Theorem~2.11]{fpcase}.
\item Dp-finite rings: Similar to NIP rings.
\end{enumerate}
Any intersection of pre-henselizing classes is pre-henselizing.  For example, the class of
NIP $\Ff_p$-algebras is pre-henselizing.
\begin{definition} \label{vgood}
  A \emph{problematic ring} is an integral domain $R$ with exactly two maximal
  ideals $\mm_1, \mm_2$, such that $R/\mm_i$ is infinite for $i =
  1,2$.  A \emph{henselizing} class of rings is a pre-henselizing class
  containing no problematic rings.
\end{definition}
Lemma~3.9 in \cite{fpcase} says that the class of NIP $\Ff_p$-algebras
has no problematic rings, and thus is henselizing.
\begin{proposition} \label{blackbox}
  Let $\mathcal{K}$ be a henselizing class of NIP rings.  Suppose $R \in
  \mathcal{K}$.  Then\ldots
  \begin{enumerate}
  \item $R$ is a finite product of henselian local rings.
  \end{enumerate}
  If additionally $R$ is an integral domain, then\ldots
  \begin{enumerate}[resume]
  \item $R$ is a henselian local ring.
  \item The prime ideals of $R$ are linearly ordered by inclusion.
  \end{enumerate}
\end{proposition}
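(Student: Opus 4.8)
The plan is to run the argument of \cite[Theorem~3.21]{fpcase} (which treats the class of NIP $\Ff_p$-algebras) in the present abstract setting, checking at each step that it invokes only the closure properties (Loc), (El), (Quot), (Free) and the absence of problematic rings. \emph{Reduction to the connected case.} An NIP ring $R$ has only finitely many idempotents: the idempotents of $R$ form a Boolean algebra, and an infinite Boolean algebra contains an infinite family $e_1, e_2, \ldots$ of pairwise orthogonal nonzero idempotents, which the formula $\varphi(x;y)\colon xy = x$ shatters, contradicting NIP. So $1 = e_1 + \cdots + e_k$ for primitive orthogonal idempotents $e_i$, and $R \cong \prod_{i=1}^k e_iR$ with each factor $e_iR$ connected. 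Since $e_iR \cong R/(1-e_i)R$ is a genuinely definable, hence externally definable, quotient of $R$, property (Quot) gives $e_iR \in \mathcal{K}$. Thus for assertion~(1) it is enough to show that a connected ring in $\mathcal{K}$ is a henselian local ring.

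\emph{The local step.} The crux is that a connected $R \in \mathcal{K}$ has a unique maximal ideal. Suppose toward a contradiction that it has two distinct maximal ideals. Following \cite{fpcase}, the strategy is to manufacture a problematic ring inside $\mathcal{K}$, which is impossible since $\mathcal{K}$ is very good. Using that $\Spec R$ is connected, one reduces — by passing to an externally definable quotient, so that (Quot) applies; in \cite{fpcase} this uses that the relevant ideals of the NIP ring are externally definable — to an integral domain $D \in \mathcal{K}$ with at least two maximal ideals. One then localizes $D$ at the complement of the union of two of its maximal ideals: by (Loc) this localization $D'$ is again in $\mathcal{K}$, and by prime avoidance it has exactly two maximal ideals, whose residue fields are residue fields of $D$. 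If both are infinite, $D'$ is problematic and we are done. If one of them is finite, (Free) is used to pass to suitable finite free extensions that eliminate the offending finite residue field. I expect this finite-residue-field subcase, together with the passage from the connected ring to the domain $D$, to be the main obstacle; in the $\Ff_p$-algebra case this is the substance of \cite{fpcase}, and the task here is to confirm that the argument uses only the abstract properties.

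\emph{Henselianity.} Granting that every member of $\mathcal{K}$ is a finite product of local rings, I would upgrade to \emph{henselian} local rings as follows. Let $R \in \mathcal{K}$ be connected, hence local with maximal ideal $\mm$. For every monic $f \in R[x]$, the algebra $R[x]/(f)$ is free of finite rank over $R$, so lies in $\mathcal{K}$ by (Free), and is therefore a finite product of local rings. But a local ring for which $R[x]/(f)$ is a finite product of local rings for every monic $f$ is henselian — this is a standard characterization of henselian local rings: from a coprime monic factorization $\bar f = g_0h_0$ over $R/\mm$, one groups the local factors of $R[x]/(f)$ into $A \times B$ with $A \otimes_R R/\mm \cong (R/\mm)[x]/(g_0)$ and $B \otimes_R R/\mm \cong (R/\mm)[x]/(h_0)$, and since $A$ and $B$ are direct summands of a free module over the local ring $R$ they are free, so the characteristic polynomials of multiplication by $x$ on $A$ and on $B$ furnish monic lifts $g, h$ with $gh = f$. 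This proves~(1).

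\emph{The integral domain case.} If $R \in \mathcal{K}$ is a domain, then by~(1) it is a finite product of henselian local rings, hence, being a domain, a single henselian local ring, which is~(2). For~(3), suppose $\pp_1, \pp_2$ are incomparable primes of $R$. Since $R$ is local, every prime is contained in the maximal ideal $\mm$, and incomparability forces $\pp_1, \pp_2 \subsetneq \mm$, so neither is maximal; hence each $R/\pp_i$ is an integral domain that is not a field, and therefore infinite. The set $S = R \setminus (\pp_1 \cup \pp_2)$ is multiplicative by primality of $\pp_1$ and $\pp_2$, so by (Loc) the localization $R' = S^{-1}R$ lies in $\mathcal{K}$; by prime avoidance its primes are exactly those of $R$ contained in $\pp_1$ or in $\pp_2$, so its maximal ideals are precisely $\pp_1R'$ and $\pp_2R'$, with residue fields $\Frac(R/\pp_1)$ and $\Frac(R/\pp_2)$, both infinite. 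Thus $R'$ is a problematic ring in $\mathcal{K}$, a contradiction, so the primes of $R$ are linearly ordered.
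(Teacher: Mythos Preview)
Both your proposal and the paper's proof amount to ``rerun \cite{fpcase} with $\mathcal{K}$ in place of NIP $\Ff_p$-algebras,'' but the paper preserves the order of \cite{fpcase} while you invert it. In \cite{fpcase} (and the paper's outline) the \emph{domain} case comes first: one shows directly that no domain in $\mathcal{K}$ has two maximal ideals (Lemmas~3.12--3.13 there, using (Loc) and then (Free) to dispose of finite residue fields), deduces that domains in $\mathcal{K}$ have linearly ordered spectrum (Theorem~3.15), and only then treats general $R\in\mathcal{K}$ by quotienting by each prime to see that $\Spec R$ is a forest of finite width (Lemma~3.19), which yields the decomposition into local factors; henselianity of each factor then follows via (Free).

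Your route is legitimate, and parts of it are nicer than what the paper writes: the idempotent-shattering argument is a clean way to get finitely many connected factors, and your henselianity argument via characteristic polynomials of multiplication-by-$x$ on the local summands of $R[x]/(f)$ is correct and more explicit than the paper's bare citation of \cite[Proposition~3.20]{fpcase}. But your ``local step'' is under-justified. The reduction from a connected non-local $R$ to a domain $D\in\mathcal{K}$ with at least two maximal ideals is \emph{not} how \cite{fpcase} proceeds, and it needs an argument: one must check that if every $R/\pp$ for $\pp$ a minimal prime were local, then (using that an NIP ring has only finitely many minimal primes) connectedness of $\Spec R$ would force a unique maximal ideal. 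Once you have $D$ you are running \cite[Lemmas~3.12--3.13]{fpcase} anyway, so it is tidier to do the domain case up front. A related point: your proof of~(3) invokes~(2) to ensure the incomparable $\pp_i$ are non-maximal and hence $R/\pp_i$ is infinite; in the paper's order (3) is established before~(2), and the case of a maximal $\pp_i$ with finite residue field is precisely what \cite[Lemma~3.13]{fpcase} and (Free) handle.
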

\begin{proof}
  The proofs of Lemmas~3.12--3.13, Theorem~3.15, Corollary~3.16,
  Lemma~3.19, Proposition~3.20, and Theorems~3.21--3.22 in
  \cite{fpcase} apply with minimal changes, replacing ``NIP
  $\Ff_p$-algebra'' with ``member of $\mathcal{K}$''.  In more detail,
  if we fix the class $\mathcal{K}$, then the following things are
  true:
  \begin{step}\label{step1}
    Suppose $R \in \mathcal{K}$ is an integral domain, and $\pp_1,
    \pp_2$ are two prime ideals such that $R/\pp_1$ and $R/\pp_2$ are
    infinite.  Then $\pp_1$ and $\pp_2$ are comparable.
  \end{step}
  \begin{stepproof}
    Otherwise, let $S = R \setminus (\pp_1 \cup \pp_2)$ and let $R'$
    be $S^{-1}R$, which belongs to $\mathcal{K}$ by (Loc).  The ring
    $R'$ has exactly two maximal ideals $\pp_1R'$ and $\pp_2R'$, and
    the quotients $R'/\pp_iR'$ are infinite because of the embeddings
    $R/\pp_i \to R'/\pp_iR'$.  Thus $R'$ is problematic, contradicting
    the fact that $\mathcal{K}$ is henselizing.
  \end{stepproof}
  \begin{step} \label{step2}
    $\mathcal{K}$ does not contain a domain $R$ with exactly two
    maximal ideals.
  \end{step}
  \begin{stepproof}
    The proof of \cite[Lemma~3.13]{fpcase} applies, using (El) to make
    $R$ be saturated at the start of the proof.  Step~\ref{step1}
    replaces the use of \cite[Lemma~3.12]{fpcase}.
  \end{stepproof}
  \begin{step} \label{step3}
    If $R \in \mathcal{K}$ is an integral domain, then the prime
    ideals of $R$ are linearly ordered by inclusion.
  \end{step}
  \begin{stepproof}
    Otherwise, take two incomparable prime ideals $\pp_1, \pp_2$ in
    $R$, let $S = \pp_1 \cup \pp_2$, and let $R' = S^{-1}R$.  Then $R'
    \in \mathcal{K}$ by (Loc), and $R'$ has exactly two maximal
    ideals, contradicting Step~\ref{step2}.
  \end{stepproof}
  \begin{step}\label{step3.5}
    If $R \in \mathcal{K}$ and $\pp_1, \pp_2, \qq$ are prime ideals
    with $\pp_1, \pp_2 \supseteq \qq$, then $\pp_1$ and $\pp_2$ are
    comparable.
  \end{step}
  \begin{stepproof}
    Otherwise, $\pp_1/\qq$ and $\pp_2/\qq$ are incomparable prime
    ideals in the domain $R/\qq$.  But $\qq$ is externally definable
    in $R$ by \cite[Fact~4.1]{dEHJ}, so $R/\qq \in
    \mathcal{K}$ by (Quot).  This contradicts Step~\ref{step3}.
  \end{stepproof}
  Recall from the previous section that the width of a poset is the
  maximum size of an antichain in the poset.  As in
  \cite[Definition~3.17]{fpcase}, say that a poset $(P,\le)$ is a
  \emph{forest} if $\{x \in P : x \ge c\}$ is a chain for every $c \in
  P$.    Following~\cite[Definition~3.18]{fpcase}, say that a ring $R$ is ``good'' if the poset of prime ideals in $R$ is a forest of finite width.
  \begin{step}\label{step4}
    If $R \in \mathcal{K}$, then $R$ is good.
  \end{step}
  \begin{stepproof}
    The poset of prime ideals has finite width by Fact~\ref{fin-wid}, because $R$ is NIP.\footnote{This is the only point in the proof where we use the assumption that the rings in $\mathcal{K}$ are NIP.}  It is a forest
    by Step~\ref{step3.5}.
  \end{stepproof}
  \begin{step}\label{step5}
    If $R \in \mathcal{K}$ is a local ring, then $R$ is henselian.
  \end{step}
  \begin{stepproof}
    The proof of \cite[Proposition~3.20]{fpcase} applies, using (Free)
    to see that $\mathcal{K}$ contains the ring
    $R[x_1,\ldots,x_n]/(P_1(x_1),\ldots,P_i(x_i))$ appearing in the
    proof, and using Step~\ref{step4} to conclude that this ring is
    ``good''.
  \end{stepproof}
  \begin{step}\label{step6}
    If $R \in \mathcal{K}$ is arbitrary, then $R$ is a finite
    product of henselian local rings.
  \end{step}
  \begin{stepproof}
    $R$ is ``good'' by Step~\ref{step4}, and therefore a finite
    product of local rings $R = \prod_{i = 1}^n R_i$ by
    \cite[Lemma~3.19(3)]{fpcase}.  The kernel of each projection $R
    \to R_i$ is definable---in fact, generated by an idempotent---and
    so $R_i \in \mathcal{K}$ by (Quot).  Then $R_i$ is henselian by
    Step~\ref{step5}.
  \end{stepproof}
  \begin{step} \label{step7}
    If $R \in \mathcal{K}$ is integral, then $R$ is a henselian local
    ring.
  \end{step}
  \begin{stepproof}
    Clear from Step~\ref{step6}, since $R$ is not a product of more
    than one ring.
  \end{stepproof}
  \noindent This completes the proof of Proposition~\ref{blackbox}.
\end{proof}
As a first application, we mildly strengthen the main theorem of
\cite{fpcase}.  Recall that the characteristic $\characteristic(R)$ of
a ring $R$ is the unique $n \in \Nn$ such that $n\Zz$ is the kernel of
the unique homomorphism $\Zz \to R$.  For example, $\Zz/4\Zz$ has
characteristic 4, though it is not an $\Ff_p$-algebra for any prime
$p$.
\begin{theorem}
  Let $R$ be an NIP ring of positive characteristic.  Then $R$ is a
  finite product of henselian local rings.
\end{theorem}
\begin{proof}
  Let $\mathcal{K}_{\NIP}$ be the class of NIP rings, which is
  pre-henselizing as noted above.  Let $\mathcal{K}_+$ be the class of
  rings of positive characteristic.  It is easy to see that
  $\mathcal{K}_+$ is pre-henselizing.  Then $\mathcal{K}_{\NIP} \cap
  \mathcal{K}_+$, the class of NIP rings of positive characteristic,
  is pre-henselizing.  If $R \in \mathcal{K}_{\NIP} \cap
  \mathcal{K}_+$ is problematic, then $R$ is an integral domain, hence
  $\characteristic(R) = p$ for some prime $p > 0$.  Then $R$ is an
  $\Ff_p$-algebra.  But as noted above, there are no problematic NIP
  $\Ff_p$-algebras, by \cite[Lemma~3.9]{fpcase}.  Therefore,
  $\mathcal{K}_{\NIP} \cap \mathcal{K}_+$ is henselizing, and
  Proposition~\ref{blackbox} applies.
\end{proof}
We can use the machinery of Proposition~\ref{blackbox} to give some
reformulations of Conjecture~\ref{ghens}, the generalized henselianity
conjecture:
\begin{proposition} \label{gh-reform}
  The following are equivalent:
  \begin{enumerate}
  \item Conjecture~\ref{ghens} holds: every NIP ring is a finite
    product of henselian local rings.
  \item Every NIP integral domain is a henselian local ring.
  \item Every NIP integral domain is a local ring.
  \item If $R$ is an NIP integral domain, then the prime ideals of $R$ are linearly ordered.
  \end{enumerate}
\end{proposition}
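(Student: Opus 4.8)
The plan is to prove $(1) \Rightarrow (2) \Rightarrow (3) \Rightarrow (4) \Rightarrow (1)$, with the real content living in the last implication, which is exactly the kind of statement Proposition~\ref{blackbox} is designed to deliver. The implications $(1) \Rightarrow (2)$ and $(2) \Rightarrow (3)$ are immediate: an integral domain that is a finite product of rings must be a single ring (no idempotents), so $(1)$ applied to an NIP domain gives a single henselian local ring, and dropping ``henselian'' gives $(3)$. For $(3) \Rightarrow (4)$: if $R$ is a local NIP domain, I want to argue its prime ideals are linearly ordered. This is not automatic from being local — a local domain can have incomparable primes — so I would reduce to the linear-ordering statement by passing to localizations. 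Given incomparable primes $\pp_1, \pp_2$ in an NIP domain $R$, both $R_{\pp_1}$ and $R_{\pp_2}$ are NIP domains (localizations of NIP domains are interpretable in $R^{\Sh}$, as noted after Definition~\ref{good}), hence local by $(3)$; but $\pp_1 R_{\pp_1}$ and the image of $\pp_2$ would need to be comparable... actually the cleaner route is to observe that $(3)$ applied to all localizations forces the linear ordering directly, or simply to note that $(4)$ for a domain already implies it is local (a domain with two maximal ideals has them incomparable), so $(3)$ and $(4)$ are equivalent for domains once we know localizations stay in the class — I would spell this out carefully.

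The heart is $(4) \Rightarrow (1)$. Suppose every NIP domain has linearly ordered primes; I must show every NIP ring is a finite product of henselian local rings. The strategy is to apply Proposition~\ref{blackbox} to the class $\mathcal{K}$ of all NIP rings, which is good (as recorded in the excerpt). The only thing preventing $\mathcal{K}$ from being \emph{very good} is the possible presence of problematic rings — NIP domains with exactly two maximal ideals, both with infinite residue field. So it suffices to show assumption $(4)$ rules out problematic rings. But a problematic ring is an NIP domain with two incomparable maximal ideals, directly contradicting $(4)$. Hence under $(4)$, $\mathcal{K}$ is very good, and Proposition~\ref{blackbox}(1) says every NIP ring is a finite product of henselian local rings, which is $(1)$.

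The main obstacle — or rather the point requiring the most care — is verifying that the localization and quotient operations used implicitly above genuinely stay inside the class of NIP rings, i.e., that $\mathcal{K} = \{\text{NIP rings}\}$ really satisfies (Loc), (El), (Quot), (Free). The excerpt asserts this (citing \cite[Theorem~2.11]{fpcase} for localizations and noting the others), so I would cite that discussion rather than reprove it. A secondary subtlety is the direction $(3) \Rightarrow (4)$: I should make sure the argument does not secretly assume more than ``local.'' The safest phrasing is to run the cycle as $(1) \Rightarrow (2) \Rightarrow (3) \Rightarrow (4)$ trivially-or-near-trivially and then close with $(4) \Rightarrow (1)$ via Proposition~\ref{blackbox}; for $(3) \Rightarrow (4)$ I would note that if some NIP domain $R$ had incomparable primes $\pp_1 \not\subseteq \pp_2 \not\subseteq \pp_1$, then choosing maximal ideals $\mm_i \supseteq \pp_i$ and localizing appropriately (or just passing to $R/(\pp_1 \cap \pp_2)$ if helpful) produces an NIP domain that is not local, contradicting $(3)$ — the key fact being, again, that these constructions preserve NIP-ness.
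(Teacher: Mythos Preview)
Your use of Proposition~\ref{blackbox} for the main implication is exactly right, and your argument that $(4)$ makes the class of NIP rings very good (hence $(4)\Rightarrow(1)$) is correct. The issue is the step $(3)\Rightarrow(4)$, where the specific moves you propose do not work. If $R$ is an NIP domain with incomparable primes $\pp_1,\pp_2$, then by $(3)$ applied to $R$ itself, $R$ is already local; so ``choosing maximal ideals $\mm_i\supseteq\pp_i$'' gives $\mm_1=\mm_2=\mm$ and no contradiction. Localizing at $\pp_1$ kills $\pp_2$ (since $\pp_2\not\subseteq\pp_1$), so that does not help either. And $R/(\pp_1\cap\pp_2)$ is not a domain, because an intersection of two incomparable primes is never prime; so $(3)$ says nothing about it.

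The paper avoids this difficulty by reorganizing the cycle: it notes that $(4)\Rightarrow(3)$ is trivial (linearly ordered primes means a unique maximal one), and then proves $(3)\Rightarrow(1)$ and $(3)\Rightarrow(4)$ \emph{simultaneously} via Proposition~\ref{blackbox}. The point is that $(3)$ already suffices to rule out problematic rings (a domain with two maximal ideals is not local), so the class of NIP rings is very good under $(3)$ alone; then part~(1) of Proposition~\ref{blackbox} gives $(1)$, and part~(3) gives $(4)$. This is strictly simpler than your route, since you were going to invoke the blackbox anyway for $(4)\Rightarrow(1)$.

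If you want to salvage a direct proof of $(3)\Rightarrow(4)$, the correct localization is at the multiplicative set $S=R\setminus(\pp_1\cup\pp_2)$: by prime avoidance the maximal ideals of $S^{-1}R$ are exactly $\pp_1 S^{-1}R$ and $\pp_2 S^{-1}R$, so $S^{-1}R$ is an NIP domain (by (Loc)) with two maximal ideals, contradicting $(3)$. But note this only works because (Loc) allows \emph{arbitrary} localizations of a domain, not just localizations at primes.
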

\begin{proof}
  $(1)\implies(2)$, $(2)\implies(3)$, and $(4)\implies(3)$ are clear.  Suppose (3) holds.
  Let $\mathcal{K}$ be the class of NIP rings.  As noted above,
  $\mathcal{K}$ is pre-henselizing.  By (3), no NIP integral domain has exactly
  two maximal ideals, which implies that $\mathcal{K}$ is henselizing
  (Definition~\ref{vgood}).  By Proposition~\ref{blackbox}, every ring
  in $\mathcal{K}$ is a finite product of henselian local rings, which
  is (1).  By Proposition~\ref{blackbox} again, the prime ideals in any integral domain in $\mathcal{K}$ are linearly ordered, which is (4).
\end{proof}
\begin{remark}
  The generalized henselianity conjecture is meant to be a
  generalization of Conjecture~\ref{hens}, the henselianity conjecture
  for NIP valuation rings.  Arguably, a more direct analogue of
  Conjecture~\ref{hens} would be the statements:
\begin{enumerate}
  \setcounter{enumi}{4}
\item Every NIP local ring is henselian.
\item Every NIP local integral domain is henselian.
\end{enumerate}
In a later paper~\cite{tops-rings}, we will see that (5)--(6) are
indeed equivalent to (1)--(4) of Proposition~\ref{gh-reform}.
\end{remark}

\section{NIP rings of finite breadth}
If $R$ is a definable ring in some structure, a \emph{definable $R$-module} means a module $M$ such that the following things are definable in the structure:
\begin{itemize}
\item The underlying set of $M$.
\item The addition map $M \times M \to M$.
\item The scalar multiplication map $R \times M \to M$.
\end{itemize}
\begin{lemma} \label{extdef}
  Let $R$ be a definable ring and $M$ be a definable $R$-module in
  some structure.  Suppose $\br_R(M)$ is finite.  Then any
  $R$-submodule $M' \subseteq M$ is externally definable.
\end{lemma}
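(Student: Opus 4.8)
The plan is to realize $M'$ as the trace on $M$ of a single formula evaluated at parameters from a sufficiently saturated elementary extension, with the finiteness of $\br_R(M)$ providing exactly the compactness needed to glue together the finitely generated submodules of $M'$.

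Write $n := \br_R(M)$. The first step is a purely algebraic observation: by repeatedly invoking the third characterization of ``$\br(M) \le n$'' to delete a redundant generator whenever more than $n$ of them remain, one sees that for any finite tuple $a_1,\ldots,a_k$ from $M$, the submodule $Ra_1 + \cdots + Ra_k$ is already generated by at most $n$ of the $a_i$. The second step is to write down the right formula: let $\phi(x;y_1,\ldots,y_n)$ say ``$x \in Ry_1 + \cdots + Ry_n$'', which is a formula of the ambient language since $R$, $M$, and the scalar action are all definable, and note that $\phi(\cdot;\bar b)$ defines the submodule generated by $\bar b$ for every tuple $\bar b \in M^n$.

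Next I would pass to a sufficiently saturated elementary extension $\mathcal{U}$ of the ambient structure and consider the partial type (over the ambient structure) in the variables $\bar y = (y_1,\ldots,y_n)$ given by
\[
p(\bar y) := \{\,\phi(a;\bar y) : a \in M'\,\} \cup \{\,\neg\phi(c;\bar y) : c \in M \setminus M'\,\}.
\]
The crux is that $p$ is finitely satisfiable in the ambient structure. Indeed, given finitely many $a_1,\ldots,a_k \in M'$ and $c_1,\ldots,c_\ell \in M \setminus M'$, the first step supplies $m \le n$ generators $b_1,\ldots,b_m$ of $Ra_1 + \cdots + Ra_k$ chosen among the $a_i$; padding with zeros to length $n$, the resulting tuple $\bar b$ lies in $(M')^n$, so $\phi(\cdot;\bar b)$ defines a submodule of $M'$ containing every $a_i$ and therefore disjoint from $M \setminus M'$, hence from all the $c_j$. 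By saturation $p$ is realized by some $\bar b \in M(\mathcal{U})^n$, and then $\phi(M(\mathcal{U});\bar b) \cap M = M'$; intersecting $\phi$ with ``$x \in M$'' if desired, this exhibits $M'$ as externally definable.

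I do not expect a genuine obstacle. The hypothesis $\br_R(M) < \infty$ is used in exactly one place, verifying finite satisfiability of $p$, and it is used essentially: the uniform bound $n$ on the number of generators needed is precisely what lets a finite chunk of $M'$ sit inside some $\phi(\cdot;\bar b)$ with $\bar b$ drawn from $M'$. The only care required is bookkeeping — padding short generator tuples with zeros, and keeping the extracted generators among the original elements so that they remain inside $M'$.
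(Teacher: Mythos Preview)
Your proof is correct and follows essentially the same approach as the paper: both observe that finite breadth $n$ forces every finitely generated submodule of $M'$ to be generated by at most $n$ elements (hence uniformly definable by the single formula $\phi(x;\bar y)$), and then use compactness to realize $M'$ as the trace of one instance of $\phi$ from an elementary extension. The only difference is packaging: the paper phrases this as ``$M'$ is a directed union of uniformly definable sets, and such unions are externally definable'' (citing a general remark), whereas you unpack that general fact by hand via the finitely satisfiable type $p(\bar y)$.
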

\begin{proof}
  Let $n = \br_R(M) < \infty$.  The submodule $M'$ is a directed union
  of its finitely generated submodules.  Each finitely-generated
  submodule is generated by at most $n$ elements.  In particular,
  finitely generated submodules of $M$ are uniformly definable.  A
  directed union of uniformly definable sets is externally definable
  \cite[Remark~2.9]{fpcase}.
\end{proof}
In this paper, an \emph{overring} of a domain $R$ is a subring of
$\Frac(R)$ containing $R$.  For example, the integral closure of $R$
is an overring of $R$.
\begin{corollary}\label{overrings}
  If $R$ is a $W_n$-domain and $S$ is an overring, then $S$ is
  externally definable as a subset of $\Frac(R)$, in the structure
  $R$.
\end{corollary}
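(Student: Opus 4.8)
The plan is to observe that an overring $S$ is an $R$-submodule of $\Frac(R)$, that $\Frac(R)$ has finite breadth as an $R$-module, and then to quote Lemma~\ref{extdef}.

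Set $K = \Frac(R)$. First I would fix the ambient structure in which to apply Lemma~\ref{extdef}: since $R$ is a domain, $K$ is interpretable in $R$ (say, as the set of formal fractions $a/b$ with $b \neq 0$, modulo the usual equivalence $a/b = c/d \iff ad = bc$), and in this interpretation $R$ is a definable ring and $K$ is a definable $R$-module, with the natural embedding $R \hookrightarrow K$ definable. Equivalently, one works in $R^{\eq}$, or in the two-sorted structure $(R,K)$. Next, I would verify the breadth hypothesis: because $R$ is a $W_n$-domain we have $\br(R) \le n$, and because $R$ is an integral domain with fraction field $K$ we have $\br_R(K) = \br(R)$ (the identity $\br(R) = \br_R(\Frac R)$ recalled above, from \cite[Lemma~2.4]{prdf5}). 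Hence $\br_R(K) \le n < \infty$.

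Now $S$ is, by the definition of overring, a subring of $K$ containing $R$; in particular it is an additive subgroup of $K$ stable under multiplication by elements of $R$, i.e. an $R$-submodule of $K$. By Lemma~\ref{extdef} applied with $M = K$, every $R$-submodule of $K$ is externally definable; taking the submodule to be $S$ gives that $S$ is externally definable as a subset of $K = \Frac(R)$, as desired.

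This is an essentially immediate consequence of Lemma~\ref{extdef}, so there is no serious obstacle; the only point that needs attention is the bookkeeping that puts us in position to apply that lemma — namely, realizing $\Frac(R)$ as a definable sort (interpretable in $R$) carrying a definable $R$-module structure, and recalling that the breadth of $R$ as a ring coincides with the breadth of $\Frac(R)$ as an $R$-module, so that the finite-breadth hypothesis of Lemma~\ref{extdef} is satisfied.
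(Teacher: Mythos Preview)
Your proof is correct and follows essentially the same route as the paper: identify $K=\Frac(R)$ as a definable $R$-module, invoke $\br_R(K)=\br(R)\le n$ from \cite[Lemma~2.4]{prdf5}, observe that an overring $S$ is an $R$-submodule of $K$, and apply Lemma~\ref{extdef}. The only difference is that you spell out the ambient structure (interpreting $K$ in $R$) needed for Lemma~\ref{extdef}, which the paper leaves implicit.
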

\begin{proof}
  If $K = \Frac(R)$, then the breadth of $K$ as an $R$-module equals
  the breadth of $R$ by \cite[Lemma~2.4]{prdf5}.  In particular, $K$
  has finite breadth, so the $R$-submodule $S$ is externally
  definable.
\end{proof}
For example, the integral closure of $R$ is externally definable.  In
fact, the integral closure is genuinely definable:
\begin{lemma} \label{decompose}
  Let $R$ be a $W_n$-domain.  Let $\tilde{R}$ be the integral closure
  of $R$.  Then $\tilde{R} = \bigcap_{i = 1}^m \Oo_i$ for some
  valuation rings $\Oo_i$.  Moreover, $\tilde{R}$ and the $\Oo_i$ are
  definable subsets of $\Frac(R)$, in the structure $R$.
\end{lemma}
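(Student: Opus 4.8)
The plan is to bootstrap from Fact~\ref{tilde}. That fact already gives $\tilde R = \bigcap_{i=1}^m \Oo_i$ with each $\Oo_i$ a valuation ring and $m \le n$, so the first assertion is free; discarding redundant terms we may assume the $\Oo_i$ are pairwise incomparable, and if $\tilde R = \Frac(R)$ there is nothing more to prove, so assume each $\Oo_i$ is non-trivial. By Fact~\ref{multival}(1), $\tilde R$ then has maximal ideals $\mm_1, \dots, \mm_m$ with $\Oo_i = \tilde R_{\mm_i}$. Consequently it suffices to show that $\tilde R$ itself is a definable subset of $K := \Frac(R)$: once this is known, each $\mm_i$ is definable with parameters — picking $t_i \in \mm_i \setminus \bigcup_{j \ne i} \mm_j$ by prime avoidance, one has $\mm_i = \{x \in \tilde R : \lnot\, \exists u, v \in \tilde R\ (ux + vt_i = 1)\}$, since $(x,t_i) = \tilde R \iff x \notin \mm_i$ — and then $\Oo_i = \{y \in K : \exists s \in \tilde R \setminus \mm_i\ (sy \in \tilde R)\}$ is definable, hence so is $\tilde R = \bigcap_i \Oo_i$.

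To define $\tilde R$, I would reduce to proving that $\tilde R$ is a finitely generated $R$-module. Since $\br_R(K) = \br(R) \le n$ by \cite[Lemma~2.4]{prdf5}, every finitely generated $R$-submodule of $K$ is already generated by at most $n$ elements (an $(n{+}1)$-generated module has a redundant generator; iterate), so if $\tilde R$ is finitely generated then $\tilde R = Ra_1 + \cdots + Ra_n$ for some $\bar a \in K^n$, which is the definable set $\{x \in K : \exists \bar r \in R^n\ (x = \sum_j r_j a_j)\}$. Finite generation is moreover first-order: for $S := Ra_1 + \cdots + Ra_n$, the statement ``$S$ is a subring containing $R$ and integrally closed in $K$'' can be written as a formula $\Psi(\bar a)$, using that an element $x \in K$ is integral over $S$ if and only if there is $\bar c \in K^n$ with $1 \in R\bar c$, $a_i c_j \in R\bar c$, and $x c_j \in R\bar c$ for all $i,j$ — the determinant trick gives one direction, and $\br_R(S) \le n$ (so $S[x]$ is $\le n$-generated over $R$ when $x$ is integral over $S$) gives the other. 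Since any such $S$ is automatically module-finite, hence integral, over $R$, we have $S \subseteq \tilde R$ automatically, so $\Psi(\bar a)$ holds precisely when $S = \tilde R$. Thus ``$\tilde R$ is finitely generated'' is equivalent to the first-order sentence $\exists \bar a\ \Psi(\bar a)$; as $W_n$-domains form an elementary class and $\tilde{R^*} \cap K = \tilde R$ for $R \preceq R^*$, it is enough to verify it in a saturated elementary extension.

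The crux, and the step I expect to fight with, is thus the finite generation of $\tilde R$ over a $W_n$-domain $R$. The natural approach is via the conductor $\mathfrak c = \{x \in R : x\tilde R \subseteq R\}$. Each residue field $\tilde R/\mm_i$ is a finite extension of $R/\mm_i$ of degree $\br_R(\tilde R/\mm_i) \le n$, so any $\tilde R$-module of finite length has finite length as an $R$-module; if one can show both that $\mathfrak c \ne 0$ and that $\tilde R/\mathfrak c$ has finite length over $\tilde R$, then $\tilde R/\mathfrak c$ is a finitely generated $R$-module, and since $R/\mathfrak c$ is cyclic and $\mathfrak c \subseteq R$ this forces $\tilde R = \sum_j R g_j + \mathfrak c$ to be a finitely generated $R$-module. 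The delicate points are exactly these two claims: $\tilde R$ need not be Noetherian and the $\Oo_i$ need not be discrete, so finite length of $\tilde R/\mathfrak c$ does not come for free from $\mathfrak c \ne 0$, and both would have to be squeezed out of $\br(R) \le n$ together with the multivaluation structure of Fact~\ref{tilde} — perhaps by a closer analysis of the finitely generated $R$-subalgebras $R[y_1,\dots,y_k]$ exhausting $\tilde R$ and how far $\br(R) \le n$ lets them grow. Everything outside this one point — the reductions above and the deduction of definability of the $\Oo_i$ from that of $\tilde R$ — is routine.
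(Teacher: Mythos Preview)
Your reduction from the definability of $\tilde R$ to that of the $\Oo_i$, via the maximal ideals of $\tilde R$ and localization, is correct and parallels the paper (which cites \cite[Corollary~2.4]{fpcase} for the definability of the maximal ideals but is otherwise the same). The decomposition $\tilde R = \bigcap_i \Oo_i$ from Fact~\ref{tilde} is likewise used identically.

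The gap you flag---finite generation of $\tilde R$ over $R$---is real, and the paper simply sidesteps it. You are trying to exhibit $\tilde R$ as a \emph{single} $R$-module of the form $Ra_1+\cdots+Ra_n$. The paper instead defines $\tilde R$ element by element: for $x\in K$, one has $x$ integral over $R$ iff $x$ lies in some overring $S\subseteq K$ that is finitely generated as an $R$-module, iff (since $\br_R(K)\le n$ by \cite[Lemma~2.4]{prdf5}) $x$ lies in some overring of the form $Ry_1+\cdots+Ry_n$. The last condition is first-order in $x$, so $\tilde R$ is definable outright, with no global finiteness statement about $\tilde R$ required.

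You already have this argument in hand. Your determinant-trick criterion ``$\exists\,\bar c\in K^n$ with $1\in R\bar c$ and $xc_j\in R\bar c$ for all $j$'' (the $S=R$ case of your integrality test, with the $a_i$ dropped) is precisely a first-order definition of $\{x\in K : x\text{ integral over }R\}=\tilde R$, by exactly the justification you give. Applying it directly eliminates the conductor, finite-length, and saturation detours. Whether $\tilde R$ is always module-finite over a $W_n$-domain may be interesting in its own right, but it is not needed here.
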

\begin{proof}
  Fact~\ref{tilde} gives the decomposition of $\tilde{R}$ as an
  intersection of valuation rings.  For definability of $\tilde{R}$,
  note that the following are equivalent for $x \in \Frac(R)$:
  \begin{enumerate}
  \item $x$ is integral over $R$.
  \item There is an overring $R \subseteq S \subseteq \Frac(R)$ such
    that $x \in S$ and $S$ is finitely-generated as an $R$-module.
  \item There is an overring $R \subseteq S \subseteq \Frac(R)$ such
    that $x \in S$ and $S$ has the form $Ry_1 + \cdots Ry_\ell$ for some
    $y_1,\ldots,y_\ell \in \Frac(R)$, where $\ell = \br(R)$.
  \end{enumerate}
  The equivalence of (1) and (2) is well-known; see
  \cite[Corollary~4.6]{eisenbud} for example.  The equivalence of (2)
  and (3) holds because the breadth of $\Frac(R)$ as an $R$-module is
  at most $n$ by \cite[Lemma~2.4]{prdf5}.  Condition (3) is a
  definable property.  This shows that $\tilde{R}$ is definable.  By
  Fact~\ref{multival}, the $\Oo_i$ are the localizations of
  $\tilde{R}$ at its maximal ideals.  The maximal ideals of
  $\tilde{R}$ are definable in $\tilde{R}$ by
  \cite[Corollary~2.4]{fpcase}, and so each $\Oo_i$ is definable.
\end{proof}
\begin{fact}[{\cite[Lemma~2.7]{prdf5}}] \label{whoops}
  If $R$ is a $W_n$-domain and $S$ is an overring
  then $S$ is a $W_n$-domain.
\end{fact}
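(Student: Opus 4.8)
The plan is to reduce the statement to the monotonicity of breadth under enlarging the ring of scalars, which is exactly Fact~\ref{base}. First I would record two trivial facts about $S$: it is an integral domain, being a subring of the field $K := \Frac(R)$; and $\Frac(S) = K$, since the inclusions $R \subseteq S \subseteq K$ force $K = \Frac(R) \subseteq \Frac(S) \subseteq K$. So the task is precisely to show $\br(S) \le n$.

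Next I would translate $\br(R)$ and $\br(S)$ into breadths of the module $K$. By \cite[Lemma~2.4]{prdf5}, since $R$ and $S$ are both integral domains with fraction field $K$, we have $\br(R) = \br_R(K)$ and $\br(S) = \br_S(K)$; the hypothesis is thus $\br_R(K) \le n$. Now apply Fact~\ref{base} to the $S$-module $M := K$ and the subring $R \subseteq S$: it yields $\br_S(K) \le \br_R(K) \le n$, hence $\br(S) = \br_S(K) \le n$, which is what we want.

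There is no serious obstacle here; the proof is essentially two citations. The only point requiring care is that Fact~\ref{base} is being used in the direction ``a fixed module has \emph{smaller} breadth over a \emph{larger} scalar ring,'' which is the correct direction since $S \supseteq R$. If one prefers a self-contained argument, one can bypass Fact~\ref{base} and work directly from the finitely-generated-submodule characterization of breadth: given $x_1,\ldots,x_{n+1} \in K$, the hypothesis $\br_R(K) \le n$ produces an index $i$ with $x_i \in \sum_{j \neq i} R x_j \subseteq \sum_{j \neq i} S x_j$, so $\sum_j S x_j = \sum_{j \neq i} S x_j$; since $x_1,\ldots,x_{n+1}$ were arbitrary, $\br_S(K) \le n$.
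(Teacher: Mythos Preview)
Your proof is correct. Note, however, that the paper does not actually prove this statement: it is recorded as a Fact with a citation to \cite[Lemma~2.7]{prdf5} and no proof environment follows. So there is no ``paper's own proof'' to compare against here. Your argument---reducing to $\br_S(K) \le \br_R(K)$ via \cite[Lemma~2.4]{prdf5} and Fact~\ref{base}---is the natural one and is almost certainly what the cited reference does as well, since both ingredients come from that same source.
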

\begin{remark} \label{closed2}
  Let $R$ be a ring and $I$ be an ideal.  Then $\br(R/I) \le \br(R)$,
  as the lattice of ideals in $R/I$ is a sublattice of the lattice of
  ideals in $R$.
\end{remark}
\begin{remark} \label{closed1}
  Let $R$ be a ring and $S$ be an $R$-algebra such that $S \cong R^d$
  as $R$-modules, for some finite $d$.  If $R$ has finite breadth, so
  does $S$, because
  \begin{equation*}
    \br(S) = \br_S(S) \le \br_R(S) = \br_R(R^d) = d \cdot \br_R(R) =
    d \br(R) < \infty
  \end{equation*}
  using Facts~\ref{base} and \ref{additive}.
\end{remark}
\begin{lemma}
  Let $\mathcal{K}$ be the class of rings of finite breadth.  Then
  $\mathcal{K}$ is a pre-henselizing class in the sense of Definition~\ref{good}.
\end{lemma}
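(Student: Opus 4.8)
The plan is to verify the four closure conditions of Definition~\ref{good} in turn; each follows quickly from material already recorded above. For \textbf{(Loc)}, suppose $R \in \mathcal{K}$ is an integral domain and $S^{-1}R$ is a localization. Since $S^{-1}R$ is a subring of $\Frac(R)$ containing $R$ (the degenerate case $0 \in S$, giving the zero ring, being trivial), it is an overring of $R$. Writing $n = \br(R) < \infty$, Fact~\ref{whoops} gives that $S^{-1}R$ is again a $W_n$-domain, so $S^{-1}R \in \mathcal{K}$.

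For \textbf{(El)}, the key observation is that for each fixed $n$ the condition $\br(R) \le n$ is expressible by a single first-order sentence in the language of rings. Indeed, using the third characterization of module breadth applied to $R$ as a module over itself, $\br(R) \le n$ says: for all $x_1,\ldots,x_{n+1}$ there is some $i$ with $Rx_1 + \cdots + Rx_{n+1} = Rx_1 + \cdots + \widehat{Rx_i} + \cdots + Rx_{n+1}$, and the nontrivial inclusion amounts to $x_i \in \sum_{j \ne i} Rx_j$, which is an existential statement over the coefficients. Thus $\br(R) \le n$ is a $\forall\exists$ sentence, hence preserved under elementary equivalence: if $\br(R) \le n$ and $R' \equiv R$ then $\br(R') \le n < \infty$, so $R' \in \mathcal{K}$.

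For \textbf{(Quot)}, Remark~\ref{closed2} already gives $\br(R/I) \le \br(R) < \infty$ for \emph{any} ideal $I$, so in particular for an externally definable one; hence $R/I \in \mathcal{K}$ (the external definability hypothesis is not even used). For \textbf{(Free)}, if $S$ is an $R$-algebra free of finite rank $d$ as an $R$-module, then $S \cong R^d$ as $R$-modules, and Remark~\ref{closed1} gives $\br(S) < \infty$, so $S \in \mathcal{K}$. There is no real obstacle here: all four parts are immediate from Fact~\ref{whoops}, the first-order nature of bounded breadth, and Remarks~\ref{closed2} and~\ref{closed1}; the only point requiring a moment's thought is the first-order expressibility invoked for (El).
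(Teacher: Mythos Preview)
Your proof is correct and follows the same route as the paper: (Loc) via Fact~\ref{whoops}, (El) via the first-order expressibility of $\br(R) \le n$, (Quot) via Remark~\ref{closed2}, and (Free) via Remark~\ref{closed1}. You simply spell out in more detail the first-order sentence witnessing (El), which the paper leaves implicit.
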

\begin{proof}
  (Loc) holds by Fact~\ref{whoops}.  (El) is easy,
  as $\br(R) \le n$ is expressed by a first-order sentence.
  (Quot) holds by Remark~\ref{closed2}, and (Free)
  holds by Remark~\ref{closed1}.
\end{proof}
For the rest of the section, assume the henselianity conjecture (Conjecture~\ref{hens}).
\begin{fact}[Assuming HC] \label{henscor}
  Let $\Oo_1, \Oo_2$ be two valuation rings on a field $K$.  If the
  structure $(K,\Oo_1,\Oo_2)$ is NIP, then $\Oo_1$ and $\Oo_2$ are
  comparable.
\end{fact}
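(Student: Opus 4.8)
The plan is to argue by contradiction and, using the henselianity conjecture, reduce to the statement that a separably closed field cannot carry two independent non-trivial valuations inside an NIP structure.

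Suppose $\Oo_1$ and $\Oo_2$ are incomparable. Then both are non-trivial (a trivial valuation ring is the whole field, which is comparable to everything), and since each $(K,\Oo_i)$ is a reduct of the NIP structure $(K,\Oo_1,\Oo_2)$ it is NIP; hence $\Oo_i$ is henselian by Conjecture~\ref{hens}. I would then reduce to the case that $\Oo_1,\Oo_2$ are \emph{independent}, i.e. that their finest common coarsening $\Oo := \Oo_1\cdot\Oo_2$ is all of $K$. For this, observe that $R := \Oo_1\cap\Oo_2$ is a multivaluation ring with $\Frac(R)=K$, hence a $W_2$-domain by Fact~\ref{multival}, so the overring $\Oo$ of $R$ is externally definable in $(K,\Oo_1,\Oo_2)$ by Corollary~\ref{overrings}. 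Passing to the Shelah expansion, still NIP by Remark~\ref{shelah-rem}, we may treat $\Oo$ as definable; then $\mm_\Oo$ is definable, the residue field $\bar K := \Oo/\mm_\Oo$ with the induced valuation rings $\bar\Oo_i := \Oo_i/\mm_\Oo$ is interpretable, and so $(\bar K,\bar\Oo_1,\bar\Oo_2)$ is NIP. One checks that $\bar\Oo_1,\bar\Oo_2$ are still incomparable (their preimages in $\Oo$ recover $\Oo_1,\Oo_2$), hence non-trivial; that they are henselian, since passing to the residue field of a coarsening of a henselian valuation preserves henselianity; and that they are independent, since $\Oo$ is the finest common coarsening and therefore $\bar\Oo_1\cdot\bar\Oo_2=\Oo/\mm_\Oo=\bar K$. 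Replacing $(K,\Oo_1,\Oo_2)$ by $(\bar K,\bar\Oo_1,\bar\Oo_2)$, we may assume $\Oo_1$ and $\Oo_2$ are independent.

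By the theorem of F.~K.~Schmidt, a field carrying two independent non-trivial henselian valuations is separably closed, so $K$ is separably closed. It then remains to contradict the existence of an NIP structure which is a separably closed field equipped with two independent non-trivial valuations. I would rule this out by showing, or citing, that such a structure has the independence property --- using the approximation theorem for independent valuations, or appealing to the known analysis of definable valuations on NIP fields. This contradiction finishes the argument.

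I expect this last step to be the main obstacle; the preceding reductions are bookkeeping, the only mild subtlety being the external-definability argument for the common coarsening. The henselianity conjecture is used precisely in order to bring F.~K.~Schmidt's theorem to bear, and once the ambient field is separably closed the remaining work is to exclude a second, independent valuation, which is where the real content lies.
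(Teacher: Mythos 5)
The paper does not prove this statement at all: it is recorded as a Fact with a pointer to \cite[Corollary~5.6]{hhj-v-top}, where it is in fact shown to be \emph{equivalent} to the henselianity conjecture. Your chain of reductions is essentially the one used there, and it is sound: incomparable $\Oo_1,\Oo_2$ are henselian by HC; passing to the residue field of the finest common coarsening $\Oo=\Oo_1\cdot\Oo_2$ preserves NIP, henselianity, and incomparability and makes the two valuations independent; and F.~K.~Schmidt then forces the residue field to be separably closed. One small simplification: you do not need the external-definability detour through $R=\Oo_1\cap\Oo_2$ and the Shelah expansion, since $\Oo_1\cdot\Oo_2=\{xy: x\in\Oo_1,\ y\in\Oo_2\}$ is outright definable in $(K,\Oo_1,\Oo_2)$ (this is exactly how the paper handles the analogous reduction in the proof of Fact~\ref{df-hens}(2)).

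The genuine gap is the step you yourself flag as ``where the real content lies.'' The claim that an NIP field cannot carry two independent non-trivial definable valuations is not a consequence of the approximation theorem, and it is not known unconditionally (if it were, comparability --- and hence HC itself --- would be a theorem rather than equivalent to a conjecture). What is available is the main theorem of \cite{hhj-v-top}: an NIP field admits at most one definable V-topology under a (t-)henselianity hypothesis, equivalently, a definable henselian valuation on an NIP field is comparable to every other definable valuation. Your reduction does place you within its scope --- on a separably closed field every valuation is automatically henselian, so both induced V-topologies are t-henselian --- but that theorem is a substantial piece of work (it is the entire point of \cite{hhj-v-top}), not something one ``shows'' from weak approximation. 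So the proposal correctly reconstructs the bookkeeping surrounding the result but leaves its actual mathematical content as an unproved black box; to close the argument you must either cite \cite{hhj-v-top} explicitly (at which point the Fact follows in one line from their main theorem, with no need for your reductions) or reproduce their proof.
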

In fact, Fact~\ref{henscor} is \emph{equivalent} to the henselianity
conjecture, by \cite[Corollary~5.6]{hhj-v-top}.
\begin{lemma}[Assuming HC] \label{comparability}
  Let $R$ be an NIP $W_n$-domain.  Then $\Spec R$ is a chain.
\end{lemma}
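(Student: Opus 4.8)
The plan is to reduce the statement to Fact~\ref{henscor} by passing to the integral closure. Let $K = \Frac(R)$ and let $\tilde R = \bigcap_{i=1}^m \Oo_i$ be the integral closure, with the $\Oo_i$ valuation rings on $K$, as provided by Lemma~\ref{decompose}. By that lemma, each $\Oo_i$ is a definable subset of $K$ — in fact definable in the pure ring structure on $R$ (or on $K$, which is interpretable). Since $R$ is NIP, so is the structure $(K, \Oo_1, \ldots, \Oo_m)$, because it is interpretable in $R$. Iterating Fact~\ref{henscor} over pairs $\Oo_i, \Oo_j$, we conclude that the valuation rings $\Oo_1, \ldots, \Oo_m$ are pairwise comparable, hence linearly ordered by inclusion; but pairwise comparable valuation rings on the same field whose intersection is $\tilde R$ must then all coincide (by Fact~\ref{multival}(1), the $\Oo_i$ are pairwise incomparable unless $m = 1$). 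So $\tilde R = \Oo_1$ is a single valuation ring, and in particular $\tilde R$ is local.

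Next I would transfer this back to $R$. Since $\tilde R$ is integral over $R$ and local, the going-up theorem shows $R$ has a unique maximal ideal, i.e. $R$ is local. More importantly, prime ideals of $R$ correspond (via lying-over and going-up for the integral extension $R \subseteq \tilde R$) to a subset of the prime ideals of $\tilde R$, and $\Spec \tilde R$ is a chain because $\tilde R$ is a valuation ring. Concretely: given two primes $\pp, \qq \in \Spec R$, choose primes $\tilde\pp, \tilde\qq \in \Spec \tilde R$ lying over them; since $\Spec \tilde R$ is linearly ordered, say $\tilde\pp \subseteq \tilde\qq$, and then $\pp = \tilde\pp \cap R \subseteq \tilde\qq \cap R = \qq$. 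Hence $\Spec R$ is a chain.

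The main obstacle is making sure the NIP hypothesis genuinely transfers to the structure $(K, \Oo_1, \ldots, \Oo_m)$ in a way that lets Fact~\ref{henscor} apply: one needs the $\Oo_i$ to be definable (not merely externally definable) so that the expanded valued field is interpretable in $R$ and therefore NIP. This is exactly the content of Lemma~\ref{decompose}, so the obstacle is already dispatched — the genuine definability of the $\Oo_i$, rather than just their external definability from Corollary~\ref{overrings}, is the key point that makes the argument go through. A minor additional check is the case $\tilde R = K$ (i.e. $R$ is already a field, $m = 0$), where the statement is trivial since $\Spec R = \{0\}$.
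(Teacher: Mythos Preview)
Your proof is correct but takes a genuinely different route from the paper's. The paper argues directly by contradiction: given two incomparable primes $\pp_1, \pp_2 \in \Spec R$, it invokes Chevalley's extension theorem to find valuation rings $(\Oo_i,\mm_i)$ on $K = \Frac(R)$ with $\Oo_i \supseteq R$ and $\mm_i \cap R = \pp_i$; these $\Oo_i$ are overrings, hence \emph{externally} definable by Corollary~\ref{overrings}, so $(K,\Oo_1,\Oo_2)$ is interpretable in the Shelah expansion $R^{\Sh}$ and therefore NIP, and Fact~\ref{henscor} gives the contradiction. Your approach instead passes through the integral closure: you use the genuine definability from Lemma~\ref{decompose} to conclude that $\tilde R$ is a single valuation ring (this is essentially Lemma~\ref{decompose2}, which the paper proves afterwards), and then pull the chain condition on $\Spec \tilde R$ back to $\Spec R$ via lying-over for the integral extension $R \subseteq \tilde R$. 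Your route uses a heavier input (Lemma~\ref{decompose} rather than Corollary~\ref{overrings}) but yields Lemma~\ref{decompose2} for free along the way. One small correction: your remark that external definability would not suffice is wrong---the paper's proof shows precisely that it does, because $R^{\Sh}$ is NIP (Remark~\ref{shelah-rem}); so your emphasis on the genuine definability in Lemma~\ref{decompose} is unnecessary, though not harmful.
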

\begin{proof}
  Suppose $\pp_1, \pp_2 \in \Spec R$ are incomparable.  Let $K =
  \Frac(R)$.  By Chevalley's extension theorem
  \cite[Theorem~3.1.1]{PE}, there exist valuation rings
  $(\Oo_i,\mm_i)$ on $K$ for $i = 1, 2$ such that $\Oo_i \supseteq R$
  and $\pp_i = \mm_i \cap R$.  Then $\Oo_1$ and $\Oo_2$ are
  incomparable.  (If, say, $\Oo_1 \subseteq \Oo_2$, then $\mm_1
  \supseteq \mm_2$ and $\pp_1 \supseteq \pp_2$, a contradiction.)  By
  Corollary~\ref{overrings}, $\Oo_1$ and $\Oo_2$ are both definable in
  $R^\Sh$, contradicting Fact~\ref{henscor}.
\end{proof}
In particular, the class of NIP rings of finite breadth is a henselizing
class (Definition~\ref{vgood}), and so Proposition~\ref{blackbox}
applies, giving the following:
\begin{theorem}[Assuming HC] \label{nip-w}
  Let $R$ be an NIP ring with $\br(R) < \infty$.
  \begin{enumerate}
  \item $R$ is a finite product of henselian local rings.
  \item If $R$ is an integral domain, then $R$ is a henselian local
    domain.
  \end{enumerate}
\end{theorem}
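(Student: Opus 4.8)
The plan is to deduce this entirely from the ``good classes'' machinery of Section~3, specifically Proposition~\ref{blackbox}, rather than re-running any ring-theoretic arguments by hand. First I would let $\mathcal{K}$ be the class of NIP rings of finite breadth, viewed as the intersection of the class of all NIP rings with the class of all rings of finite breadth. The class of NIP rings is good (as recorded in Section~3), the class of finite-breadth rings is good (by the Lemma just established), and an intersection of good classes is good, so $\mathcal{K}$ is a good class in the sense of Definition~\ref{good}.

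Next I would upgrade ``good'' to ``very good'', i.e., check that $\mathcal{K}$ contains no problematic ring. Suppose it did: then there would be an NIP integral domain $R$ of finite breadth with exactly two maximal ideals $\mm_1, \mm_2$, both with infinite residue field. Such an $R$ is an NIP $W_n$-domain for $n = \br(R)$, and $\mm_1, \mm_2$ are incomparable, so $\Spec R$ is not a chain. This contradicts Lemma~\ref{comparability}. Hence $\mathcal{K}$ is very good (Definition~\ref{vgood}).

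Finally, since $R \in \mathcal{K}$, I would simply invoke Proposition~\ref{blackbox}: its part~(1) gives that $R$ is a finite product of henselian local rings, which is conclusion~(1) of the theorem; and when $R$ is an integral domain, parts~(2)--(3) of that proposition give that $R$ is henselian with linearly ordered prime ideals, so in particular $R$ is a single local ring (a domain that is a finite product of rings is one of the factors), hence a henselian local domain, which is conclusion~(2). The step carrying all the weight is the verification that $\mathcal{K}$ is very good, and this in turn rests entirely on Lemma~\ref{comparability}, whose proof is the genuine use of the henselianity conjecture — via Chevalley's extension theorem one lifts incomparable primes of $R$ to incomparable valuation overrings, which are externally definable by Corollary~\ref{overrings} and hence visible in $R^{\Sh}$, so that Fact~\ref{henscor} applies. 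Everything downstream of that lemma, including the present theorem, is a formal consequence, so I expect no further obstacle.
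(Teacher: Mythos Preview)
Your proposal is correct and matches the paper's approach exactly: the paper also takes $\mathcal{K}$ to be the class of NIP rings of finite breadth, observes it is good (as an intersection of two good classes), uses Lemma~\ref{comparability} to rule out problematic rings so that $\mathcal{K}$ is very good, and then invokes Proposition~\ref{blackbox}. There is nothing to add.
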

Assuming the henselianity conjecture, we can also strengthen
Lemma~\ref{decompose}:
\begin{lemma}[Assuming HC] \label{decompose2}
  Let $R$ be an NIP domain with $\br(R) < \infty$.  Let $\tilde{R}$ be
  the integral closure of $R$.  Then $\tilde{R}$ is a henselian
  valuation ring, definable in $R$.
\end{lemma}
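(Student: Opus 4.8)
The plan is to apply Theorem~\ref{nip-w} to the ring $\tilde{R}$ itself, and then read off that it is a valuation ring from the multivaluation decomposition already established in Lemma~\ref{decompose}. If $R$ is a field there is nothing to prove, since then $\tilde{R} = R = \Frac(R)$ is (trivially) a definable henselian valuation ring; so I would assume $R$ is not a field, and write $K = \Frac(R)$.

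First I would check that $\tilde{R}$ satisfies the hypotheses of Theorem~\ref{nip-w}. It is an overring of the $W_n$-domain $R$, hence itself a $W_n$-domain by Fact~\ref{whoops}, so $\br(\tilde{R}) < \infty$. Moreover $\tilde{R}$ is a definable subset of $K$ by Lemma~\ref{decompose}, and $K$ is interpretable in $R$; thus $\tilde{R}$, with its induced ring structure, is interpretable in $R$ and therefore NIP. Applying Theorem~\ref{nip-w}(2), $\tilde{R}$ is a henselian local domain.

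To conclude, I would invoke Lemma~\ref{decompose} (together with Fact~\ref{multival}) once more: $\tilde{R} = \bigcap_{i=1}^m \Oo_i$, where the $\Oo_i$ are the localizations of $\tilde{R}$ at its maximal ideals and are definable subsets of $K$. Since $\tilde{R}$ is local, $m = 1$, so $\tilde{R} = \Oo_1$ is a single valuation ring; it is definable by Lemma~\ref{decompose}, and it is henselian because it is a henselian local ring. There is no serious obstacle here — all the substance sits in Theorem~\ref{nip-w} and Lemma~\ref{decompose} — and the only point needing a moment's care is the transfer of NIP to $\tilde{R}$, which is handled by noting that $\tilde{R}$ is interpretable in $R$ via its definability inside $\Frac(R)$. (One could instead try to argue directly that a local integrally closed domain of finite breadth with linearly ordered spectrum is a valuation ring, but routing through the multivaluation decomposition is cleaner and avoids that commutative-algebra lemma.)
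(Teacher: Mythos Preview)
Your argument is correct, but it takes a heavier route than the paper's. The paper proceeds directly: by Lemma~\ref{decompose}, $\tilde{R}$ is a definable intersection $\bigcap_{i=1}^n \Oo_i$ of definable valuation rings; after passing to pairwise incomparable $\Oo_i$, Fact~\ref{henscor} forces $n=1$, and then the henselianity conjecture applied to the single valuation ring $\Oo_1$ gives henselianity. No appeal to Theorem~\ref{nip-w} (and hence to Proposition~\ref{blackbox} and the ``very good class'' machinery) is needed.

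Your route---verifying that $\tilde{R}$ is itself an NIP domain of finite breadth and invoking Theorem~\ref{nip-w}(2) to get locality and henselianity, then reading off $m=1$ from Fact~\ref{multival}---works fine, since Theorem~\ref{nip-w} precedes this lemma and does not depend on it. What it buys is the observation that $\tilde{R}$ inherits the full package of Theorem~\ref{nip-w}; what it costs is the detour through Proposition~\ref{blackbox}, when the only facts genuinely required are the comparability statement (Fact~\ref{henscor}) and HC for a single valuation ring. The paper's proof is thus more self-contained relative to the assumption HC, while yours leans on machinery that itself packages HC in a more elaborate form.
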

\begin{proof}
  By Lemma~\ref{decompose}, $\tilde{R}$ is definable and an
  intersection $\bigcap_{i = 1}^n \Oo_i$ of definable valuation rings,
  for some $n$.  We may assume the $\Oo_i$ are pairwise incomparable.
  Then $n = 1$ by Fact~\ref{henscor}, and $\Oo_1$ is henselian by the
  henselianity conjecture.
\end{proof}

\section{Dp-finite rings} \label{sec-moved-8.1-start}
Drop the assumption of the henselianity conjecture.

A \emph{definable topology} on a structure $M$ means
a topology such that some definable family $\{D_a\}_{a \in X}$ is a
basis of opens.  A topology on a field $K$ is a \emph{V-topology} if it is induced by a valuation or absolute value; see \cite[Section~3]{PZ} and \cite[Appendix~B]{PE} for more about V-topologies.  Definable V-topologies play an important role in the conjectural classification of NIP fields \cite{hhj-v-top}.

Recall that a structure is \emph{dp-finite} if its dp-rank is $n$ for
some natural number $n$.
\begin{fact}
  \phantomsection \label{df-hens}
  \begin{enumerate}
  \item If $\Oo$ is a dp-finite valuation ring, then $\Oo$ is
    henselian.
  \item If $(K,\Oo_1,\Oo_2)$ is a dp-finite bi-valued field, then
    $\Oo_1$ and $\Oo_2$ are comparable.
  \end{enumerate}
\end{fact}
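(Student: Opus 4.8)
The plan is to obtain both parts from existing results on dp-finite valued fields. Part~(1) requires nothing new: it is exactly \cite[Corollary~4.16(3)]{prdf6}, recalled in the introduction as the second known instance of the henselianity conjecture. For part~(2) the natural route is to relativize, to the dp-finite setting, the argument that the henselianity conjecture implies Fact~\ref{henscor}, i.e.\ the proof of \cite[Corollary~5.6]{hhj-v-top}; part~(1) then supplies precisely the henselianity inputs that this argument consumes. So suppose toward a contradiction that $(K,\Oo_1,\Oo_2)$ is dp-finite with $\Oo_1, \Oo_2$ incomparable.

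First I would reduce to the \emph{independent} case. Let $\Oo_0$ be the smallest valuation ring of $K$ containing $\Oo_1$ and $\Oo_2$ (equivalently, the subring of $K$ generated by $\Oo_1 \cup \Oo_2$), let $\mm_0$ be its maximal ideal, and pass to the residue field $k_0 = \Oo_0/\mm_0$. Since $\mm_0 \subseteq \Oo_i$, the images $\bar\Oo_i := \Oo_i/\mm_0$ are valuation rings of $k_0$; they remain incomparable and nontrivial (a containment $\bar\Oo_1 \subseteq \bar\Oo_2$, or the equality $\bar\Oo_1 = k_0$, would pull back to $\Oo_1 \subseteq \Oo_2$), and now $\bar\Oo_1 \cdot \bar\Oo_2 = k_0$, so they are independent. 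Moreover $\Oo_0$ is externally definable as a subset of $(K,\Oo_1,\Oo_2)$, being the increasing union of the uniformly definable sets of sums of at most $n$ products $ab$ with $a \in \Oo_1$, $b \in \Oo_2$; hence $(k_0,\bar\Oo_1,\bar\Oo_2)$ is interpretable in $(K,\Oo_1,\Oo_2)^{\Sh}$ and therefore again dp-finite, by Remark~\ref{shelah-rem} and the standard preservation of finite dp-rank under interpretation. Thus we may assume from the start that $\Oo_1, \Oo_2$ are independent valuations on a dp-finite field.

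Now $\Oo_1$ and $\Oo_2$ determine two independent V-topologies on the dp-finite field $k_0$, and I would run the v-topology analysis of \cite{hhj-v-top} in this case, checking that it only uses interpretations, ultraproducts, passage to coarsenings and residue fields, and the henselianity of the NIP valuation rings that it manufactures. Every such valuation ring is here a dp-finite valuation ring, hence henselian by part~(1), and dp-finiteness is preserved along the whole chain of constructions; this yields the desired contradiction and proves part~(2). (Alternatively, one could argue directly from the canonical V-topology attached to an unstable dp-finite field together with its universal property, which would force the two definable V-topologies on $k_0$ to be comparable, hence dependent, against independence.)

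I expect the main obstacle to be exactly this bookkeeping: verifying that the construction of \cite[Corollary~5.6]{hhj-v-top}, together with the background v-topology facts it invokes, genuinely descends to the dp-finite world via Remark~\ref{shelah-rem} and part~(1), with no step secretly requiring an input available only under the full NIP hypothesis rather than under dp-finiteness. Since part~(2) is of the same flavor as \cite[Corollary~4.16(3)]{prdf6}, it may in fact already be available as a companion statement in that reference, in which case it is simply cited.
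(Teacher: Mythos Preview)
Your proposal is correct and follows essentially the same route as the paper: reduce to the independent case by passing to the residue field of the join $\Oo_0$ of $\Oo_1$ and $\Oo_2$, and then obtain a contradiction from the existence of two distinct definable V-topologies on a dp-finite field. Two points where the paper is cleaner are worth noting. First, you argue that $\Oo_0$ is only externally definable (as a directed union of finite sums of products) and therefore invoke the Shelah expansion; the paper instead uses the valuation-theoretic fact that the join is literally the set $\Oo_1 \cdot \Oo_2 = \{xy : x \in \Oo_1,\ y \in \Oo_2\}$, which is outright definable in $(K,\Oo_1,\Oo_2)$, so no detour through $M^{\Sh}$ is needed. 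Second, rather than replaying the argument of \cite[Corollary~5.6]{hhj-v-top} and checking that each step survives under dp-finiteness, the paper directly cites \cite[Corollary~4.16(2)]{prdf6}, which already says a dp-finite field admits at most one definable V-topology; your parenthetical ``alternative'' and your closing guess that part~(2) might be a companion statement in the same reference are exactly on target. So your plan works, but the paper's version avoids both the external-definability bookkeeping and the need to audit the hhj-v-top proof.
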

\begin{proof}
  This is essentially \cite[Corollary~4.16]{prdf6}, though part (2)
  requires some explanation.  Part (1) is
  \cite[Corollary~4.16(3)]{prdf6}.  For part (2), suppose $\Oo_1$ and
  $\Oo_2$ are incomparable.  Then $\Oo_1 \cdot \Oo_2 = \{xy : x \in
  \Oo_1, ~ y \in \Oo_2\}$ is a definable valuation ring in the
  structure $(K,\Oo_1,\Oo_2)$.  Let $k$ be the residue field of $\Oo_1
  \cdot \Oo_2$.  The valuation rings $\Oo_1$ and
  $\Oo_2$ induce independent non-trivial valuations on $k$ (see \cite{PE}, specifically Theorem~2.3.4 and the discussion following Corollary~2.3.2).  Replacing
  $K$ with $k$, we may assume that $\Oo_1$ and $\Oo_2$ are independent
  non-trivial valuations.  Then the structure $(K,\Oo_1,\Oo_2)$ is unstable and has
  two definable V-topologies, contradicting
  \cite[Corollary~4.16(2)]{prdf6}.
\end{proof}

\begin{lemma} \label{comparability2}
  Let $R$ be a dp-finite $W_n$-domain.  Then $\Spec R$ is a chain.
\end{lemma}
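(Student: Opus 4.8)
The plan is to run the proof of Lemma~\ref{comparability} essentially verbatim, substituting Fact~\ref{df-hens}(2) for Fact~\ref{henscor} and using that the Shelah expansion preserves dp-finiteness (Remark~\ref{shelah-rem}) in place of the fact that it preserves NIP. First I would suppose toward a contradiction that $\pp_1, \pp_2 \in \Spec R$ are incomparable, set $K = \Frac(R)$, and apply Chevalley's extension theorem to produce valuation rings $(\Oo_i, \mm_i)$ on $K$ with $\Oo_i \supseteq R$ and $\mm_i \cap R = \pp_i$ for $i = 1, 2$. The same one-line argument as in Lemma~\ref{comparability} shows that $\Oo_1$ and $\Oo_2$ are incomparable: if, say, $\Oo_1 \subseteq \Oo_2$, then $\mm_1 \supseteq \mm_2$, and contracting to $R$ gives $\pp_1 \supseteq \pp_2$, contrary to incomparability.

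Next I would handle the definability. Since $R$ is a $W_n$-domain, each overring $\Oo_i$ is externally definable as a subset of $K = \Frac(R)$ by Corollary~\ref{overrings}, hence genuinely definable in $R^{\Sh}$. The fraction field $K$ is interpretable in $R$, and a fortiori in $R^{\Sh}$, so the bi-valued field $(K, \Oo_1, \Oo_2)$ is interpretable in $R^{\Sh}$. Because $R$ is dp-finite, $R^{\Sh}$ is dp-finite by Remark~\ref{shelah-rem}, and therefore so is anything interpretable in it; in particular $(K, \Oo_1, \Oo_2)$ is a dp-finite bi-valued field. But then Fact~\ref{df-hens}(2) forces $\Oo_1$ and $\Oo_2$ to be comparable, a contradiction. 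Hence no incomparable pair of primes exists, i.e.\ $\Spec R$ is a chain.

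I do not expect a serious obstacle here, as every ingredient is already assembled; the only point needing a moment's care is that dp-rank is not increased by passing to an interpretable structure, so that dp-finiteness of $R^{\Sh}$ really descends to $(K, \Oo_1, \Oo_2)$ — this is the standard dp-rank analogue of the closure statements mentioned in Remark~\ref{shelah-rem}. Since the closure properties of the class of dp-finite rings and Fact~\ref{df-hens} were already collected for exactly this kind of argument, one could alternatively phrase the whole proof as the instance of Lemma~\ref{comparability}'s proof obtained by replacing ``NIP plus the henselianity conjecture'' with ``dp-finite,'' which is what the sentence preceding the statement indicates.
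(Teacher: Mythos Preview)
Your proposal is correct and follows exactly the approach the paper takes: as the paper itself says, ``The proof of Lemma~\ref{comparability} then gives'' the result, meaning one runs the same argument (Chevalley extension, incomparability of the $\Oo_i$, external definability via Corollary~\ref{overrings}) and replaces the appeal to Fact~\ref{henscor} with Fact~\ref{df-hens}(2). Your added remark that dp-finiteness passes to $R^{\Sh}$ via Remark~\ref{shelah-rem} is the only point the paper leaves implicit, and you handle it correctly.
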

\begin{proof}
  Suppose $\pp_1, \pp_2 \in \Spec R$ are incomparable.  As in the proof of Lemma~\ref{comparability},\footnote{This part of the proof of Lemma~\ref{comparability} did not use the assumption of the henselianity conjecture.} we can find two incomparable valuation rings $\Oo_1, \Oo_2$ on $K = \Frac(R)$ with $\Oo_1,\Oo_2 \supseteq R$.  By Corollary~\ref{overrings}, $\Oo_1$ and $\Oo_2$ are both definable in $R^{\Sh}$, contradicting Fact~\ref{df-hens}(2).
\end{proof}
Therefore the class of dp-finite rings of finite breadth is a henselizing class, and Proposition~\ref{blackbox} yields the following:
\begin{lemma} \label{lem:dpf}
  Let $R$ be a dp-finite ring with $\br(R) < \infty$.
  \begin{enumerate}
  \item $R$ is a direct product of finitely many henselian local
    rings.
  \item If $R$ is an integral domain, then $R$ is a henselian local
    domain and the prime ideals of $R$ are linearly ordered.
  \end{enumerate}
\end{lemma}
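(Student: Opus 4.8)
The plan is to deduce this from the black box Proposition~\ref{blackbox}, so all the work is to exhibit a very good class of NIP rings to which $R$ belongs. Let $\mathcal{K}$ be the class of dp-finite rings of finite breadth. First I would note that $\mathcal{K}$ is good: it is the intersection of the class of dp-finite rings (good, as recorded after Definition~\ref{good}) with the class of rings of finite breadth (good, as shown just above via Facts~\ref{whoops}, \ref{base}, \ref{additive} and Remarks~\ref{closed2}, \ref{closed1}), and any intersection of good classes is good. Moreover every dp-finite ring is NIP, so $\mathcal{K}$ is a class of NIP rings.

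Next I would verify that $\mathcal{K}$ contains no problematic ring. If $R \in \mathcal{K}$ is an integral domain, then $R$ is a dp-finite $W_n$-domain with $n = \br(R) < \infty$, so Lemma~\ref{comparability2} tells us $\Spec R$ is a chain. In particular $R$ has a unique maximal ideal, hence does not have exactly two maximal ideals, so $R$ is not problematic (Definition~\ref{vgood}). Therefore $\mathcal{K}$ is very good.

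Finally I would apply Proposition~\ref{blackbox} to $R \in \mathcal{K}$: part (1) of the proposition gives part (1) of the lemma directly, and if in addition $R$ is an integral domain, parts (2)--(3) of the proposition say that $R$ is henselian and its prime ideals are linearly ordered, which together with part (1) (a domain is not a product of more than one ring) gives that $R$ is a henselian local domain with linearly ordered prime ideals, i.e.\ part (2).

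I do not anticipate a genuine obstacle here; the proof is a bookkeeping deduction. The one point worth flagging is that, unlike Theorem~\ref{nip-w}, this lemma is unconditional: the only input comparable to Fact~\ref{henscor} is Lemma~\ref{comparability2}, whose proof uses Fact~\ref{df-hens}(2) (the dp-finite bi-valued field statement) in place of the henselianity conjecture, so no conjectural assumption is needed.
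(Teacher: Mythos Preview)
Your proposal is correct and follows exactly the paper's approach: the paper likewise observes that the class of dp-finite rings of finite breadth is very good (using Lemma~\ref{comparability2} to rule out problematic rings) and then invokes Proposition~\ref{blackbox}. Your write-up merely spells out in more detail what the paper states in one sentence preceding the lemma.
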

We will see shortly that we can remove the finite-breadth assumption
in Lemma~\ref{lem:dpf}.
\begin{lemma} \label{deflat}
  Let $N$ be some structure.  Let $R$ be a definable ring and $M$ be a
  definable $R$-module.  Let $\Sub^{\df}_R(M)$ be the modular lattice
  of \emph{definable} $R$-submodules of $M$.  Then $\br(M)$ is the
  breadth of $\Sub^{\df}_R(M)$.
\end{lemma}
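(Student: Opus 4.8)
The plan is to prove the two inequalities $\br(\Sub^{\df}_R(M)) \le \br_R(M)$ and $\br_R(M) \le \br(\Sub^{\df}_R(M))$ separately, using the characterizations of breadth recalled in Section~\ref{sec:breadth}.

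For the first inequality I would first check that $\Sub^{\df}_R(M)$ is genuinely a sublattice of $\Sub_R(M)$: the intersection of two definable submodules is a definable submodule, and the sum $M_1 + M_2 = \{a + b : a \in M_1, ~ b \in M_2\}$ of two definable submodules is the image of a definable set under the definable addition map, hence definable. Since any submodule (definable or not) containing $M_1$ and $M_2$ contains $M_1 + M_2$, the join of $M_1$ and $M_2$ computed in $\Sub^{\df}_R(M)$ agrees with the join computed in $\Sub_R(M)$, so the inclusion is a sublattice inclusion. Now characterization~(1) of lattice breadth shows that a sublattice has breadth at most that of the ambient lattice: a copy of the powerset of $n$ inside $\Sub^{\df}_R(M)$ is also one inside $\Sub_R(M)$. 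Hence $\br(\Sub^{\df}_R(M)) \le \br(\Sub_R(M)) = \br_R(M)$.

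For the second inequality, set $n := \br(\Sub^{\df}_R(M))$; we may assume $n < \infty$, since otherwise the first inequality already forces equality. Fix $x_1, \dots, x_{n+1} \in M$. The key observation is that each cyclic submodule $Rx_i = \{r x_i : r \in R\}$ is a \emph{definable} $R$-submodule of $M$, because $R$ and $M$ are definable and the scalar action is definable. Applying the join-version of the ``$\br \le n$'' characterization (characterization~(2) for lattices) inside $\Sub^{\df}_R(M)$ to the $n+1$ elements $Rx_1, \dots, Rx_{n+1}$, and using that joins there are sums of submodules, we obtain an index $i$ with $Rx_1 + \cdots + Rx_{n+1} = Rx_1 + \cdots + \widehat{Rx_i} + \cdots + Rx_{n+1}$. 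By the third characterization of $\br_R(M)$ (the one phrased in terms of elements $x_1,\dots,x_{n+1} \in M$ and their cyclic submodules), this proves $\br_R(M) \le n$.

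I do not expect a real obstacle here: the only steps requiring a moment of care are verifying that $\Sub^{\df}_R(M)$ is closed under sums so that it is a sublattice with the same joins as $\Sub_R(M)$, and noting that cyclic submodules $Rx$ are definable so that the module-theoretic breadth condition can be read off from the lattice $\Sub^{\df}_R(M)$. Once these are in place, both inequalities follow immediately from the equivalent characterizations of breadth.
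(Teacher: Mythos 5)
Your proof is correct and is essentially the same as the paper's: both directions rest on $\Sub^{\df}_R(M)$ being a sublattice of $\Sub_R(M)$ and on the cyclic submodules $Rx_i$ being definable, so that the element-wise characterization of $\br_R(M)$ can be tested inside the definable lattice. The paper phrases the second inequality contrapositively ($n \le \br(M) \Rightarrow n \le \br(\Sub^{\df}_R(M))$), but this is only a cosmetic difference.
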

\begin{proof}
  First of all,
  \begin{equation*}
    \br(\Sub^\df_R(M)) \le \br(\Sub_R(M)) \stackrel{\df}{=} \br(M),
  \end{equation*}
  as $\Sub^\df_R(M)$ is a sublattice of $\Sub_R(M)$.  It remains to show that
  \begin{equation*}
    n \le \br(M) \implies n \le \br(\Sub^\df_R(M))
  \end{equation*}
  for each finite $n$.  Take $x_1,\ldots,x_n \in M$ such that
  \begin{equation*}
    Rx_1 + \cdots + Rx_n \ne Rx_1 + \cdots + \widehat{Rx_i} + \cdots + Rx_n
  \end{equation*}
  for all $1 \le i \le n$.  Then the definable $R$-submodules
  $Rx_1,\ldots,Rx_n$ show that $\Sub^\df_R(M)$ has breadth at least
  $n$.
\end{proof}
\begin{remark} \label{infinite}
  If $M$ is a non-trivial $R$-module, then $M$ has a simple
  subquotient.  That is, there are submodules $M'' < M' \le M$ such
  that $M'/M''$ is a simple $R$-module, isomorphic to $R/\mm$ for some
  maximal ideal $\mm$.  To see this, take non-zero $x \in M$, take $M'
  = Rx \subseteq M$, and take $M''$ a maximal submodule of $M'$ not
  containing $x$.  As a consequence, if $R/\mm$ is infinite for every
  maximal ideal $\mm$, then every non-trivial $R$-module is infinite.
\end{remark}
\begin{lemma} \label{dprbr}
  Let $R$ be a dp-finite ring.  Suppose $R/\mm$ is infinite for every
  maximal ideal $\mm$ of $R$.  Then $\br(R) \le \dpr(R)$.  In
  particular, $\br(R)$ is finite.
\end{lemma}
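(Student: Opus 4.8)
My plan is to prove the contrapositive-flavored implication ``$\br(R) \ge n$ implies $\dpr(R) \ge n$'' for every finite $n$. Granting this, dp-finiteness of $R$ forbids $\br(R) = \infty$, and applying it with $n = \br(R)$ gives $\br(R) \le \dpr(R)$. So fix $n$ with $\br(R) \ge n$; it suffices to show $\dpr(R) \ge n$.

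The first step is to reduce to \emph{definable} ideals. By Lemma~\ref{deflat} applied to the $R$-module $M = R$, the breadth $\br(R) = \br_R(R)$ equals the breadth of the lattice $\Sub^{\df}_R(R)$ of definable ideals of $R$, so this lattice contains a sublattice isomorphic to the powerset of $n$. As usual this produces definable ideals $N \subseteq N'$ together with definable ideals $N \subseteq B_1, \dots, B_n \subseteq N'$ satisfying $\sum_{i=1}^n B_i = N'$ and $B_i \cap \sum_{j \ne i} B_j = N$ for each $i$ (the modular-lattice computation as in the proof of Lemma~\ref{deflat}, or \cite[Remark~6.7]{prdf3}); hence $N'/N \cong \bigoplus_{i=1}^n (B_i/N)$, an internal direct sum of $n$ non-trivial $R$-submodules. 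Since $R/\mm$ is infinite for every maximal ideal $\mm$, Remark~\ref{infinite} shows that each summand $B_i/N$ is infinite.

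Next comes the dp-rank bookkeeping. The quotient $N'/N$ is interpretable in $R$, being the quotient of the definable subgroup $N'$ of $(R,+)$ by the definable subgroup $N$, and the direct-sum decomposition gives a definable bijection $\prod_{i=1}^{n}(B_i/N) \xrightarrow{\sim} N'/N$. A product of $n$ infinite definable sets has dp-rank at least $n$ --- for instance the coordinate formulas $x_i = y$, fed with sequences of distinct elements of the $B_i/N$, form an ict-pattern of depth $n$ --- so $\dpr(N'/N) \ge n$. On the other hand $N'$ is a definable subset of the home sort $R$, so $\dpr(N') \le \dpr(R)$, and dp-rank does not increase along the definable surjection $N' \twoheadrightarrow N'/N$. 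Combining these, $\dpr(R) \ge \dpr(N'/N) \ge n$, as required.

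The step requiring genuine care is the first one: breadth is a statement about \emph{all} submodules, so Lemma~\ref{deflat} is really needed in order to take the witnessing ideals $N, N', B_i$ definable --- and, just as importantly, to observe that they are subsets of the home sort $R$ itself, which is precisely what pins the final bound to $\dpr(R)$ rather than to some multiple of it. Everything else --- passing from a powerset-of-$n$ sublattice to an internal direct sum of $n$ submodules, the infiniteness of non-trivial modules under the residue-field hypothesis, and the behaviour of dp-rank under finite products and definable surjections --- is routine.
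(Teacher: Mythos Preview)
Your proof is correct and follows essentially the same route as the paper's: invoke Lemma~\ref{deflat} to find a powerset-of-$n$ sublattice among the \emph{definable} ideals, extract from it a definable internal direct sum $N'/N \cong \bigoplus_i B_i/N$ with each summand infinite by Remark~\ref{infinite}, and then bound $\dpr(R)$ below via $\dpr(N'/N)$. The only cosmetic difference is that you spell out the dp-rank estimate for the product via an explicit ict-pattern, whereas the paper cites subadditivity of dp-rank on products directly.
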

\begin{proof}
  Suppose $\br(R) \ge n$.  By Lemma~\ref{deflat}, the lattice
  $\Sub^\df_R(R)$ of definable ideals has breadth at least $n$, so it
  contains a sublattice isomorphic to the powerset of $n$.  It follows
  that there are definable ideals $I < I' \le R$ and $J_1,\ldots,J_n$
  such that $I < J_i \le I'$, and $I'/I$ is an internal direct sum
  $\bigoplus_{i = 1}^n (J_i/I)$.  Since dp-rank is additive on products ($\dpr(X \times Y) = \dpr(X) + \dpr(Y)$), the dp-rank of the interpretable group $I'/I$ is given by $\dpr(I'/I) = \sum_{i=1}^n \dpr(J_i/I)$.  By Remark~\ref{infinite}, each
  interpretable group $J_i/I$ is infinite, and so $\dpr(J_i/I) \ge 1$.
  Then $\dpr(R) \ge \dpr(I') \ge \dpr(I'/I) = \sum_{i = 1}^n
  \dpr(J_i/I) \ge n$.  We have shown
  \begin{equation*}
    n \le \br(R) \implies n \le \dpr(R). \qedhere
  \end{equation*}
\end{proof}
\begin{example} \label{deH}
  If $(R,\mm)$ is a dp-minimal local domain and the residue field
  $R/\mm$ is infinite, then $\br(R) \le \dpr(R) = 1$, so $R$ is a
  $W_1$-domain, or equivalently, a valuation ring.  This was
  originally proven by d'Elb\'ee and Halevi
  \cite[Theorem~4.1]{halevi-delbee}.
\end{example}
\begin{lemma} \label{okay}
  Let $R$ be a problematic ring (Definition~\ref{vgood}), i.e., a domain with
  exactly two maximal ideals $\mm_1,\mm_2$, such that the quotients
  $R/\mm_1$ and $R/\mm_2$ are infinite.  Then $R$ is not dp-finite.
\end{lemma}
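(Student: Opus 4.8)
The plan is to assume for contradiction that $R$ is a problematic ring that is dp-finite, and derive a contradiction by showing that $R$ has finite breadth. If $R/\mm_1$ and $R/\mm_2$ are both infinite, then every maximal ideal of $R$ has infinite residue field, so Lemma~\ref{dprbr} applies and gives $\br(R) \le \dpr(R) < \infty$. Thus $R$ is a dp-finite ring of finite breadth, and moreover $R$ is an integral domain. By Lemma~\ref{lem:dpf}(2), $R$ is then a henselian local domain. But a local ring has exactly one maximal ideal, contradicting the assumption that $R$ has two distinct maximal ideals $\mm_1 \ne \mm_2$.

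Let me walk through the steps in order. First, I note that the hypothesis ``$R$ is problematic'' literally includes that $R/\mm_1$ and $R/\mm_2$ are infinite, and since $\mm_1, \mm_2$ are \emph{all} the maximal ideals of $R$, the hypothesis of Lemma~\ref{dprbr} --- namely that $R/\mm$ is infinite for every maximal ideal $\mm$ --- is satisfied. Second, apply Lemma~\ref{dprbr} to conclude $\br(R) \le \dpr(R)$; since $R$ is dp-finite, $\dpr(R) < \infty$, so $\br(R)$ is finite, say $\br(R) = n$. Third, $R$ is now a dp-finite $W_n$-domain, so Lemma~\ref{lem:dpf} applies: in particular part (2) tells us that $R$, being an integral domain, is a henselian \emph{local} domain. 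Fourth, observe that ``local'' means $R$ has a unique maximal ideal, directly contradicting $\mm_1 \ne \mm_2$. This completes the argument.

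There is no real obstacle here: the lemma is a short corollary that packages together Lemmas~\ref{dprbr} and \ref{lem:dpf}. The only point requiring a moment's care is verifying that the finiteness of all residue fields (needed for Lemma~\ref{dprbr}) follows from the definition of ``problematic,'' which it does since $\mm_1$ and $\mm_2$ exhaust the maximal ideals. One could alternatively phrase the contradiction via Proposition~\ref{blackbox}: the class of dp-finite rings of finite breadth is very good (as established just before Lemma~\ref{lem:dpf} via Lemma~\ref{comparability2}), so it contains no problematic rings by Definition~\ref{vgood}; but if $R$ were a dp-finite problematic ring, then Lemma~\ref{dprbr} would place $R$ in this class, a contradiction. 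Either route works; I would use the direct one through Lemma~\ref{lem:dpf} for concreteness.
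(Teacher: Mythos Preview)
Your argument is correct and essentially matches the paper's. Both proofs first apply Lemma~\ref{dprbr} to get finite breadth; the paper then invokes Lemma~\ref{comparability2} directly (the two maximal ideals are incomparable, contradicting that $\Spec R$ is a chain), whereas you invoke its packaged consequence Lemma~\ref{lem:dpf}(2) to reach the same contradiction via locality. One small slip: you wrote ``finiteness of all residue fields'' when you meant \emph{infiniteness}.
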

\begin{proof}
  Suppose otherwise.  By Lemma~\ref{dprbr}, $\br(R)$ is finite.  The
  incomparability of $\mm_1$ and $\mm_2$ contradicts
  Lemma~\ref{comparability2}.
\end{proof}
It follows that the class of dp-finite rings is henselizing.  Then
Proposition~\ref{blackbox} gives the following:
\begin{theorem} \label{dft}
  Let $R$ be a dp-finite ring.
  \begin{enumerate}
  \item $R$ is a direct product of finitely many henselian local rings.
  \item If $R$ is an integral domain, then $R$ is a henselian local
    domain and the prime ideals of $R$ are linearly ordered.
  \end{enumerate}
\end{theorem}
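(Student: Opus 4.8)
The statement to prove is Theorem~\ref{dft}: every dp-finite ring $R$ is a finite product of henselian local rings, and if $R$ is a domain then it is henselian local with linearly ordered spectrum. The whole strategy is to invoke Proposition~\ref{blackbox} once we know the class of dp-finite rings is \emph{very good} (Definition~\ref{vgood}). Recall that "good" for dp-finite rings was already asserted in the excerpt (closure under (Loc), (El), (Quot), (Free), via interpretability in the Shelah expansion and the fact that $M^\Sh$ has the same dp-rank), so the only remaining obligation is that the class contains no \emph{problematic} ring — a domain with exactly two maximal ideals both having infinite residue field.

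**Key steps, in order.** First I would invoke Lemma~\ref{okay}: a problematic ring is not dp-finite. This is where all the real work sits, and it chains together the earlier machinery: if such an $R$ were dp-finite, then since both residue fields are infinite, every maximal ideal of $R$ has infinite quotient, so Lemma~\ref{dprbr} gives $\br(R) \le \dpr(R) < \infty$; but then Lemma~\ref{comparability2} (the dp-finite analogue of Lemma~\ref{comparability}, which uses Fact~\ref{df-hens}(2) on dp-finite bi-valued fields in place of the henselianity conjecture) forces $\Spec R$ to be a chain, contradicting the incomparability of $\mm_1$ and $\mm_2$. Second, having ruled out problematic rings, I conclude the class of dp-finite rings is very good. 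Third, I apply Proposition~\ref{blackbox} directly: part (1) of that proposition gives that $R$ is a finite product of henselian local rings, and parts (2)--(3) give that when $R$ is a domain it is henselian and its primes are linearly ordered. That is exactly the two clauses of Theorem~\ref{dft}.

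**Main obstacle.** The conceptually delicate point — already discharged in the excerpt, so I may cite it — is Lemma~\ref{comparability2}, i.e. that for a dp-finite $W_n$-domain $\Spec R$ is a chain. Its proof mimics Lemma~\ref{comparability}: given incomparable primes $\pp_1, \pp_2$, Chevalley's extension theorem produces incomparable valuation rings $\Oo_1, \Oo_2$ on $\Frac(R)$ dominating them, both externally definable by Corollary~\ref{overrings} since $R$ has finite breadth, and this contradicts Fact~\ref{df-hens}(2) in the Shelah expansion $R^\Sh$ (which remains dp-finite by Remark~\ref{shelah-rem}). Once that lemma and Lemma~\ref{dprbr} are in hand, the proof of Theorem~\ref{dft} is essentially bookkeeping: assemble "good" $+$ "no problematic rings" $=$ "very good," then quote Proposition~\ref{blackbox}. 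So the actual new content of this theorem over Lemma~\ref{lem:dpf} is solely the removal of the finite-breadth hypothesis, which Lemma~\ref{dprbr} and Lemma~\ref{okay} accomplish by showing that the dangerous configuration (infinite residue fields at every maximal ideal) automatically forces finite breadth, and the only other configuration (some finite residue field) cannot yield a problematic ring by definition.
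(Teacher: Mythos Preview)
Your proposal is correct and matches the paper's argument exactly: after Lemma~\ref{okay} establishes that no problematic ring is dp-finite, the class of dp-finite rings is very good, and Proposition~\ref{blackbox} yields both parts of Theorem~\ref{dft}. Your explanation of how Lemma~\ref{okay} reduces to Lemmas~\ref{dprbr} and~\ref{comparability2} is also accurate and mirrors the paper's reasoning.
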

In the dp-minimal case, the fact that the prime ideals are linearly
ordered is due to d'Elb\'ee and Halevi
\cite[Corollary~2.4]{halevi-delbee}.

\section{Dp-finite $W_n$-domains}
Before focusing on Noetherian rings, there are a few more results we
can say about dp-finite rings of finite breadth.
\begin{proposition} \label{um}
  Let $R$ be a dp-finite $W_n$-domain with fraction field $K$.
  Suppose $R \ne K$.  Let $\tilde{R}$ be the integral closure of $R$.
  \begin{enumerate}
  \item $\tilde{R}$ is a henselian valuation ring, definable in the
    structure $R$ as a subset of $\Frac(R)$.
  \item The structure $(K,+,\cdot,R,\tilde{R})$ has the same dp-rank
    as $R$.
  \end{enumerate}
  Recall that $R$ is local with maximal ideal $\mm$.  Let
  $\tilde{\mm}$ denote the maximal ideal of $\tilde{R}$.
  \begin{enumerate}[resume]
  \item The inclusion $R \to \tilde{R}$ is a local homomorphism,
    meaning that $\mm = R \cap \tilde{\mm}$.
  \item $R$ and $\tilde{R}$ have the same residue characteristic.
  \item $R/\mm$ is finite iff $\tilde{R}/\tilde{\mm}$ is finite.  More generally, $\tilde{R}/\tilde{\mm}$ is a finite extension of $R/\mm$.
  \end{enumerate}
\end{proposition}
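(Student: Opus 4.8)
The plan is to derive (1) from the structure theory already developed, to obtain (3)--(5) from elementary commutative algebra once (1) is in hand, and to prove (2) by a dp-rank computation that exploits part (1).

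For (1): since $R$ is a $W_n$-domain, Lemma~\ref{decompose} gives that $\tilde{R}$ is definable in $R^{\eq}$ and equals $\bigcap_{i=1}^{m}\Oo_i$ for definable valuation rings $\Oo_i$, which we may take pairwise incomparable (discard any $\Oo_j$ containing some $\Oo_i$). For $i \neq j$ the bi-valued field $(K,\Oo_i,\Oo_j)$ is interpretable in $R$, hence dp-finite, so by Fact~\ref{df-hens}(2) $\Oo_i$ and $\Oo_j$ are comparable; incomparability then forces $m=1$, so $\tilde{R}=\Oo_1$ is a definable valuation ring. Being a definable subset of an imaginary sort over the dp-finite ring $R$, $\tilde{R}$ is dp-finite, so Fact~\ref{df-hens}(1) makes it henselian. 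Finally $\tilde{R}\neq K$: since $R\neq K$, $R$ is not a field, so (being local by Theorem~\ref{dft}) it has a nonzero maximal ideal $\mm$, and Chevalley's extension theorem yields a valuation ring $\Oo \supseteq R$ whose maximal ideal contracts to $\mm$, whence $\Oo$ is nontrivial and $\tilde{R}\subseteq\Oo\subsetneq K$.

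For (3): $R$ is local by Theorem~\ref{dft} and $\tilde{R}$ is local by (1). If $x\in R$ lies in $\tilde{R}^{\times}$, then $x^{-1}\in\tilde{R}$ satisfies $x^{-d}+c_{d-1}x^{-(d-1)}+\cdots+c_0=0$ with $c_i\in R$; multiplying by $x^d$ gives $1=-x(c_{d-1}+c_{d-2}x+\cdots+c_0 x^{d-1})\in xR$, so $x\in R^{\times}$. Hence $R^{\times}=R\cap\tilde{R}^{\times}$, and complementing inside $R$ gives $\mm=R\cap\tilde{\mm}$. Then (4) is immediate, as (3) exhibits $R/\mm$ as a subfield of $\tilde{R}/\tilde{\mm}$. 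For (5): view $k:=\tilde{R}/\tilde{\mm}$ as a module over $F:=R/\mm$ through this embedding; the $R$-action on $k$ factors through $F$ because $\mm\subseteq\tilde{\mm}$, so by Remark~\ref{base2}, Fact~\ref{additive}(1), and \cite[Lemma~2.4]{prdf5},
\begin{equation*}
  \dim_F(k) = \br_F(k) = \br_R(\tilde{R}/\tilde{\mm}) \le \br_R(\tilde{R}) \le \br_R(K) = \br(R) \le n,
\end{equation*}
so $\tilde{R}/\tilde{\mm}$ is a finite extension of $R/\mm$; in particular one is finite exactly when the other is.

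For (2): by Lemma~\ref{decompose}, $\tilde{R}$ is definable from $R$, so $M:=(K,+,\cdot,R,\tilde{R})$ is interdefinable with $(K,+,\cdot,R)$, which is interpretable in $R$ via $K=\Frac(R)$; thus $M$ is dp-finite, and since the home-sort copy of $R$ inside $M$ carries only its pure ring structure (traces of $R^{\eq}$-definable sets on the home sort are $R$-definable), $\dpr(M)\ge\dpr(R)$. For the reverse inequality use (1): $\tilde{R}$ is a nontrivial valuation ring on $K$, so $K=\tilde{R}\cup\{\,x\in K : x^{-1}\in\tilde{\mm}\setminus\{0\}\,\}$, and the second set is in $M$-definable bijection (via $x\mapsto x^{-1}$) with $\tilde{\mm}\setminus\{0\}\subseteq\tilde{R}$; hence $\dpr(M)=\dpr_M(K)=\dpr_M(\tilde{R})$, and it remains to show $\dpr_M(\tilde{R})\le\dpr(R)$. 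I expect this last inequality --- that passing to the integral closure does not raise dp-rank --- to be the main obstacle. One route is to compute $\dpr(\tilde{R})$ as a henselian valued field: if $\dpr(\tilde{R})\le\dpr(k)+\dpr(\Gamma)$ for its residue field $k$ and value group $\Gamma$, then part (5) gives $\dpr(k)=\dpr(R/\mm)\le\dpr(R)$, and it would remain to bound $\dpr(\Gamma)$ against the value structure that $\tilde{R}$ induces on $R$. Making such an additivity bound precise outside the equicharacteristic-$0$ case, and controlling $\Gamma$, is the delicate point; alternatively one could aim to prove directly that the fraction field of a dp-finite domain has the same dp-rank as the domain, which would also yield (2).
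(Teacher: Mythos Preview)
Your arguments for parts (1), (3), (4), and (5) are correct and essentially coincide with the paper's.

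For part (2) there is a genuine gap. You correctly observe that $M := (K,+,\cdot,R,\tilde{R})$ is interdefinable with $(K,+,\cdot,R)$, since $\tilde{R}$ is definable from $R$; so the whole problem reduces to showing $\dpr(K,+,\cdot,R) = \dpr(R)$ --- precisely the ``alternative'' you flag in your final sentence. The paper does not prove this from scratch; it simply invokes \cite[Lemma~10.25]{prdf4}, which asserts exactly this equality for any dp-finite integral domain $R$. With that lemma in hand, (2) is immediate.

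Your proposed workaround --- passing to $\dpr_M(\tilde{R})$ via the covering $K = \tilde{R} \cup (\tilde{\mm}\setminus\{0\})^{-1}$ and then attempting an AKE-style bound $\dpr(\tilde{R}) \le \dpr(k) + \dpr(\Gamma)$ --- is both unnecessary and, as you yourself acknowledge, incomplete. The additivity inequality is delicate outside equicharacteristic~$0$; you would still need to bound $\dpr(\Gamma)$ in terms of $R$; and in any case $\dpr_M(\tilde{R})$ may exceed the dp-rank of $\tilde{R}$ as a pure valued field, because $M$ carries the predicate $R$ as extra structure on $\tilde{R}$. The clean route is the one you identified at the end: cite the known fact that passing to the fraction field of a dp-finite domain does not increase dp-rank.
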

\begin{proof}
  \begin{enumerate}
  \item Lemma~\ref{decompose} shows that $\tilde{R}$ is a finite intersection of definable valuation rings $\bigcap_{i=1}^m \Oo_i$.  By Fact~\ref{df-hens}(1), the $\Oo_i$ are henselian.  By Fact~\ref{df-hens}(2), the $\Oo_i$ are pairwise comparable, so we can take $m=1$.
  \item By \cite[Proposition~2.8(2)]{halevi-delbee}, $(K,+,\cdot,R)$ has the same
    dp-rank as $R$.  As $\tilde{R}$ is definable in this
    structure, the expansion $(K,+,\cdot,R,\tilde{R})$ has the same
    dp-rank. 
  \item Suppose $x \in R$.  We must show $x \in \mm \Leftrightarrow x
    \in \tilde{\mm}$.  One direction is easy: if $x \notin \mm$ then
    $x \in R^\times \subseteq \tilde{R}^\times$, so $x \notin
    \tilde{\mm}$.  Conversely, suppose $x \notin \tilde{\mm}$ but $x
    \in \mm$.  As $\tilde{R}$ is a valuation ring, $1/x \in
    \tilde{R}$, meaning that $1/x$ is integral over $R$.  Therefore
    there are $c_0, \ldots, c_{n-1} \in R$ such that $x^{-n} = c_0 +
    c_1 x^{-1} + \cdots + c_{n-1} x^{1-n}$.  Then $1 = c_0x^n + \cdots
    + c_{n-1}x \in \mm$, which is absurd.
  \item Part (3) gives a field embedding $R/\mm \to
    \tilde{R}/\tilde{\mm}$, so these must have the same
    characteristic.
  \item The $(\Leftarrow)$ direction follows from the field embedding
    $R/\mm\to \tilde{R}/\tilde{\mm}$.  Conversely, suppose $R/\mm$ is
    finite.  Note that $\tilde{R}$ and $\tilde{\mm}$ are
    $\tilde{R}$-submodules of $K$, hence $R$-submodules of $K$ as $R
    \subseteq \tilde{R}$.  By \cite[Lemma~2.4]{prdf5}, $\br_R(K) =
    \br_R(R) < \infty$, and so
    \begin{equation*}
      \br_R(\tilde{R}/\tilde{\mm}) \le \br_R(\tilde{R}) \le \br_R(K) =
      \br(R) < \infty
    \end{equation*}
    by Fact~\ref{additive}.  We claim that $\mm$ annihilates the
    $R$-module $\tilde{R}/\tilde{\mm}$.  Indeed, if $x \in \mm$ and $y
    \in \tilde{R}$, then $x \in \tilde{\mm}$ by part (3), and so $xy
    \in \tilde{\mm} \tilde{R} = \tilde{\mm}$.  Therefore
    $\tilde{R}/\tilde{\mm}$ is an $R/\mm$-module, and
    \begin{equation*}
      \dim_{R/\mm}(\tilde{R}/\tilde{\mm}) =
      \br_{R/\mm}(\tilde{R}/\tilde{\mm}) =
      \br_R(\tilde{R}/\tilde{\mm}) < \infty
    \end{equation*}
    by Remarks~\ref{base2} and \ref{vec-br}.  Therefore if $R/\mm$ is finite then so is
    $\tilde{R}/\tilde{\mm}$. \qedhere
  \end{enumerate}
\end{proof}
\begin{corollary}
  Let $R$ be a dp-finite $W_n$-domain with $R \ne \Frac(R)$.  If
  $\Frac(R)$ has positive characteristic, then the residue field of
  $R$ is infinite and $\{x^p - x : x \in R\} = R$.
\end{corollary}
\begin{proof}
  Let $\tilde{R}$ be the integral closure.  Then $\tilde{R}$ is a
  positive characteristic NIP valuation ring, so it has infinite
  residue field by~\cite[Proposition~5.3]{KSW}.  By Proposition~\ref{um},
  $R$ has infinite residue field, too.  The surjectivity of the
  Artin-Schreier map $R \to R$ then follows by
  \cite[Theorem~3.4]{fpcase}.
\end{proof}

\section{NIP Noetherian rings}
\begin{lemma} \label{semisimple}
  Let $R$ be a ring.  Then $\br(R) \ge n$ iff there are ideals $I \le
  I' \le R$ such that the $R$-module $I'/I$ is a direct sum of $n$
  simple $R$-modules.
\end{lemma}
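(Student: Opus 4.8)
The proof splits into the two implications, of which only the forward direction has content; it refines the basic characterization of breadth recalled in Section~\ref{sec:breadth}. The backward implication is immediate: a simple module is by definition non-trivial, so if there are ideals $I \le I' \le R$ with $I'/I$ a direct sum of $n$ simple $R$-modules, then $I'/I$ is in particular a direct sum of $n$ non-trivial $R$-modules, whence $\br(R) = \br_R(R) \ge n$.

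For the forward implication, suppose $\br(R) \ge n$. By the definition of breadth of a module, there are ideals $I \le I' \le R$ and submodules $N_1, \ldots, N_n$ of $I'/I$ with $I'/I = N_1 \oplus \cdots \oplus N_n$ (internal direct sum), each $N_j$ non-trivial. The plan is to replace each $N_j$ by a simple subquotient and then reassemble. Applying Remark~\ref{infinite} to the non-trivial module $N_j$, I obtain submodules $N_j'' < N_j' \le N_j$ with $N_j'/N_j''$ simple; concretely, one picks $0 \ne x_j \in N_j$, sets $N_j' = R x_j$, and lets $N_j''$ be maximal among submodules of $N_j'$ omitting $x_j$.

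Now set $A = N_1' \oplus \cdots \oplus N_n'$ and $B = N_1'' \oplus \cdots \oplus N_n''$, both viewed as submodules of $I'/I$; the sums remain direct because they take place inside the direct sum $\bigoplus_j N_j$. Since $N_j'' \subseteq N_j'$ we have $B \subseteq A$, and the product of the quotient maps $N_j' \to N_j'/N_j''$ identifies $A/B$ with $\bigoplus_{j=1}^n (N_j'/N_j'')$, a direct sum of $n$ simple $R$-modules. Finally, by the correspondence between submodules of $I'/I$ and ideals of $R$ lying between $I$ and $I'$, write $A = J'/I$ and $B = J/I$ with $I \le J \le J' \le I'$; then $J'/J \cong A/B$ is a direct sum of $n$ simple $R$-modules, as desired.

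I do not anticipate a serious obstacle. The only points requiring a little care are the existence of the maximal submodule $N_j''$ — the routine Zorn's lemma argument already carried out in Remark~\ref{infinite} — and the verification that passing to the submodules $A$ and $B$ of the direct sum $\bigoplus_j N_j$ keeps the sums direct and produces the stated quotient; everything else is bookkeeping with the submodule/ideal correspondence.
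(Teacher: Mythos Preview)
Your proof is correct and follows essentially the same approach as the paper: both start from the decomposition $I'/I = N_1 \oplus \cdots \oplus N_n$ into non-trivial summands, replace each $N_j$ by a simple subquotient, and observe that the resulting subquotient of $I'/I$ corresponds to a pair of ideals $J \le J'$. The paper's proof is simply more terse, leaving implicit the construction of $A$ and $B$ that you spell out.
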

\begin{proof}
  The condition certainly implies $\br(R) \ge n$.  Conversely, supose
  $\br(R) \ge n$.  Then there are ideals $I_0 \le I'_0 \le R$ such
  that the $R$-module $I'_0/I_0$ is a direct sum of $n$ non-trivial
  $R$-modules.
  \begin{equation*}
    I'_0/I_0 \cong \bigoplus_{i=1}^n N_i.
  \end{equation*}
  Every non-trivial $R$-module has a simple subquotient (see
  Remark~\ref{infinite}), so there are submodules $N'_i \le N''_i \le
  N_i$ such that $N''_i/N'_i$ is simple.  Then
  \begin{equation*}
    \left. \bigoplus_{i=1}^n N''_i \middle/ \bigoplus_{i=1}^n N'_i \right. \cong \bigoplus_{i=1}^n N''_i/N'_i
  \end{equation*}
  is a subquotient of $I'_0/I_0$ isomorphic to a direct sum of $n$
  simple $R$-modules.  A subquotient of $I'_0/I_0$ has the form $I'/I$
  for some ideals $I, I'$ with $I_0 \le I \le I' \le I'_0$.
\end{proof}
If $R$ is a Noetherian ring, let $\dim(R)$ denote the Krull dimension
of $R$.  Recall
that if $R$ is a Noetherian ring of Krull dimension 0, then $R$ has
finite length (as an $R$-module) \cite[Theorem~2.14]{eisenbud}.
A
ring $R$ is \emph{semilocal} if it has finitely many maximal ideals
\cite[Exercise~4.13]{eisenbud}.
\begin{lemma} \label{noether-mess}
  Let $R$ be a semilocal Noetherian domain with $\dim(R) \le 1$.  Then
  $\br(R)$ is finite.
\end{lemma}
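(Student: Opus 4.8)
The plan is to reduce the problem to a statement about the fraction field $K$ viewed as an $R$-module, since $\br(R) = \br_R(K)$ by \cite[Lemma~2.4]{prdf5}. By Fact~\ref{multival} (or rather the combination of $\dim(R) \le 1$ and semilocality), $R$ has finitely many maximal ideals $\mm_1,\ldots,\mm_r$, and the localizations $R_{\mm_i}$ are the relevant local rings. Because breadth is additive along short exact sequences (Fact~\ref{additive}) and $\br_R(K) \le \br_{R'}(K)$ for subrings $R' \subseteq R$ (Fact~\ref{base}), I expect to be able to compute $\br_R(K)$ locally: there should be a bound of the shape $\br_R(K) \le \sum_{i=1}^r \br_{R_{\mm_i}}(K)$, or at least $\br_R(K)$ finite iff each $\br_{R_{\mm_i}}(K)$ is finite. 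So it suffices to treat the case where $R$ is a local Noetherian domain of dimension $\le 1$; the dimension-$0$ case is a field, where $K = R$ and $\br(R) = 1$, so the real content is a one-dimensional Noetherian local domain.

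For a one-dimensional Noetherian local domain $(R,\mm)$, the idea is to exploit the Krull intersection theorem: $\bigcap_n \mm^n = 0$. Pick a nonzero $t \in \mm$. Then $K = \bigcup_n t^{-n} R$ is a countable directed union of copies of $R$, and the successive quotients $t^{-(n+1)} R / t^{-n} R \cong R/tR$ are finite-length $R$-modules (since $\dim R/tR = 0$ and $R/tR$ is Noetherian). The submodule lattice of $K$ is therefore controlled by the submodule lattices of these finite-length pieces, and breadth of a finite-length module over a local ring is finite — indeed bounded by its length. The key point is that breadth does not accumulate along the filtration: a chain of $n+1$ pairwise-incomparable submodules of $K$ would, after intersecting with a single $t^{-N} R$ for $N$ large enough, still be pairwise incomparable inside a finite-length module, forcing $\br(R/t^N R) \ge n$ — but I need a uniform bound on $\br(R/t^N R)$ independent of $N$. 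This is where I would use that $R$ is a $W_m$-ring-like structure locally, or more directly: $\br_R(t^{-N}R) = \br_R(R) = \br(R)$ for every $N$, so if $\br(R) \ge n+1$ the witnessing configuration already lives inside some $t^{-N} R \cong R$, and I want to push it down to a finite quotient. Concretely, given ideals $I < I'$ with $I'/I$ a sum of $n+1$ simple modules (Lemma~\ref{semisimple}), each simple module is $R/\mm$, so $\mm I' \subseteq I$ and $I'/I$ is an $R/\mm$-vector space of dimension $n+1$; since $I'$ is finitely generated and $I' / \mm I'$ is finite-dimensional, we get $\br(R) \le \dim_{R/\mm}(I'/\mm I')$ for the "widest" finitely generated ideal — but ideals can require arbitrarily many generators a priori, so this is exactly the obstacle.

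The resolution I anticipate: every ideal $I$ of a one-dimensional Noetherian domain contains a nonzero element $t$, and $I/tR$ is a submodule of the finite-length module $R/tR$, hence $I$ is generated by $t$ together with lifts of generators of $I/tR$; thus $\mu(I) \le \ell(R/tR) + 1$ where $\mu$ denotes minimal number of generators — but $\ell(R/tR)$ depends on $t$. Better: fix one nonzero $t \in \mm$ once and for all; for any ideal $I$, either $I = R$ or $t^k \in I$ for some $k$ (since $R/I$ has a power of $\mm$ as annihilator, using $\dim \le 1$), so $I \supseteq t^k R$ and $I/t^k R \le R/t^k R$, giving $\mu(I) \le \ell(R/t^k R) + 1$. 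Still $k$-dependent. The honest fix is to bound $\br(R)$ by $\br_R(R/tR) + \br_R(tR)$ via $0 \to tR \to R \to R/tR \to 0$, note $\br_R(tR) = \br_R(R) = \br(R)$, which is circular — so instead I should bound $\br_R(K)$ using the filtration $0 \subseteq R \subseteq t^{-1}R \subseteq \cdots$ with all graded pieces isomorphic to $R/tR$, and invoke that breadth of a module is at most the supremum over finite sub-filtrations of the sum of breadths of graded pieces *when the filtration is exhaustive and the pieces have uniformly bounded breadth* — but a sum of $n$ copies of $R/tR$ has breadth $n \cdot \br(R/tR)$, which is unbounded. So the filtration argument must instead show that a powerset-of-$(n+1)$ sublattice of $\Sub_R(K)$ lives inside some $t^{-N}R$, and $t^{-N}R \cong R$ gives $\br(R) = \br_R(t^{-N}R) \ge n+1$ — meaning $\br_R(K) = \br(R)$ automatically and we've learned nothing new. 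I therefore expect the actual proof to instead bound $\br(R/tR)$ directly (it is a finite-length module, breadth $\le$ length) and separately argue that $\Sub_R(K)$ has the same breadth as $\Sub_R(R/tR)$ does, via the exactness and the fact that $\Sub_R(R)$ embeds into $\Sub_R(R/tR) \times (\text{something})$; **the main obstacle is precisely establishing that the breadth of $R$ as a module over itself is controlled by a single finite-length quotient $R/tR$**, i.e., that the "width" of the ideal lattice of a $1$-dimensional Noetherian local domain is bounded by $\ell(R/tR)$ for a well-chosen $t$, and I would attack this by showing every ideal is determined up to finite ambiguity by its image in $R/tR$ together with a bounded amount of data, making the ideal lattice a bounded-breadth extension of the finite lattice $\Sub_R(R/tR)$.
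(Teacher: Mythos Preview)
Your proposal is not a proof; it is an exploration that circles the right target without landing on it. You correctly isolate the goal---show $\br(R) \le \ell_R(R/tR)$ for some fixed nonzero $t$ in the Jacobson radical---and you correctly invoke Lemma~\ref{semisimple} to reduce to: if $I'/I$ is a sum of $n+1$ simple modules then $tI' \subseteq I$, hence $n+1 \le \ell(I'/tI')$. What you fail to supply is the uniform bound $\ell(I'/tI') = \ell(R/tR)$ for \emph{every} nonzero ideal $I'$. This is the one substantive step in the paper's argument, and it is short: since $R$ is a domain, multiplication by $t$ is an $R$-module isomorphism $R/I' \xrightarrow{\sim} tR/tI'$, so
\[
\ell(R/I') + \ell(I'/tI') \;=\; \ell(R/tI') \;=\; \ell(R/tR) + \ell(tR/tI') \;=\; \ell(R/tR) + \ell(R/I'),
\]
and cancelling $\ell(R/I')$ (finite because $\dim R/I' = 0$) gives $\ell(I'/tI') = \ell(R/tR)$. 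With this in hand your argument closes immediately: $n+1 \le \ell(I'/tI') = \ell(R/tR)$, so $\br(R) \le \ell(R/tR)$.

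Two further remarks. First, your proposed reduction to the local case via a bound of the form $\br_R(K) \le \sum_i \br_{R_{\mm_i}}(K)$ is neither proved nor needed: the paper works directly in the semilocal ring by choosing $t$ in the Jacobson radical $\bigcap_i \mm_i$ (nonzero because $R$ is a domain), so that $t$ annihilates every simple $R$-module simultaneously. Second, the detours through filtrations of $K$, Krull intersection, and bounds on $\mu(I)$ are all red herrings; the entire content is the single length identity above, which exploits only that $t$ is a non-zerodivisor and that $R/tR$ is Artinian.
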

\begin{proof}
  If $\dim(R) = 0$, then $R$ is a field, and $\br(R) = 1$.  Suppose
  $\dim(R) = 1$.  Let $\mm_1,\ldots,\mm_n$ be the maximal ideals of
  $R$.  The only other prime ideal is the minimal prime ideal $0$.

  Note that if $I$ is any ideal in $R$, then the poset $\Spec R/I$ is
  isomorphic to the subposet $\{\pp \in \Spec R : \pp \supseteq I\}$
  of $\Spec R$.  In the case when $I \ne 0$, this poset is a subset of
  the antichain $\{\mm_1,\ldots,\mm_n\}$, so $\dim(R/I) = 0$ and $R/I$
  has finite length.  Letting $\ell_R(M)$ denote the length of an $R$-module $M$, we have
  \begin{equation}
    I \ne 0 \implies \ell_R(R/I) = \ell_{R/I}(R/I) < \infty. \tag{$\ast$}
  \end{equation}
  Let $J = \bigcap_{i = 1}^n \mm_i$ (the Jacobson radical); it is
  non-zero because an intersection of two non-zero ideals is non-zero
  in a domain.  Take non-zero $x \in J$.  By ($\ast$), the $R$-module
  $R/xR$ has finite length $m$.  We claim $\br(R) \le m$.  Otherwise
  by Remark~\ref{semisimple} there are ideals $I' < I \le R$ such that
  $I/I'$ is a direct sum of $m+1$ simple $R$-modules.  Recall that any simple $R$-module $M$ is isomorphic to $R/\mm$ for some maximal ideal $\mm$, and so the annihilator $\Ann(M) = \mm$ is a maximal ideal.  Because $x$ is
  in every maximal ideal, it annihilates any simple $R$-module, and
  therefore it annihilates $I/I'$, meaning that $xI \le I'$.  Then
  \begin{equation*}
    m+1 = \ell_R(I/I') \le \ell_R(I/xI).
  \end{equation*}
  By ($\ast$), the modules $R/I$ and $R/xI$ have finite length, and so
  \begin{equation*}
    \ell_R(R/I) + \ell_R(I/xI) = \ell_R(R/xI) = \ell_R(R/xR) + \ell_R(xR/xI) < \infty
  \end{equation*}
  by the additivity of length (Fact~\ref{JH}).
  Multiplication by $x$ gives an isomorphism of $R$-modules from $R/I$
  to $xR/xI$, so $\ell_R(R/I) = \ell_R(xR/xI)$.  Canceling this from both
  sides, we see $\ell_R(I/xI) = \ell_R(R/xR) = m$.  Then
  \begin{equation*}
    m + 1 = \ell_R(I/I') \le \ell_R(I/xI) = m,
  \end{equation*}
  a contradiction.
\end{proof}
\begin{lemma} \label{mess2}
  Let $R$ be a semilocal Noetherian ring with $\dim(R) \le 1$.  Then
  $\br(R)$ is finite.
\end{lemma}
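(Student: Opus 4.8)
The idea is to reduce from the semilocal Noetherian ring $R$ with $\dim(R) \le 1$ to the integral-domain case already handled in Lemma~\ref{noether-mess}, by peeling off the nilradical and then splitting into irreducible components. First I would pass to $R_{\mathrm{red}} = R/\operatorname{nil}(R)$. Since $\br(R/I) \le \br(R)$ always (Remark~\ref{closed2}) but we want the reverse inequality, I would instead argue that the nilradical $N$ is a finitely generated (hence nilpotent) ideal, say $N^k = 0$, and use the finite filtration $R \supseteq N \supseteq N^2 \supseteq \cdots \supseteq N^k = 0$. Each quotient $N^j/N^{j+1}$ is a finitely generated module over $R_{\mathrm{red}}$, so by Fact~\ref{additive} it suffices to bound the breadth of each $N^j/N^{j+1}$ as an $R$-module, which equals its breadth as an $R_{\mathrm{red}}$-module by Remark~\ref{base2}; and breadth of a finitely generated module over a ring is controlled by the breadth of the ring together with the number of generators (via Fact~\ref{additive} applied to a surjection $R_{\mathrm{red}}^d \twoheadrightarrow N^j/N^{j+1}$ and then $\br_{R_{\mathrm{red}}}(R_{\mathrm{red}}^d) = d\cdot\br(R_{\mathrm{red}})$).

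So the problem reduces to showing $\br(R_{\mathrm{red}}) < \infty$, i.e.\ we may assume $R$ is reduced. A reduced Noetherian ring has finitely many minimal primes $\pp_1, \ldots, \pp_r$ with $\bigcap_i \pp_i = 0$, and the natural map $R \hookrightarrow \prod_{i=1}^r R/\pp_i$ is injective. Each $R/\pp_i$ is a Noetherian domain; it is semilocal because $\operatorname{Spec}(R/\pp_i)$ is a closed subspace of the semilocal space $\operatorname{Spec}(R)$ (finitely many maximal ideals), and it has Krull dimension $\le 1$. Hence $\br(R/\pp_i) < \infty$ by Lemma~\ref{noether-mess}. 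Now $R$ embeds as an $R$-submodule into $M := \prod_i R/\pp_i = \bigoplus_i R/\pp_i$, so by Fact~\ref{additive}(1) we get $\br_R(R) \le \br_R(M) = \sum_i \br_R(R/\pp_i)$ (using Fact~\ref{additive}(3) for the finite direct sum), and each $\br_R(R/\pp_i) = \br_{R/\pp_i}(R/\pp_i) = \br(R/\pp_i) < \infty$ by Remark~\ref{base2}. This gives $\br(R) < \infty$.

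The one point requiring care — and what I expect to be the main obstacle — is the claim $\br_R(R/\pp_i) = \br_{R/\pp_i}(R/\pp_i)$. As an $R$-module, $R/\pp_i$ has submodule lattice equal to the lattice of $R$-submodules, which is exactly the lattice of $R/\pp_i$-submodules (= ideals of $R/\pp_i$), since scalar multiplication factors through $R/\pp_i$; this is precisely Remark~\ref{base2} with $I = \pp_i$. So in fact there is no obstacle here, just bookkeeping. The only genuinely substantive input is Lemma~\ref{noether-mess}, and the reduction steps are all applications of Facts~\ref{additive} and Remarks~\ref{base2}, \ref{closed2}, \ref{closed1}. I would write the proof in the order: (i) reduce to reduced $R$ via the nilpotent filtration; (ii) embed reduced $R$ into $\bigoplus_i R/\pp_i$; (iii) check each $R/\pp_i$ is a semilocal Noetherian domain of dimension $\le 1$ and invoke Lemma~\ref{noether-mess}; (iv) conclude by additivity of breadth.
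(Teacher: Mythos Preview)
Your proof is correct and follows essentially the same approach as the paper's: reduce to the domain case (Lemma~\ref{noether-mess}) via a filtration whose successive quotients are finitely generated modules over the domains $R/\pp_i$, then conclude by additivity of breadth (Fact~\ref{additive}) and Remark~\ref{base2}. The only difference is organizational: you split the reduction into two steps (filter by powers of the nilradical to reduce to $R_{\mathrm{red}}$, then embed the reduced ring into $\bigoplus_i R/\pp_i$), whereas the paper uses a single filtration $I_j = \qq_1 \cdots \qq_j$ by products of minimal primes so that each $I_{j-1}/I_j$ is already a finitely generated $R/\qq_j$-module.
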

\begin{proof}
  Recall that Noetherian rings have only finitely many minimal prime ideals \cite[Section~3.1]{eisenbud}.
  Let $\pp_1, \ldots, \pp_n$ denote the finitely many minimal prime
  ideals of $R$.  By Lemma~\ref{noether-mess}, the ring $R/\pp_i$ has
  finite breadth.  By Fact~\ref{additive}, it follows that any
  finitely-generated $R/\pp_i$-module has finite breadth as an
  $R/\pp_i$-module, or equivalently, as an $R$-module.

  By a basic fact in commutative algebra
  \cite[Corollary~2.12]{eisenbud}, the set of nilpotent elements
  (i.e., the nilradical $\sqrt{0}$) is exactly $\bigcap_{i = 1}^n
  \pp_i$.  In particular, the product $I = \pp_1 \pp_2 \cdots \pp_n$
  is contained in the nilradical---every element of $I$ is nilpotent.
  As $I$ is finitely-generated, it follows that $I^k = 0$ for some
  $k$.  Thus $\pp_1^k \pp_2^k \cdots \pp_n^k = 0$ for some $k$.
  Therefore there is a finite sequence $\qq_1,\ldots,\qq_m$ such that
  each $\qq_i$ is a minimal prime, and $\qq_1 \qq_2 \cdots \qq_m = 0$.
  Let $I_j = \qq_1 \cdots \qq_j$ for $0 \le j \le m$.  This gives a
  descending sequence of ideals
  \begin{equation*}
    R = I_0 \ge I_1 \ge \cdots \ge I_N = 0.
  \end{equation*}
  For each $j > 0$, $I_{j-1}/I_j = I_{j-1}/\qq_j I_{j-1}$ is an
  $R/\qq_j$-module, finitely generated by Noetherianity.  By the first
  paragraph of the proof, $I_{j-1}/I_j$ has finite breadth as an
  $R$-module.  By Fact~\ref{additive} again, $R = I_0/I_N$ has finite
  breadth, bounded by $\sum_{j = 1}^N \br(I_j/I_{j-1})$.
\end{proof}
\begin{remark}
  Lemma~\ref{mess2} is related to---and possibly a consequence of---work of I. S. Cohen.
  In \cite[Section~4]{cohen-rm}, Cohen says that a ring $R$ has ``rank
  $k$'' if every ideal $I \subseteq R$ is generated by $k$ or fewer
  generators.  This condition is at first glance similar to the
  condition $\br(R) \le k$, which means that any finite set of
  generators for an ideal can be shrunk to a set of size $k$.  In
  general the two conditions are orthogonal.  For example, $\Zz$ has
  infinite breadth, but ``rank 1'' in the sense of Cohen.  Conversely,
  if $R$ is a non-discrete valuation ring, then $R$ has breadth 1, but
  does not have ``finite rank'' in the sense of Cohen, as $R$ is
  non-Noetherian.  When $R$ is a Noetherian ring, $\br(R) \le k$
  implies Cohen's ``rank $k$''.  In fact, when $(R,\mm)$ is a local
  Noetherian ring, ``rank $k$'' is equivalent to ``$\br(R) \le k$''
  because Nakayama's lemma makes both conditions equivalent to the
  condition
  \begin{equation*}
    \dim_{R/\mm}(I/\mm I) \le k \text{ for any ideal } I \subseteq R.
  \end{equation*}
  Cohen shows that if $R$ is a local Noetherian domain, then $R$ has
  finite rank if and only if $\dim(R) \le 1$
  \cite[Theorem~9]{cohen-rm} (see also \cite[Corollary~1 to
    Theorem~1]{cohen-rm}).  This gives another proof of
  Lemma~\ref{mess2}, at least in the case when $R$ is a local domain.
  (The direction of Cohen's result that is relevant to
  Lemma~\ref{mess2} may go back to \cite{akizuki}.)
\end{remark}

\begin{proposition} \label{phew}
  Let $R$ be a Noetherian ring.  The following are equivalent:
  \begin{enumerate}
  \item $\br(R) < \infty$.
  \item The poset $\Spec R$ has finite width.
  \item $\Spec R$ is finite.
  \item $R$ is semilocal with $\dim(R) \le 1$.
  \end{enumerate}
\end{proposition}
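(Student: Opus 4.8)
The plan is to prove the loop of implications $(4)\Rightarrow(1)\Rightarrow(2)\Rightarrow(3)\Rightarrow(4)$, which suffices since it passes through all four statements. Two of these are immediate: $(4)\Rightarrow(1)$ is exactly Lemma~\ref{mess2}, and $(1)\Rightarrow(2)$ is Remark~\ref{width-rem}, since $\br(R)<\infty$ means $\br(R)\le n$ for some $n$, whence $\Spec R$ has width at most $n$. So the real content lies in $(2)\Rightarrow(3)$ and $(3)\Rightarrow(4)$.

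For $(2)\Rightarrow(3)$, suppose $\Spec R$ has finite width $m$. By Dilworth's theorem in the infinite-poset form noted in the footnote, $\Spec R$ is a union of $m$ chains, so it is enough to show that every chain $C\subseteq\Spec R$ is finite. Since $R$ is Noetherian, the ascending chain condition gives a maximal element $\pp^\ast$ in the family of ideals $\{\pp:\pp\in C\}$; as $C$ is totally ordered by inclusion, $\pp^\ast$ is the largest member of $C$, so every member of $C$ is contained in $\pp^\ast$. Because $\pp^\ast$ is finitely generated, it has finite height (Krull's height theorem), and then $C$, being a chain of primes descending from $\pp^\ast$, has at most $\height(\pp^\ast)+1$ elements. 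Hence $\Spec R$ is finite.

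For $(3)\Rightarrow(4)$: if $\Spec R$ is finite then $R$ has only finitely many maximal ideals, so $R$ is semilocal, and it remains to bound the dimension. If $\dim(R)\ge 2$, pick a chain $\pp_0\subsetneq\pp_1\subsetneq\pp_2$ of primes and pass to the Noetherian domain $A=R/\pp_0$, which has Krull dimension $\ge 2$. The key input is that such an $A$ has infinitely many height-one primes; since distinct height-one primes are pairwise incomparable, these pull back to infinitely many primes of $R$ containing $\pp_0$, contradicting finiteness of $\Spec R$. Therefore $\dim(R)\le 1$, which is $(4)$, and the loop closes.

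The main obstacle — the only step that is not purely formal — is the fact that a Noetherian domain $A$ with $\dim(A)\ge 2$ has infinitely many height-one primes. I would prove this by contradiction using prime avoidance: if $\pp_1,\dots,\pp_k$ were a complete list of the height-one primes and $\qq$ is a prime admitting a length-two chain below it, then $\qq\not\subseteq\pp_i$ for every $i$ (as $\height\pp_i=1$ while $\height\qq\ge 2$), so $\qq\not\subseteq\bigcup_i\pp_i$ and we may choose $y\in\qq$ with $y\notin\pp_i$ for all $i$; then $\qq$ contains some prime $\pp$ minimal over $(y)$, which has height $1$ by Krull's Hauptidealsatz (and is nonzero since $y\ne 0$ in the domain $A$), yet $y\in\pp$, so $\pp$ is a height-one prime not in $\{\pp_1,\dots,\pp_k\}$ — a contradiction. (Alternatively, this may be cited as a standard fact.) Everything else in the argument is bookkeeping.
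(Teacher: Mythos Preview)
Your proof is correct and follows essentially the same route as the paper's: the same cycle $(1)\Rightarrow(2)\Rightarrow(3)\Rightarrow(4)\Rightarrow(1)$, with $(4)\Rightarrow(1)$ via Lemma~\ref{mess2}, $(1)\Rightarrow(2)$ via Remark~\ref{width-rem}, and $(2)\Rightarrow(3)$ via Dilworth plus finiteness of chains in $\Spec R$ (the paper phrases the latter as ACC${}+{}$DCC on primes, you phrase it as ACC to get a top and then bounded height below the top---these are the same idea). The only visible difference is in $(3)\Rightarrow(4)$: the paper simply cites Kaplansky's theorem that a prime strictly between two given primes in a Noetherian ring forces infinitely many such primes, whereas you supply a self-contained proof of the needed special case (pass to the domain $R/\pp_0$ and produce infinitely many height-one primes via prime avoidance and the Hauptidealsatz); this is exactly how one proves Kaplansky's result, so the arguments coincide in substance.
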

\begin{proof}
  \begin{description}
  \item[$(1)\implies(2).$] Remark~\ref{width-rem}.
  \item[$(2)\implies(3).$] By Dilworth's theorem~\cite{dilworth}, $\Spec R$ is a
    finite union of chains.  In a Noetherian ring, the poset $\Spec R$
    satisfies the ascending chain condition (by Noetherianity) and the
    descending chain condition (by dimension theory; see~\cite[Corollary~10.3]{eisenbud}).  Therefore each chain in the union is finite.
  \item[$(3)\implies(4).$] The fact that $R$ is semilocal is clear.
    Suppose for the sake of contradiction that $\dim(R) \ge 2$.  Take
    three prime ideals $\pp_1 \subsetneq \pp_2 \subsetneq \pp_3$.  By
    \cite[Theorem~144]{kaprings}, the fact that there is a prime ideal
    between $\pp_1$ and $\pp_3$ implies that there are infinitely many
    such prime ideals.  This contradicts (3).
  \item[$(4)\implies(1).$] Lemma~\ref{mess2}.    \qedhere
  \end{description}
\end{proof}
\begin{corollary} \label{hah}
  Let $R$ be an NIP Noetherian ring.  Then $R$ has finite breadth, $R$
  is semilocal with $\dim(R) \le 1$, and $\Spec R$ is finite.
\end{corollary}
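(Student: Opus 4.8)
The plan is to deduce this immediately from Proposition~\ref{phew} together with a known structural fact about NIP rings. The only nontrivial input is that an NIP Noetherian ring has finite width in $\Spec R$; once we have that, Proposition~\ref{phew} gives everything. So the real content is: \emph{an NIP domain cannot have two incomparable prime ideals whose quotients are both infinite}, combined with the observation that in a Noetherian ring, infinitely many residue fields or large antichains force a problematic configuration somewhere.

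Here is the sequence of steps I would carry out. First, reduce to showing $\Spec R$ has finite width, since then $(2)\implies(4)\implies(1)$ in Proposition~\ref{phew} finishes the proof. Second, suppose toward a contradiction that $\Spec R$ contains an infinite antichain $\pp_1, \pp_2, \ldots$. In a Noetherian ring of bounded dimension this is only possible among primes of some fixed height $h \geq 1$; passing to $R/\pp$ for a suitable prime $\pp$ of height $h-1$ contained in infinitely many of the $\pp_i$ (or just localizing and quotienting appropriately), we may assume $R$ is a domain and the $\pp_i$ are infinitely many pairwise-incomparable height-one primes. Third, invoke Proposition~\ref{gh-reform}: the reformulations there are conditional on Conjecture~\ref{ghens}, which we do \emph{not} have, so instead I would use the unconditional input. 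The cleanest route: quotient $R$ by an externally definable ideal to extract a problematic ring, contradicting that the class of NIP rings is very good — but wait, that is exactly what we are trying to establish.

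Let me instead follow the structure actually available: the class of NIP rings is \emph{good} (established in Section~3), and Lemma~3.12-style arguments inside Proposition~\ref{blackbox} show that an NIP domain has no two incomparable primes with infinite quotients \emph{only if we already know the class is very good}. Since we cannot assume that, the honest argument must go through \textbf{Corollary~\ref{overrings}} and \textbf{Fact~\ref{henscor}}-type reasoning — but those require HC. The way out, and what I expect the author does, is to note that the antichain case is handled by commutative algebra plus NIP directly: if $R$ is Noetherian and $\Spec R$ has an infinite antichain of height-one primes $\pp_1,\pp_2,\ldots$ in a domain, pick nonzero $x \in \pp_1 \cap \pp_2$; then $R/xR$ is a nonzero Artinian quotient, yet it surjects onto $R/\pp_i$ for infinitely many $i$ with $\pp_i$ distinct minimal over $xR$ — contradicting that an Artinian ring has finitely many maximal ideals. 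This shows \emph{every} Noetherian domain of positive dimension has finite width in $\Spec R$ with no NIP hypothesis needed at all, except we still need $\dim R \leq 1$, i.e.\ no chain $\pp_1 \subsetneq \pp_2 \subsetneq \pp_3$.

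So the genuinely NIP-dependent step is ruling out $\dim(R) \geq 2$, equivalently ruling out an infinite antichain of intermediate primes — and this is where I would apply the earlier machinery: if $\pp_1 \subsetneq \pp_2 \subsetneq \pp_3$ in an NIP Noetherian ring, then by \cite[Theorem~144]{kaprings} there are infinitely many primes strictly between $\pp_1$ and $\pp_3$, forming an infinite antichain, and localizing/quotienting to $(R_{\pp_3}/\pp_1 R_{\pp_3})$ produces an NIP Noetherian \emph{local} domain $R'$ of dimension $\geq 1$ whose spectrum still has an infinite antichain of height-one primes; now the residue field of $R'$ may be finite, but infinitely many of these height-one primes $\qq_j$ have $R'/\qq_j$ of dimension $0$ hence Artinian — and here I would argue that infinitely many pairwise-incomparable such primes contradict finite breadth, which fails in an NIP Noetherian ring by an inductive application of Remark~\ref{width-rem} once breadth is shown finite. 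The cleanest packaging, and the one I would write, is: \emph{prove directly that an NIP domain has $\Spec R$ linearly ordered or at least of finite width using the good-class argument in a form that does not need very-goodness} — specifically, \cite[Lemma~3.12]{fpcase} shows two primes with infinite quotients are comparable in \emph{any} NIP domain, and the problematic-ring obstruction only enters for maximal ideals; combining ``height-one primes with infinite quotient are comparable'' (Lemma 3.12 argument, unconditional) with the Artinian-quotient argument above (for the ones with finite quotient) yields finite width, hence Proposition~\ref{phew} applies. The main obstacle is threading this needle: getting finite width of $\Spec R$ using only the unconditional consequences of NIP available before the very-good-class machinery kicks in, rather than circularly invoking Proposition~\ref{blackbox}.
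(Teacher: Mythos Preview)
Your overall strategy is correct: reduce to showing that $\Spec R$ has finite width, and then Proposition~\ref{phew} gives all the conclusions at once. That is exactly the structure of the paper's proof.

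The gap is in how you establish finite width. The paper's proof is one line: it cites \cite[Proposition~2.1]{halevi-delbee}, which states unconditionally that \emph{any} NIP ring has $\Spec R$ of finite width. No Noetherianity, no henselianity conjecture, no good-class machinery is needed for this input. You never invoke this result, and instead try to manufacture finite width from the tools internal to the paper. None of your routes closes cleanly: the Lemma~3.12 argument from \cite{fpcase} (two primes with infinite quotient are comparable) genuinely requires the ``no problematic rings'' hypothesis---see step~(1) in the proof of Proposition~\ref{blackbox}, where this replaces \cite[Lemma~3.9]{fpcase}---so it is not available unconditionally for NIP rings. Your Artinian-quotient argument handles antichains of height-one primes in a domain but, as you note yourself, does not by itself rule out $\dim(R)\ge 2$; and your attempt to handle that case loops back to needing finite width or finite breadth in a quotient, which is circular. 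The ``threading the needle'' you describe at the end is precisely what the d'Elb\'ee--Halevi result does for you in one stroke.

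In short: you identified the right reduction but missed the black-box input that makes it trivial. Replace your second and third paragraphs with ``By \cite[Proposition~2.1]{halevi-delbee}, $\Spec R$ has finite width; now apply Proposition~\ref{phew}'' and the proof is complete.
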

\begin{proof}
  If $R$ is an NIP ring, then $\Spec R$ has finite width by Fact~\ref{fin-wid}.
\end{proof}
\begin{lemma} \label{where}
  Let $R$ be an NIP Noetherian domain with fraction field $K$.  Let
  $\tilde{R}$ be the integral closure of $R$ (in $K$).
  \begin{enumerate}
  \item $\tilde{R}$ is a finite intersection of discrete valuation
    rings, each of which is definable in $R$.
  \item If $R$ is dp-finite or the henselianity conjecture holds, then
    $\tilde{R}$ is a henselian DVR.
  \end{enumerate}
\end{lemma}
\begin{proof}
  By Corollary~\ref{hah}, $R$ has finite breadth and $\dim(R) \le 1$.
  \begin{enumerate}
  \item By Lemma~\ref{decompose}, $\tilde{R}$ is an intersection of
    definable valuation rings $\Oo_1 \cap \cdots \cap \Oo_n$.  Because $\dim(R) \le 1$, the Krull-Akizuki
    theorem \cite[Theorem~11.13]{eisenbud} shows that all overrings of $R$ are
    Noetherian and 1-dimensional.  Therefore each valuation ring
    $\Oo_i$ is Noetherian, and therefore a discrete valuation ring.
  \item By Proposition~\ref{um}(1) or Lemma~\ref{decompose2},
    $\tilde{R}$ is a henselian valuation ring. \qedhere
  \end{enumerate}
\end{proof}



\begin{theorem} \label{charzero}
  Let $R$ be an NIP Noetherian domain.  If $R$ is not a field, then
  $\Frac(R)$ has characteristic 0.
\end{theorem}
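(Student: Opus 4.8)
The plan is to reduce to the integral closure $\tilde{R}$, which by Lemma~\ref{where}(1) is a finite intersection of definable discrete valuation rings on $K = \Frac(R)$, and then run a characteristic-$p$ contradiction argument. Suppose toward a contradiction that $\characteristic(K) = p > 0$. Since $R$ is not a field, neither is $\tilde{R}$ (it has the same fraction field and $\dim(\tilde{R}) = 1$ by Krull–Akizuki), so at least one of the valuation rings $\Oo_i$ in the decomposition $\tilde{R} = \bigcap_{i=1}^n \Oo_i$ is non-trivial; call it $\Oo$, with maximal ideal $\mm_{\Oo}$ and value group $\Gamma$. Because $\Oo$ is a DVR, $\Gamma \cong \Zz$.

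The key tension is between discreteness of $\Gamma$ and the divisibility forced by NIP in positive characteristic. First I would observe that $\Oo$ is definable in $R^{\Sh}$ (Corollary~\ref{overrings}, or directly from Lemma~\ref{decompose}/Lemma~\ref{where}), so $(K, \Oo)$ is NIP (taking a suitable reduct of $R^{\Sh}$, which is NIP by Remark~\ref{shelah-rem}). Since $\characteristic(K) = p > 0$, the residue field $k = \Oo/\mm_{\Oo}$ also has characteristic $p$. Now I want to invoke the structure theory: in the dp-finite case, Proposition~\ref{um} already gives that the residue field of $R$ (hence, via the local embedding, enough control of $\tilde R$) together with Fact~\ref{trickle} forces $\Gamma$ to be $p$-divisible — but a non-trivial subgroup of $\Zz$ is never $p$-divisible, contradiction. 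For the general NIP (non-dp-finite) case, I would instead use that an NIP valued field of characteristic $p$ must be, e.g., Artin–Schreier closed or have $p$-divisible value group by the relevant known results on NIP fields of positive characteristic (the henselianity-conjecture case, item~(1) in the introduction, shows $\Oo$ is henselian; and a henselian non-trivially valued NIP field of characteristic $p$ with residue characteristic $p$ cannot have value group $\cong \Zz$). Either way, $\Gamma \cong \Zz$ is incompatible with the $p$-divisibility / Artin–Schreier constraints.

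So the skeleton is: (i) if $R$ is a field we are done vacuously — assume not; (ii) pass to $\tilde R$ and extract a non-trivial definable DVR $\Oo$ on $K$; (iii) note $(K,\Oo)$ is NIP and $\characteristic(K) = p > 0$; (iv) derive $p$-divisibility of the value group of $\Oo$ from Fact~\ref{trickle} (using that dp-finite or strongly dependent applies, or the positive-characteristic NIP field results cited in the introduction); (v) contradict $\Gamma \cong \Zz$. The main obstacle is step (iv): I need the right input theorem that says a non-trivially valued NIP field of equal characteristic $p$ — or one whose residue field has characteristic $p$ — has $p$-divisible value group. In the dp-finite regime this is exactly Fact~\ref{trickle} (all three cases have $p$-divisible value group or a $p$-divisible interval that, for a $\Zz$-valued group with $v(p)=0$ in equal characteristic, is all of $\Gamma$), so that case is clean; the potential gap is supplying the analogous statement in full NIP generality, which is where the henselianity conjecture (or its known instances, namely the positive-characteristic case) is doing the work. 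I expect the write-up to split along exactly that line: a short dp-finite argument via Proposition~\ref{um} and Fact~\ref{trickle}, and a parallel NIP argument invoking \cite[Theorem~2.8]{prdf1a} for henselianity plus the structure of positively-characteristic NIP valued fields.
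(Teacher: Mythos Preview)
Your approach is essentially the paper's: extract a non-trivial definable DVR on $K$ from the integral closure via Lemma~\ref{where}(1), then contradict positive characteristic. The paper finishes in one line by citing \cite{KSW} (Kaplan--Scanlon--Wagner: infinite NIP fields of characteristic $p$ are Artin--Schreier closed, hence admit no $\Zz$-valued valuation), which is exactly the ``$p$-divisibility of the value group'' input you were searching for in step~(iv); your dp-finite/general-NIP case split and the henselianity detour are therefore unnecessary.
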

\begin{proof}
  By Lemma~\ref{where}, the integral closure $\tilde{R}$ is an
  intersection of discrete valuation rings $\bigcap_{i = 1}^m \Oo_i$,
  with each $\Oo_i$ definable in the structure $R$.  If $R \ne K$,
  then $\tilde{R} \ne K$.  (Take a non-zero maximal ideal $\mm \in
  \Spec R$.  Take $x \in \mm$.  Then $1/x$ is not integral over $R$,
  or else $x^{-n} + c_{n-1} x^{1-n} + \cdots + c_1 x^{-1} + c_0 = 0$
  for some $c_i$ in $R$.  This would imply $-1 = c_{n-1}x + c_{n-2}x^2
  + \cdots + c_0x^n \in \mm$, which is absurd.)  Therefore some
  $\Oo_i$ is not $K$.\footnote{More precisely, since $K$ is not a DVR,
  the claim is that $m > 0$.}  Then there is at least one DVR $\Oo$ on
  $K$, definable in the structure $R$.  By
  \cite[Proposition~5.4]{KSW}, there are no NIP discrete valuation
  rings of positive characteristic, so $\characteristic(K) =
  0$.
\end{proof}
Now assume the henselianity conjecture  (Conjecture~\ref{hens}).
\begin{theorem}[Assuming HC] \label{xyz}
  Let $R$ be an NIP Noetherian ring.
  \begin{enumerate}
  \item $R$ is a finite product of henselian local rings.
  \item If $R$ is an integral domain, then $R$ is a henselian local
    domain.
  \end{enumerate}
\end{theorem}
\begin{proof}
  This follows directly from Theorem~\ref{nip-w}, since we know
  $\br(R) < \infty$ by Corollary~\ref{hah}.
\end{proof}

\subsection{Rings elementarily equivalent to NIP Noetherian rings}
Drop the assumption of the henselianity conjecture.
\begin{proposition} \label{udef}
  Let $R$ be an NIP Noetherian ring.  The family of ideals in $R$ is
  uniformly definable.
\end{proposition}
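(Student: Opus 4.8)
The plan is to reduce the statement to the (already established) finiteness of $\br(R)$ and then use the tautological formula expressing membership in the ideal generated by a fixed number of elements. First I would invoke Corollary~\ref{hah}: since $R$ is an NIP Noetherian ring, $\br(R)$ is finite; put $n := \br(R)$.

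Next I would record the following consequence of $\br(R) \le n$ together with Noetherianity: every ideal of $R$ is generated by at most $n$ elements. Indeed, unwinding the definition of breadth for the module $M = R$, for any $x_1, \dots, x_{n+1} \in R$ there is an index $i$ with $Rx_1 + \cdots + Rx_{n+1} = Rx_1 + \cdots + \widehat{Rx_i} + \cdots + Rx_{n+1}$; so in any finite list of generators of an ideal $I$, once the list has length greater than $n$ some member is redundant and may be deleted. Since $R$ is Noetherian, $I$ has \emph{some} finite generating set, and iterating the deletion reduces it to a generating set of size $\le n$ (pad with zeros to obtain exactly $n$ generators).

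Finally I would take $\varphi(x; y_1, \dots, y_n)$ to be the formula $\exists z_1 \cdots \exists z_n \ (x = z_1 y_1 + \cdots + z_n y_n)$. For each tuple $\bar b \in R^n$ the set $\varphi(R; \bar b)$ is exactly the ideal $(b_1, \dots, b_n)$, hence is always an ideal; and by the previous paragraph every ideal of $R$ occurs among the sets $\varphi(R; \bar b)$. Thus the family of ideals of $R$ is precisely $\{\varphi(R; \bar b) : \bar b \in R^n\}$, which is the asserted uniform definability.

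I do not expect any real obstacle here: the one substantive ingredient is $\br(R) < \infty$, which is Corollary~\ref{hah} (resting in turn on $\Spec R$ having finite width for NIP rings, via \cite[Proposition~2.1]{halevi-delbee}, together with Lemma~\ref{mess2}); everything else is the elementary observation that ``lies in the ideal generated by a bounded number of parameters'' is a first-order condition, plus the generator-reduction argument for finitely generated ideals in a ring of finite breadth.
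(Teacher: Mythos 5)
Your proposal is correct and matches the paper's argument: the paper likewise takes $n = \br(R) < \infty$ (from Corollary~\ref{hah}), notes that Noetherianity gives finite generation, and concludes that every ideal is generated by at most $n$ elements, which is exactly your generator-reduction step plus the tautological defining formula. No differences worth noting.
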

\begin{proof}
  Let $n = \br(R) < \infty$.  By Noetherianity, every ideal is
  finitely generated.  Then every ideal is generated by $n$ or fewer
  generators.
\end{proof}
\begin{proposition}
  Let $R_0$ be an NIP Noetherian ring, and suppose $R \equiv R_0$.
  \begin{enumerate}
  \item $R$ has finite breadth $n < \infty$.
  \item An ideal $I \subseteq R$ is definable if and only if it is
    finitely generated if and only if it is generated by $n$ or fewer
    generators.
  \item Any ideal in $R$ is externally definable.
  \item The family $\mathcal{I}$ of definable ideals in $R$ is uniformly definable.
  \item If $\mathcal{F}$ is a non-empty definable family of ideals,
    then $\mathcal{F}$ has a maximal element.
  \end{enumerate}
\end{proposition}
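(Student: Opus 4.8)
The plan is to reduce each clause to transferring a first-order sentence from $R_0$ to $R$. Although Noetherianity is not elementary, the consequences of it that we need for $R_0$ — finite breadth, uniform definability of the \emph{full} ideal lattice, and existence of maximal elements in definable families of ideals — are each expressible by first-order sentences, so they hold in $R$ as well. Throughout, set $n = \br(R_0)$, which is finite by Corollary~\ref{hah}, and let $\psi(x;y_1,\dots,y_n)$ be the formula $\exists r_1\cdots\exists r_n\,(x = r_1 y_1 + \cdots + r_n y_n)$, so that $\psi(R;\bar a)$ is always the ideal $Ra_1 + \cdots + Ra_n$.

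Part (1) is immediate: both $\br(-)\le n$ and $\br(-)\ge n$ are first-order — the former via the generator characterization of breadth, the latter by asserting the existence of $x_1,\dots,x_n$ with $x_i\notin\sum_{j\ne i}Rx_j$ for every $i$ — so $\br(R)=\br(R_0)=n$. Part (3) then follows from Lemma~\ref{extdef} applied with the ambient structure, the definable ring, and the definable module all taken to be $R$: since $\br_R(R)=n<\infty$, every $R$-submodule of $R$, i.e.\ every ideal of $R$, is externally definable. Granting part (2), part (4) is also immediate: by (2) the definable ideals of $R$ are precisely the ideals $\psi(R;\bar a)$ with $\bar a\in R^n$, so $\psi$ uniformly defines $\mathcal I$.

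The crux is part (2), in particular the implication that a definable ideal of $R$ is finitely generated, since $R$ itself need not be Noetherian. The easy half: an ideal generated by $\le n$ elements is visibly of the form $\psi(R;\bar a)$, hence definable, and any finitely generated ideal shrinks to $\le n$ generators because $\br(R)\le n$. For the converse I would use Proposition~\ref{udef}: in $R_0$ every ideal is generated by $\le n$ elements, so for each formula $\phi(x;y)$ (writing $y$ for the parameter slots) the sentence
\[
\forall y\ \bigl[\,\phi(R;y)\text{ is an ideal}\ \longrightarrow\ \exists\bar z\ \forall x\ \bigl(\phi(x;y)\leftrightarrow\psi(x;\bar z)\bigr)\,\bigr]
\]
holds in $R_0$ — the condition that $\phi(R;y)$ be an ideal is first-order in $y$ — hence it holds in $R$. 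So every definable ideal of $R$ equals some $\psi(R;\bar a)$, i.e.\ is generated by $\le n$ elements; with the easy half this gives the triple equivalence.

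Part (5) uses the same template. Present $\mathcal F$ as $\{\phi(R;b):b\in B\}$ for a definable set $B$ with $\phi(R;b)$ an ideal for every $b\in B$; one then transfers the (first-order) sentence which says that whenever $B\ne\emptyset$ and $\phi(R;b)$ is an ideal for all $b\in B$, there is $b_0\in B$ such that no $\phi(R;b)$ with $b\in B$ strictly contains $\phi(R;b_0)$. This sentence (with parameters universally quantified away and then reinstantiated at the parameters defining $\mathcal F$) holds in $R_0$ because a non-empty family of ideals in a Noetherian ring has a maximal element, hence it holds in $R$ and yields the desired maximal element. The only genuine obstacle in the whole proposition is this kind of transfer argument, and the key point that unlocks it is that the uniform definability of the \emph{entire} ideal lattice of the Noetherian ring $R_0$ (Proposition~\ref{udef}) is itself first-order; once that is in hand, parts (2), (4), and (5) are formal.
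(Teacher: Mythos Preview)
Your proof is correct and follows essentially the same approach as the paper: both reduce (1) to the first-order expressibility of $\br(-)\le n$, derive (3) from Lemma~\ref{extdef}, prove (2) by transferring the schema ``if $\phi(R;y)$ is an ideal then it is $n$-generated'' from $R_0$, and get (4) immediately from (2). For (5) the paper transfers the slightly narrower schema ``any non-empty definable subfamily of $\mathcal{I}_n$ has a maximal element'' and then invokes (2) to see that every definable family of ideals in $R$ is already a subfamily of $\mathcal{I}_n$; your version transfers the broader schema directly, which is a harmless simplification since Noetherianity of $R_0$ validates it just as well.
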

\begin{proof}
  The condition ``$\br(R) \le n$'' is expressed by a first-order
  sentence, so breadth is preserved under elementary equivalence.
  Therefore (1) holds by Corollary~\ref{hah}.  Then (3) holds by
  Lemma~\ref{extdef}.

  For (2), the fact that $\br(R) = n$ shows that every finitely
  generated ideal is generated by $n$ elements.  Finitely generated
  ideals are certainly definable.  Conversely, suppose $I \subseteq R$
  is definable, defined by a formula $\phi(x,b)$ with parameters $b
  \in I$.  As in the proof of Proposition~\ref{udef}, the ring $R_0$
  satisfies the following property:
  \begin{quote}
    For any $b'$, if $\phi(R_0,b')$ is an ideal, then $\phi(R_0,b')$
    is generated by $n$ elements.
  \end{quote}
  This is expressed by a first-order sentence, so it holds in $R$, and
  therefore $\phi(R,b)$ is generated by $n$ elements.

  Once (2) is known, (4) follows immediately.  For part (5), note that
  the ring $R_0$ satisfies the following property, by Noetherianity:
  \begin{quote}
    Let $\mathcal{I}_n$ be the family of ideals generated by $n$
    elements.  If $\mathcal{F}$ is a non-empty definable subfamily of
    $\mathcal{I}_n$, then $\mathcal{F}$ has a maximal element.
  \end{quote}
  This can be expressed by an axiom schema, so it holds in $R$.  But
  in $R$, every definable ideal is generated by $n$ elements, by part
  (2).  Therefore (5) holds.
\end{proof}
\begin{proposition}
  Let $R$ be a Noetherian ring and $R_0$ be an elementary
  substructure.  Then $R_0$ is Noetherian.
\end{proposition}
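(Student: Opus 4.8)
The plan is to show directly that every ideal of $R_0$ is finitely generated, using elementarity to transfer a solvability statement from $R$ down to $R_0$. The point to keep in mind is that Noetherianity is not itself a first-order property, so it cannot simply be said to pass to elementary substructures; instead one exploits that the \emph{linear} equation expressing membership in a given finitely generated ideal \emph{is} first-order.

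First I would fix an arbitrary ideal $I \subseteq R_0$ and pass to the ideal $IR$ that it generates in $R$. Since $R$ is Noetherian, $IR$ is finitely generated as an ideal of $R$; writing each of finitely many generators as a finite $R$-linear combination of elements of $I$ and collecting those elements, one sees that in fact $IR = (b_1,\ldots,b_m)R$ for some $b_1,\ldots,b_m \in I$. This is the only slightly delicate bookkeeping step, and it is routine.

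Next I claim $I = (b_1,\ldots,b_m)R_0$. The inclusion $\supseteq$ is immediate since each $b_i$ lies in $I$. For $\subseteq$, take $c \in I$. Then $c \in IR = (b_1,\ldots,b_m)R$, so the formula $\exists y_1 \cdots \exists y_m\,(c = y_1 b_1 + \cdots + y_m b_m)$ --- a first-order formula with parameters $c, b_1, \ldots, b_m$, all of which lie in $R_0$ --- holds in $R$. Since $R_0 \preceq R$, it holds in $R_0$, so $c$ is an $R_0$-linear combination of $b_1,\ldots,b_m$, i.e.\ $c \in (b_1,\ldots,b_m)R_0$. Hence $I$ is finitely generated, and since $I$ was arbitrary, $R_0$ is Noetherian.

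There is no genuine obstacle here; the only things to be careful about are the reduction in the second step (choosing generators of $IR$ that already lie in $I$, rather than merely in $IR$) and the observation that, once a finite generating set is fixed, ideal membership is expressible by a first-order formula over $R_0$, so that $R_0 \preceq R$ can be applied.
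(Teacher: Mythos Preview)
Your proof is correct and uses the same core idea as the paper's: membership in a fixed finitely generated ideal is expressible by a first-order formula with parameters in $R_0$, so it transfers down from $R$ via elementarity. The only cosmetic difference is that the paper argues by contradiction through the ascending chain condition (building a strictly increasing sequence $(x_1) \subsetneq (x_1,x_2) \subsetneq \cdots$ in $R_0$ and deriving a contradiction when the extended chain stabilizes in $R$), whereas you argue directly by extending $I$ to $IR$, choosing generators inside $I$, and pulling them back; both routes hinge on the identical elementarity step.
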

\begin{proof}
  Otherwise, take an ideal $I \subseteq R_0$ that is not finitely
  generated.  Recursively choose $x_1, x_2, \ldots \in I$ by taking
  $x_n \in I \setminus (x_1,\ldots,x_{n-1}) \ne \varnothing$.  Then we
  have an ascending chain of finitely-generated ideals in $R_0$:
  \begin{equation*}
    0 \subsetneq (x_1) \subsetneq (x_1,x_2) \subsetneq \cdots
  \end{equation*}
  In the ring $R$, the chain
  \begin{equation*}
    0 \subseteq (x_1)_R \subseteq (x_1,x_2)_R \subseteq \cdots
  \end{equation*}
  cannot be strictly ascending, because $R$ is Noetherian.  Therefore
  there is some $n$ such that $x_n \in (x_1,\ldots,x_{n-1})_R = Rx_1 +
  \cdots + Rx_{n-1}$.  As $R_0 \preceq R$, we also have $x_n \in
  R_0x_1 + \cdots + R_0x_{n-1}$, contradicting the choice of $x_n$.
\end{proof}

\section{Dp-finite Noetherian domains}
\begin{fact}[{part of \cite[Theorem~2.11]{dpm2}}] \label{trickle}
  Let $(\Oo,\mm)$ be a dp-finite (or strongly dependent) non-trivially
  valued field with residue field $k$ and value group $\Gamma$.
  Suppose $\characteristic(k) = p > 0$.  Then one of three things
  happens:
  \begin{enumerate}
  \item $\characteristic(K) = 0$, $k$ is finite, and the interval
    $[-v(p),v(p)] \subseteq \Gamma$ is finite.
  \item $\characteristic(K) = 0$, $k$ is infinite, and the interval
    $[-v(p),v(p)] \subseteq \Gamma$ is $p$-divisible.
  \item $\characteristic(K) = p$, $k$ is infinite, and $\Gamma$ is
    $p$-divisible.
  \end{enumerate}
\end{fact}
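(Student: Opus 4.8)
The plan is to handle the equicharacteristic and mixed-characteristic cases separately. In the equicharacteristic case everything follows quickly from known structure theory of strongly dependent fields of characteristic $p$; the real work is the mixed-characteristic case, where one reduces by a coarsening to a valued field in which $p$ is archimedean and then studies the interpretable characteristic-$p$ ring $\Oo/p\Oo$. Throughout I would use that strong dependence (and dp-finiteness) passes to the pure ordered abelian group $\Gamma$, to the pure residue field $k$, and to the value group and residue field of any coarsening of $v$, since all of these are interpretable in $(\Oo,\mm)$ (or its Shelah expansion).

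Suppose first $\characteristic(K) = p$. Then $K$ is an infinite strongly dependent field of characteristic $p$, hence Artin--Schreier closed (Kaplan--Scanlon--Wagner) and perfect (structure theory of strongly dependent fields of positive characteristic). Since $k$ is a residue field of $K$, it inherits Artin--Schreier closure --- a lift of any Artin--Schreier equation over $k$ is solvable over $K$, and one reduces --- so $k$ cannot be finite. Similarly $K$ perfect forces $v(x) \in p\Gamma$ for every $x \in K^{\times}$, so $\Gamma = p\Gamma$. This is case (3).

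Now suppose $\characteristic(K) = 0$, so $\gamma_0 := v(p) > 0$. Let $\Delta \leq \Gamma$ be the convex hull of $\Zz \gamma_0$ and let $w$ be the coarsening of $v$ with value group $\Gamma / \Delta$; then $w$ has residue characteristic $0$, the valuation $\bar v$ that $v$ induces on the residue field $F := \Oo_w / \mm_w$ has value group $\Delta$ and residue field $k$, and $\gamma_0$ is archimedean in $\Delta$. As $(F, \bar v)$ is interpretable in $(\Oo, \mm)$ it is again strongly dependent, and the interval $[-v(p), v(p)]$ is the same computed in $\Gamma$ or in $\Delta$ (it is convex and $\gamma_0 \in \Delta$). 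The object to analyze is now the interpretable characteristic-$p$ ring $A := \Oo_{\bar v} / p \Oo_{\bar v}$: it is strongly dependent, and its internal valuation-theoretic structure involves $[0, v(p)) \cap \Delta$ together with $k$. When $k$ is finite, $A$ is finite if and only if $[-v(p), v(p)]$ is finite, and one must show an infinite such $A$ is not strongly dependent. When $k$ is infinite, $A$ is always infinite, but one must show $A$ is strongly dependent only when $[-v(p), v(p)]$ is $p$-divisible, via an Artin--Schreier-type computation inside $A$ that uses the infinite residue field. These yield cases (1) and (2).

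The main obstacle, I expect, is exactly this analysis of $A = \Oo_{\bar v}/p\Oo_{\bar v}$. Transfer of strong dependence to $\Gamma$, $k$, and coarsenings is formal, and the equicharacteristic case is a short deduction; but excluding an infinite $A$ (when $k$ is finite) or a non-$p$-divisible value monoid of $A$ (when $k$ is infinite) appears to require building an explicit independence configuration --- an inp-pattern of unbounded depth --- from the interaction of $A$'s value monoid with its residue field, rather than quoting a black box. A secondary point is the routine but fiddly bookkeeping that $[-v(p), v(p)]$ and its $p$-divisibility genuinely survive the reduction through $w$ to $(F, \bar v)$ and then to $A$.
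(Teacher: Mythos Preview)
The paper does not prove this statement at all: it is stated as a \emph{Fact}, explicitly attributed to \cite[Theorem~2.11]{dpm2}, and used as a black box. So there is no ``paper's own proof'' to compare against; your proposal is an attempt to reprove a result the paper simply imports from the literature.

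That said, your outline is broadly the right shape and matches the strategy one finds in the cited source: handle the equicharacteristic case via Kaplan--Scanlon--Wagner, and in mixed characteristic coarsen so that $v(p)$ becomes archimedean and then analyze the interpretable characteristic-$p$ ring $\Oo/p\Oo$. Two points deserve more care. First, in the equicharacteristic case you assert that $K$ is perfect ``by structure theory,'' and deduce $\Gamma = p\Gamma$ from that. Strong dependence alone gives $[K:K^p]<\infty$, not perfection; the $p$-divisibility of $\Gamma$ is obtained in \cite{dpm2} by a direct inp-pattern argument (or via the analysis of $\Oo/p\Oo$ in the equicharacteristic setting as well), not by first establishing perfection of $K$. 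Second, your description of the mixed-characteristic endgame is honest about where the real work lies --- building an explicit inp-pattern inside $\Oo/p\Oo$ --- but stops short of doing it; that construction is the substance of the cited theorem, so your proposal is more of a roadmap than a proof.
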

Let $R$ be a dp-finite Noetherian domain with $R \ne \Frac(R) = K$.
Let $\tilde{R}$ be the integral closure of $R$.  Here is what we know
so far:
\begin{itemize}
\item $\characteristic(K) = 0$ (Theorem~\ref{charzero}).
\item $R$ is a henselian local domain (Theorem~\ref{dft}).
\item $R$ has Krull dimension 1 and finite breadth (Corollary~\ref{hah}).
\item $\tilde{R}$ is a definable henselian DVR
  (Lemma~\ref{where}).
\item The structure $(K,+,\cdot,R,\tilde{R})$ has the same dp-rank as
  $R$ (Proposition~\ref{um}(2)).  
\item $R$ and $\tilde{R}$ have the same residue characteristic (Proposition~\ref{um}(4)).
\item The residue field of $R$ is finite iff the residue field of
  $\tilde{R}$ is finite (Proposition~\ref{um}(5)).
\end{itemize}
As $\tilde{R}$ is definable, it is also dp-finite.  By
Fact~\ref{trickle} and the fact that $\tilde{R}$ is a discrete
valuation ring, one of two things happens:
\begin{enumerate}
\item $\tilde{R}$ has residue characteristic 0.
\item $\tilde{R}$ has finite residue field.
\end{enumerate}
(Cases (2)--(3) of Fact~\ref{trickle} cannot happen because the valuation is
discrete.)  By parts (4) and (5) of Proposition~\ref{um}, the
properties of the residue field of $\tilde{R}$ transfer back to the
residue field of $R$.  Putting everything together, we get the
following trichotomy for dp-finite Noetherian domains:
\begin{theorem} \label{tricho1}
  Let $R$ be a dp-finite Noetherian domain with fraction field $K$.
  Recall that $R$ is local; let $k$ be the residue field.  Then one of
  three things happpens:
  \begin{enumerate}
  \item $R$ is a field.
  \item $R$ is not a field.  $K$ and $k$ have characteristic 0.
  \item $R$ is not a field.  $K$ has characteristic 0 and $k$ is
    finite.
  \end{enumerate}
\end{theorem}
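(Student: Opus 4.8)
The plan is to dispose of the field case immediately and then, assuming $R \ne K$, read off the trichotomy by transferring information from the integral closure $\tilde R$ back to $R$. First I would invoke Theorem~\ref{charzero} to get $\characteristic(K) = 0$, so that only cases (1)--(3) are possible and it remains only to decide whether the residue field $k$ has characteristic $0$ or is finite. Theorem~\ref{dft} gives that $R$ is a henselian local domain (so that ``the'' residue field makes sense), and Corollary~\ref{hah} gives $\br(R) < \infty$; hence $R$ is a dp-finite $W_n$-domain for some $n$ and both Proposition~\ref{um} and Lemma~\ref{where} apply.

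Next I would pass to $\tilde R$. Since $R \ne K$ we have $\tilde R \ne K$ (as argued in the proof of Theorem~\ref{charzero}), so by Lemma~\ref{where} $\tilde R$ is a definable henselian DVR; by Proposition~\ref{um}(2) the structure $(K,+,\cdot,R,\tilde R)$ has the same finite dp-rank as $R$, so $\tilde R$ is itself dp-finite and Fact~\ref{trickle} applies to it. If the residue field of $\tilde R$ has characteristic $0$ there is nothing more to check. If it has characteristic $p > 0$, I would argue that only case (1) of Fact~\ref{trickle} can occur: case (3) is excluded because $\characteristic(K) = 0$ by Theorem~\ref{charzero}, and case (2) is excluded because the value group of a DVR is $\Zz$, which is not $p$-divisible --- indeed $p \in \tilde\mm$ forces $v(p) \ge 1$, so $1 \in [-v(p), v(p)]$ is not $p$-divisible in $\Zz$. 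Thus in the positive residue characteristic case the residue field of $\tilde R$ is finite (its other conditions, e.g.\ finiteness of the interval $[-v(p),v(p)]$, are automatic and irrelevant, since we only need to land in case (1)).

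Finally I would transfer this back along $R \hookrightarrow \tilde R$. Proposition~\ref{um}(4) says $R$ and $\tilde R$ have the same residue characteristic. If that characteristic is $0$, then $k$ has characteristic $0$ and $\Frac(R)$ has characteristic $0$, so we are in case (2). If it is $p > 0$, then $\tilde R/\tilde\mm$ is finite by the previous paragraph, hence $k = R/\mm$ is finite by Proposition~\ref{um}(5), and together with $\characteristic(K) = 0$ this puts us in case (3). Since the argument is essentially an assembly of the earlier results, there is no serious obstacle; the one point that requires a moment's thought is the exclusion of cases (2) and (3) of Fact~\ref{trickle} for a discrete valuation, which is precisely where discreteness of $\tilde R$ and $\characteristic(K) = 0$ are used.
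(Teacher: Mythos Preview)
Your proposal is correct and follows essentially the same route as the paper's argument: assemble the earlier results (Theorem~\ref{charzero}, Theorem~\ref{dft}, Corollary~\ref{hah}, Lemma~\ref{where}, Proposition~\ref{um}) to reduce to analyzing the residue field of the dp-finite DVR $\tilde R$ via Fact~\ref{trickle}, then transfer back using Proposition~\ref{um}(4,5). Your exclusion of cases (2) and (3) of Fact~\ref{trickle} is slightly more explicit than the paper's (which simply notes that case (2) cannot occur because the valuation is discrete), but the substance is identical.
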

\begin{remark}
  Theorem~\ref{tricho1} does not hold for general NIP Noetherian
  rings.  Let $\Qp^{un}$ be the maximal unramified algebraic extension
  of $\Qp$.  Then $\Qp^{un}$ is NIP \cite[Corollaire~7.5]{belair}.  If
  $R$ is the ring of integers of $\Qp^{un}$, then $R$ does not satisfy
  Theorem~\ref{tricho1}, because the residue field $\Ff_p^{alg}$ is
  infinite with positive characteristic.  The field $\Qp^{un}$ and
  ring $R$ are not dp-finite, or even strongly dependent by
  Fact~\ref{trickle}.
\end{remark}
\subsection{The classification of dp-minimal Noetherian domains} \label{ssec8.1}
If $R$ is a ring and $A, B \subseteq R$, let
\begin{gather*}
  A + B = \{x + y : x \in A, ~ y \in B\} \\
  A - B = \{x - y : x \in A, ~ y \in B\}.
\end{gather*}
In particular, $A - B$ does not mean $A \setminus B$.  Recall the definition of ``definable V-topology'' from the start of Section~\ref{sec-moved-8.1-start}.
\begin{fact}[{\cite[Theorem~1.3]{dpm1}}] \label{cantop}
  Let $(K,+,\cdot,\ldots)$ be an expansion of a field.  Suppose $K$ is
  dp-minimal but not strongly minimal.  Then there is a non-discrete,
  Hausdorff, definable field topology $\tau_K$ on $K$ characterized by
  the fact that the following family is a neighborhood basis of $0$:
  \begin{equation*}
    \{X - X : X \subseteq K \text{ is definable and infinite}\}.
  \end{equation*}
  Moreover, $\tau_K$ is a V-topology.
\end{fact}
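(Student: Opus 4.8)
I would work in a sufficiently saturated elementary extension $\Uu \succeq K$ and consider the family
\[
  \mathcal B \;=\; \{\, X - X : X \subseteq K \text{ definable (with parameters) and infinite} \,\},
\]
whose members are subsets of $K$ each containing $0$.  Write $\mu = \bigcap_{B \in \mathcal B} B(\Uu)$ for the candidate ``monad'' of $0$.  The statement then splits into: (i) $\mathcal B$ is downward directed, hence a filter basis, and the filter it generates is the neighborhood filter of $0$ for a unique translation-invariant topology $\tau_K$ on $K$; (ii) $\tau_K$ is non-discrete and Hausdorff; (iii) $\tau_K$ is a field topology; (iv) $\tau_K$ is a V-topology.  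Conceptually $\mu$ is the set of $\tau_K$-infinitesimals, and (i) says precisely that $\mu$ is a single partial type cut out by the sets in $\mathcal B$; a secondary bookkeeping point hidden in (i) is that $\mathcal B$, rather than some strictly larger family, already is a neighborhood basis.

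\textbf{The key lemma and its reduction.}
Every combinatorial input is a version of the following \emph{directedness lemma}: if $X, Y \subseteq K$ are definable and infinite, there is a definable infinite $Z \subseteq K$ with $Z - Z \subseteq (X - X) \cap (Y - Y)$.  I would reduce this in one line to the statement ($\dagger$): \emph{for infinite definable $X, Y \subseteq K$ there is $c \in K$ with $X \cap (c + Y)$ infinite} --- for then $Z := X \cap (c + Y)$ works, since $Z \subseteq X$ gives $Z - Z \subseteq X - X$ and $Z \subseteq c + Y$ gives $Z - Z \subseteq Y - Y$.  Statement ($\dagger$) is the one genuinely model-theoretic point.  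For the field-topology axioms I would also need the routine observations that translates $X + x_0$, dilates $aX$ ($a \ne 0$), and ``inverses'' $X^{-1} = \{1/x : x \in X,\ x \ne 0\}$ of infinite definable sets are again infinite and definable --- so ($\dagger$) also supplies $c \ne 0$ with $X \cap cY$ infinite --- plus one refinement of the lemma: for infinite definable $X$ there is infinite definable $X'$ with $(X' - X') + (X' - X') \subseteq X - X$, for which it suffices, after translating so $0 \in X$, to find infinite definable $X' \subseteq X$ with $X' + X' - X' \subseteq X$; this too is an avatar of ($\dagger$).

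\textbf{Assembling the topology.}
Granting the directedness lemma and its refinement, $\mathcal B$ is a filter basis; $\tau_K$ is non-discrete since $X - X = \{0\}$ would force $|X| = 1$, so $\{0\}$ is not open.  For Hausdorffness I use that $K$ is \emph{not strongly minimal}: fix a definable $D \subseteq K$ that is infinite and co-infinite, so $K \setminus D$ is infinite; applying ($\dagger$) to $D$ and $K \setminus D$ yields an infinite definable $Z = D \cap (c + (K \setminus D))$, necessarily with $c \ne 0$, and $c \notin Z - Z$ (if $z_1 - z_2 = c$ with $z_i \in Z$ then $z_1 - c = z_2 \in D$, yet $z_1 \in c + (K \setminus D)$ forces $z_1 - c \notin D$).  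Dilating, $(a/c)\cdot Z$ is an infinite definable set with $a \notin (a/c)Z - (a/c)Z$ for any $a \ne 0$, so distinct points of $K$ are separated and $\bigcap \mathcal B = \{0\}$.  That $\tau_K$ is a \emph{field} topology then amounts to: for each $B = X - X$ there are members $C$ of the generated filter with $C + C \subseteq B$ (the refinement), with $aC \subseteq B$ for each fixed $a$ (dilation), with $C \cdot C \subseteq B$, and with $1 + C$ a set of units and $(1 + C)^{-1} \subseteq 1 + B$; the last two follow from the multiplicative form of ($\dagger$), using that $X^{-1}$ is infinite definable and that near $0$ the sets $X - X$ and $X^{-1} - X^{-1}$ control one another multiplicatively.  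Finally $\tau_K$ is a V-topology: by the Prestel--Ziegler characterization \cite[Section~3]{PZ} it suffices that $\tau_K$ be a non-trivial ring topology such that for every neighborhood $B$ of $0$ there is a neighborhood $C$ of $0$ with $xy \in C \implies x \in B$ or $y \in B$; unwinding $B = X - X$ and applying ($\dagger$) to $X$ and $X^{-1}$ produces such a $C$.

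\textbf{Main obstacle.}
The hard step is ($\dagger$): in a dp-minimal field, an infinite definable $X$ cannot be sliced by the translates $\{c + Y\}_{c \in K}$ of an infinite definable $Y$ with every slice $X \cap (c + Y)$ finite.  A naive dimension count fails: the natural two-dimensional witness $E = \{(c,x) : x \in X,\ x - c \in Y\}$ has infinite fibres over $X$ in the $x$-direction (each is a coset of $Y$) but, were all slices finite, finite fibres in the $c$-direction, and $\dpr$ is sub- but not super-additive, so one cannot conclude $\dpr(E) \ge 2$ this way.  Instead I would argue with mutually indiscernible sequences: choosing an indiscernible sequence inside $X$ and one inside $Y$ that are mutually indiscernible, the ``all slices finite'' hypothesis should let one isolate a depth-$2$ pattern --- two mutually indiscernible sequences, a formula for each, and a single parameter witnessing exactly one alternation in each --- contradicting $\dpr(K) \le 1$.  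Making this array argument (and the variant underlying the additive square-root refinement) go through, and checking that the non--strong-minimality hypothesis is used exactly where Hausdorffness and non-triviality require it, is the real content; the rest is formal topology of filter bases, essentially as in \cite{dpm1}.
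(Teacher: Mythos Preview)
The paper does not prove this statement; Fact~\ref{cantop} is quoted from \cite[Theorem~1.3]{dpm1} and used as a black box, so there is no in-paper proof to compare against.  Your outline does follow the architecture of the original argument.  One minor correction: the ``obstacle'' you pose for the naive dimension count for ($\dagger$) is not real.  Your $E$ is in definable bijection with $X \times Y$ via $(c,x) \mapsto (x,x-c)$, and dp-rank is \emph{additive} on products (not merely subadditive), so $\dpr(E) = \dpr(X) + \dpr(Y) \ge 2$; the finite-fiber projection to the $c$-coordinate then gives $\dpr(E) \le \dpr(K) \le 1$, and you have the contradiction without any indiscernible-sequence argument.

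The substantive gap is elsewhere.  The ``refinement'' needed for continuity of addition---an infinite definable $X'$ with $(X'-X')+(X'-X') \subseteq X-X$---is not an avatar of ($\dagger$) in any way you have indicated.  Your reduction to finding $X' \subseteq X$ (with $0 \in X$) satisfying $X'+X'-X' \subseteq X$ is correct, but ($\dagger$) only manufactures sets $Z$ with $Z-Z$ controlled; it says nothing about three-fold combinations $Z+Z-Z$, and iterating ($\dagger$) or passing to dilates does not close the gap.  The same vagueness afflicts your treatment of multiplicative continuity and the V-topology axiom: ``applying ($\dagger$) to $X$ and $X^{-1}$ produces such a $C$'' is an assertion, not an argument, and unwinding it does not yield the required implication $xy \in C \Rightarrow x \in B \text{ or } y \in B$.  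In the source \cite{dpm1} these verifications require genuine work beyond ($\dagger$)---roughly, one shows in a saturated model that the type of infinitesimals is closed under the field operations and is the maximal ideal of an externally definable valuation ring, from which the V-topology structure is read off---and your sketch does not supply that analysis.
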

The topology $\tau_K$ is called the \emph{canonical topology} on
$(K,+,\cdot,\ldots)$.
\begin{lemma} \label{only-top}
  Let $(K,+,\cdot,\ldots)$ be as in Fact~\ref{cantop}.  Let $\tau_0$
  be a non-discrete, Hausdorff, definable ring topology on $K$.  Then
  $\tau_0$ is the canonical topology $\tau_K$.
\end{lemma}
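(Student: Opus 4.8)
The plan is to show that any non-discrete Hausdorff definable ring topology $\tau_0$ on $K$ refines the canonical topology $\tau_K$, and conversely, and then invoke the uniqueness of V-topologies up to refinement. The key point is the characterization of $\tau_K$ in Fact~\ref{cantop}: a neighborhood basis of $0$ is given by sets of the form $X - X$ where $X \subseteq K$ is definable and infinite. So I would first show $\tau_K$ is coarser than $\tau_0$. Given a definable infinite $X \subseteq K$, I want $X - X$ to be a $\tau_0$-neighborhood of $0$. Since $(K,\tau_0)$ is a topological ring and $X$ is infinite, $X$ has an accumulation point in the completion (or: $X$ cannot be $\tau_0$-discrete, because an infinite definable set in a dp-minimal structure carrying a definable non-discrete ring topology cannot be discrete — one can extract a convergent non-eventually-constant sequence by saturation/honest definitions, or argue directly that a $\tau_0$-discrete infinite definable set would contradict dp-minimality of the topology). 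Then two distinct points of $X$ that are $\tau_0$-close produce an element of $X - X$ that is $\tau_0$-small; making this uniform gives that every $\tau_0$-neighborhood of $0$ contains some $X - X$, i.e. $\tau_K \subseteq \tau_0$.

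Next I would show the reverse inclusion, $\tau_0 \subseteq \tau_K$. Here I would use that $\tau_K$ is a V-topology (the last clause of Fact~\ref{cantop}) and that $\tau_0$ is a definable ring topology on the same field. The standard fact is that on a field, a ring topology coarser than a V-topology, or comparable to it, is forced to coincide: more precisely, V-topologies are the minimal non-discrete ring topologies in an appropriate sense, and any two comparable non-discrete Hausdorff field topologies with one a V-topology must be equal (this is in the V-topology literature, e.g. \cite{PZ} / Prestel–Ziegler). So once I have $\tau_K \subseteq \tau_0$ with $\tau_K$ a non-discrete V-topology and $\tau_0$ non-discrete Hausdorff, comparability upgrades to equality. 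Alternatively, and perhaps more self-containedly, I would run the symmetric version of the first-paragraph argument: a $\tau_0$-neighborhood basis at $0$ can also be taken to consist of definable sets (replacing each open neighborhood by a definable subneighborhood, using that $\tau_0$ is a \emph{definable} topology), and an infinite definable $\tau_0$-neighborhood $U$ of $0$ satisfies: $U - U$ is also a $\tau_0$-neighborhood, hence infinite, hence (being definable) $U - U \supseteq$ nothing smaller automatically — rather, I use that $U$ itself is infinite and definable so $U - U \in \tau_K$, giving that every $\tau_0$-neighborhood of $0$ contains a $\tau_K$-neighborhood, i.e. $\tau_0 \subseteq \tau_K$.

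The main obstacle I expect is the first inclusion: showing that for an arbitrary infinite definable $X$, the set $X - X$ is genuinely a $\tau_0$-neighborhood of $0$, \emph{uniformly} enough that as $X$ ranges over all infinite definable sets the $X-X$ are cofinal among $\tau_0$-neighborhoods. The subtlety is that an infinite definable set need not look like a neighborhood of any point in $\tau_0$; one must instead argue that $X$ has two $\tau_0$-arbitrarily-close distinct points. This is where dp-minimality enters essentially: a definable infinite subset of a dp-minimal expansion of a field equipped with a non-discrete definable ring topology cannot be $\tau_0$-discrete (otherwise one builds, via the topology, a definable ``closeness'' relation witnessing too much combinatorial complexity, or one directly contradicts that $\tau_0$ is non-discrete by translating). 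Once $X$ is non-$\tau_0$-discrete, for every $\tau_0$-neighborhood $V$ of $0$ there are distinct $x, x' \in X$ with $x - x' \in V$, so $X - X$ meets every $V$; combined with the fact (from Fact~\ref{cantop}) that the $X - X$ already form a $\tau_K$-neighborhood basis, a compactness/cofinality argument shows they are cofinal downward among $\tau_0$-neighborhoods too, yielding $\tau_K \subseteq \tau_0$. After that, equality follows from the V-topology rigidity as above.
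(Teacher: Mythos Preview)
Your ``alternatively'' argument in the second paragraph is essentially the paper's proof, and it is already enough on its own. The paper shows only the inclusion $\tau_0 \subseteq \tau_K$ (i.e.\ $\tau_0$ is coarser): given a $\tau_0$-neighborhood $U$ of $0$, pick a definable $\tau_0$-neighborhood $V$ with $V - V \subseteq U$; since $\tau_0$ is non-discrete, $V$ is infinite, so $V - V$ is a basic $\tau_K$-neighborhood of $0$ contained in $U$. Then the paper finishes using the fact that V-topologies admit no strictly coarser non-discrete Hausdorff ring topology: since $\tau_0$ is such a coarsening of the V-topology $\tau_K$, it must equal $\tau_K$.

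You have the V-topology rigidity fact pointed the wrong way. Minimality of $\tau_K$ lets you upgrade $\tau_0 \subseteq \tau_K$ to equality, not $\tau_K \subseteq \tau_0$. So your plan of first proving $\tau_K \subseteq \tau_0$ and then invoking V-topology rigidity does not work as stated; there is no corresponding maximality statement for V-topologies among ring topologies.

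More seriously, your first-paragraph argument for $\tau_K \subseteq \tau_0$ has a real gap, and this inclusion is the one you identify as the main obstacle --- correctly, because it is unnecessary. Two issues: (a) You never justify why an arbitrary infinite definable $X$ must fail to be $\tau_0$-discrete. For the canonical topology this is built into Fact~\ref{cantop}, but for an arbitrary definable ring topology $\tau_0$ it is essentially what you are trying to prove. (b) Even granting that $X$ is not $\tau_0$-discrete, all you obtain is that for every $\tau_0$-neighborhood $V$ of $0$ there exist distinct $x,x' \in X$ with $x - x' \in V$; this says $0$ lies in the $\tau_0$-closure of $(X - X)\setminus\{0\}$, not that $X - X$ is a $\tau_0$-neighborhood of $0$. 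The ``compactness/cofinality'' patch you suggest does not bridge this gap.

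In short: drop the first paragraph entirely, keep the ``alternatively'' argument, and apply V-topology minimality in the correct direction.
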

\begin{proof}
  Among non-discrete Hausdorff ring topologies on $K$, V-topologies
  are maximal with respect to coarsening---there are no strictly coarser ring topologies \cite[Theorem~3.2]{PZ}.  As $\tau_K$ is a
  V-topology, it suffices to show that $\tau_0$ is coarser than
  $\tau_K$.  Let $U$ be a $\tau_0$-neighborhood of 0.  It suffices to
  show that $U$ is a $\tau_K$-neighborhood of 0.  By
  $\tau_0$-continuity of $+, -$, there is a $\tau_0$-neighborhood $V$
  of $0$ such that $V - V \subseteq U$.  As $\tau_0$ is definable, we
  can take $V$ to be a definable set.  As $\tau_0$ is a non-discrete
  ring topology, $0 \in K$ is not $\tau_0$-isolated.  Therefore $V$ is
  infinite.  Then $V - V$ is a $\tau_K$-neighborhood of 0 by
  definition of $\tau_K$, and so $U$ is a $\tau_K$-neighborhood of 0.
\end{proof}
If $R \ne K = \Frac(R)$, then $R$ induces a ring topology on $K$
characterized by the fact that $\{aR : a \in K^\times\}$ is a
neighborhood basis of 0, or equivalently, characterized by the fact
that the set of non-zero ideals of $R$ form a neighborhood basis of 0
\cite[Example~1.2]{PZ}.  Following \cite{mana}, we call this topology
the \emph{$R$-adic topology}.
\begin{lemma} \label{moreover}
  Suppose $R \ne K = \Frac(R)$ and $R$ is dp-minimal and Noetherian.
  Let $\tilde{R}$ be the integral closure of $R$.  Then the $R$-adic
  topology agrees with the $\tilde{R}$-adic topology.  Moreover there
  is $a \in K^\times$ such that $R \supseteq a \tilde{R}$.
\end{lemma}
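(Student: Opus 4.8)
The plan is to exploit what we already know: by Proposition~\ref{um} (applicable since $R$ is a dp-finite $W_n$-domain, because dp-minimal Noetherian domains have finite breadth by Corollary~\ref{hah}), the integral closure $\tilde{R}$ is a definable henselian DVR, $R \to \tilde{R}$ is a local homomorphism, and $\tilde{R}/\tilde{\mm}$ is a \emph{finite} extension of $R/\mm$. I would first establish the ``moreover'' clause, since it implies the topological statement. The key observation is that $\tilde{R}$ is a finitely generated $R$-module: this is a standard fact for the integral closure of a one-dimensional Noetherian domain whose fraction field is... well, in general one needs a condition, but here we can argue directly. Actually, the cleanest route is via breadth: $\tilde{R}$ is an $R$-submodule of $K$, and $\br_R(K) = \br_R(R) < \infty$, so $\tilde{R}$ has finite breadth as an $R$-module; combined with the fact that $\tilde{R}/R$ is supported only at the maximal ideal $\mm$ (since $R$ and $\tilde{R}$ agree after localizing away from $\mm$, as $R_{(0)} = K = \tilde{R}_{(0)}$ and there is only one other prime), one gets that $\tilde{R}/R$ has finite length as an $R$-module, hence is finitely generated, hence $\tilde{R}$ is a finitely generated $R$-module.

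Once $\tilde{R}$ is a finitely generated $R$-module, say $\tilde{R} = R y_1 + \cdots + R y_k$ with $y_i \in K^\times$, I would clear denominators: choose $a \in R \setminus \{0\}$ such that $a y_i \in R$ for all $i$. Then $a\tilde{R} = R(ay_1) + \cdots + R(ay_k) \subseteq R$, which gives the ``moreover'' statement $R \supseteq a\tilde{R}$ with $a \in K^\times$ (indeed $a \in R^\times$ would be false, but $a \in K^\times$ is all that is claimed; in fact $a \in R \setminus\{0\} \subseteq K^\times$).

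For the main clause, that the $R$-adic and $\tilde{R}$-adic topologies agree: both are ring topologies on $K$ in which a neighborhood basis of $0$ is given by the nonzero (fractional) multiples of the ring. From $R \subseteq \tilde{R}$ we get $aR \subseteq a\tilde{R}$ for all $a \in K^\times$, so every $\tilde{R}$-adic neighborhood of $0$ is an $R$-adic neighborhood; thus the $\tilde{R}$-adic topology is coarser than (or equal to) the $R$-adic topology. Conversely, from $a\tilde{R} \subseteq R$ we get that $R$ is an $\tilde{R}$-adic neighborhood of $0$, and hence so is $bR \supseteq ba\tilde{R}$ for any $b \in K^\times$; thus the $R$-adic topology is coarser than the $\tilde{R}$-adic topology. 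The two topologies therefore coincide. (Alternatively, and more slickly: since $R$ is dp-minimal and not strongly minimal—it carries a nontrivial definable valuation $\tilde{R}$—Lemma~\ref{only-top} says there is a unique non-discrete Hausdorff definable ring topology on $K$, namely the canonical topology; both the $R$-adic and $\tilde{R}$-adic topologies are non-discrete, Hausdorff, definable ring topologies, so both equal $\tau_K$ and hence each other. This avoids the inclusion-chasing entirely, modulo checking the $R$-adic topology is Hausdorff, which follows from $\bigcap_{n} \mm^n = 0$ by Krull's intersection theorem in the Noetherian local domain $R$.)

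The main obstacle I anticipate is the finiteness of $\tilde{R}$ as an $R$-module, or more precisely pinning down that $\tilde{R}/R$ has finite length: one needs to know that $R$ and $\tilde{R}$ induce the same completions / localizations at the maximal ideal, or argue via the finite-breadth bound together with the fact that $R/\mm$ and $\tilde{R}/\tilde{\mm}$ differ only by a finite extension (Proposition~\ref{um}(5)). If the finitely-generated-module fact is not available cleanly from the cited results, the safest fallback is the uniqueness-of-topology argument in the parenthetical above, which only requires Hausdorffness of the $R$-adic topology (Krull intersection) and definability of both topologies (the $\tilde{R}$-adic one is definable since $\tilde{R}$ is definable and $K$ has finite breadth over $R$; the $R$-adic one is definable since non-zero ideals of the Noetherian ring $R$ are uniformly definable by Proposition~\ref{udef}). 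For the ``moreover'' clause, if finite generation of $\tilde{R}$ genuinely fails, one would instead extract $a$ directly: pick any non-zero $c \in \mm$; I claim $c^N \tilde{R} \subseteq R$ for large $N$, because $\tilde{R}/R$ is annihilated by a power of $\mm$ (as it is a length-finite $R$-module supported at $\mm$)—so this again reduces to the length-finiteness claim, which is the crux.
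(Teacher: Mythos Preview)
Your main approach hinges on showing that $\tilde{R}/R$ has finite length (equivalently, that $\tilde{R}$ is a finitely generated $R$-module), but the justification via breadth does not work: a torsion $R$-module of breadth $1$ supported at a single maximal ideal can certainly have infinite length---for instance the Pr\"ufer module $\Qq_p/\Zz_p$ over $\Zz_p$. More to the point, there exist one-dimensional Noetherian local domains whose integral closure is \emph{not} module-finite (Nagata's non-Japanese examples), so something specific to the dp-minimal hypothesis would be needed, and nothing you cite supplies it. Your own closing paragraph correctly flags this as ``the crux,'' but then circles back to the same unproved length-finiteness claim.

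The paper takes exactly your parenthetical fallback, and in the opposite order from your main argument: first Proposition~\ref{um}(2) gives that $(K,R,\tilde{R})$ is dp-minimal, then Lemma~\ref{only-top} forces both the $R$-adic and $\tilde{R}$-adic topologies to equal the canonical topology $\tau_K$. The step you are missing is that the ``moreover'' clause is then immediate, with no module-theoretic input at all: $R$ is trivially an $R$-adic neighborhood of $0$, hence (since the topologies agree) an $\tilde{R}$-adic neighborhood of $0$, so by definition $R \supseteq a\tilde{R}$ for some $a \in K^\times$. Reversing your order---topology first, then extract $a$---is precisely what makes the proof go through without ever touching finite generation of $\tilde{R}$.
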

\begin{proof}
  The ring $R$ has finite breadth by Corollary~\ref{hah}, and so
  the structure $(K,R,\tilde{R})$ is dp-minimal by
  Proposition~\ref{um}(2).  By Lemma~\ref{only-top}, the $R$-adic
  topology and $\tilde{R}$-adic topology must both equal the canonical
  topology on $(K,R,\tilde{R})$.  Then $R$ is a neighborhood of 0 with
  respect to the $\tilde{R}$-adic topology, so there is $a \in
  K^\times$ such that $R \supseteq a \tilde{R}$.
\end{proof}
Recall the classification of dp-minimal DVRs mentioned in the
introduction (Fact~\ref{dvr-class}), including the division into
equicharacteristic and mixed characteristic cases.  Note that the
equicharacteristic cases always have characteristic 0.

\begin{lemma} \label{finquot}
  Let $R$ be a mixed characteristic dp-minimal DVR.  If $a \in R
  \setminus \{0\}$, then $R/aR$ is finite.
\end{lemma}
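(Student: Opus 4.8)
The plan is to reduce the statement to a purely commutative-algebraic fact: a discrete valuation ring with finite residue field has $R/aR$ finite for every nonzero $a$. The only model-theoretic input is used to produce the finite residue field, and it comes entirely from Fact~\ref{dvr-class}.

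First I would unwind the hypothesis. Since $R$ is a \emph{mixed characteristic} dp-minimal DVR, it lies in case (2) of Fact~\ref{dvr-class} (by definition, the mixed characteristic case is the one with finite residue field). So $R$ is a DVR --- a local Noetherian domain of Krull dimension $1$ whose maximal ideal $\mm$ is principal, say $\mm = (\pi)$ with $\pi \neq 0$ --- and its residue field $k = R/\mm$ is finite, of cardinality $q$ say. This is the only place dp-minimality enters.

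Next, fix $a \in R \setminus \{0\}$ and let $n = v(a) \ge 0$, where $v \colon \Frac(R)^\times \to \Zz$ is the valuation; then $a = u\pi^n$ for some unit $u \in R^\times$, so $aR = \pi^n R = \mm^n$, and it suffices to show $R/\mm^n$ is finite. Consider the chain $R \supseteq \mm \supseteq \mm^2 \supseteq \cdots \supseteq \mm^n$ (with $\mm^n = (\pi^n) \neq 0$ since $R$ is a domain). For $0 \le i < n$, the $R$-module map $R \to \mm^i$ sending $x \mapsto \pi^i x$ is a bijection carrying $\mm$ onto $\mm^{i+1}$, so it induces an isomorphism $R/\mm \cong \mm^i/\mm^{i+1}$; hence each graded piece has cardinality $q$ and $|R/\mm^n| = q^n < \infty$. (Equivalently: $R/aR$ is an Artinian local ring with finite residue field, hence of finite length, hence finite.) Therefore $R/aR$ is finite.

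I do not expect a genuine obstacle here: once Fact~\ref{dvr-class} has supplied the finite residue field, the remaining argument is elementary. The only step requiring any care is correctly reading off ``finite residue field'' from the phrase ``mixed characteristic dp-minimal DVR,'' and noting that a mixed characteristic DVR with \emph{infinite} residue field (such as the ring of integers of $\Qp^{un}$) is excluded precisely because it fails to be dp-minimal.
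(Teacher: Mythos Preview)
Your proof is correct and follows essentially the same approach as the paper: both reduce to showing $R/\pi^n R$ is finite using that the residue field is finite (via Fact~\ref{dvr-class}), and both argue via the filtration $R \supseteq \pi R \supseteq \cdots \supseteq \pi^n R$ with each successive quotient isomorphic to $R/\mm$. The only cosmetic difference is that the paper phrases this as an induction on $n$ with a short exact sequence, while you multiply the cardinalities of the graded pieces directly.
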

\begin{proof}
  Let $\pi \in R$ generate the maximal ideal.  Then $a = \pi^n u$
  where $n$ is the valuation of $a$ and $u$ is a unit, and so $R/aR =
  R/\pi^n R$.  We claim that $R/\pi^n R$ is finite by induction on
  $n$.  The base case $n = 1$ holds by Fact~\ref{dvr-class}.  If $n >
  0$, then we have a short exact sequence
  \begin{equation*}
    0 \to \pi R / \pi^n R \to R / \pi^n R \to R / \pi R \to 0
  \end{equation*}
  and an isomorphism $\pi R / \pi^n R \to R / \pi^{n-1} R$.  By
  induction, $R / \pi R$ and $R / \pi^{n-1} R$ are finite, so $R /
  \pi^n R$ is finite.
\end{proof}
If $R$ is a ring, a \emph{finite-index subring} is a subring $S
\subseteq R$ with finite index in the additive group $(R,+)$.
\begin{lemma} \label{yes}
  Suppose $R$ is a mixed characteristic dp-minimal DVR and $S$ is a
  finite index subring.
  \begin{enumerate}
  \item $S$ is dp-minimal.
  \item $S$ is Noetherian.
  \end{enumerate}
\end{lemma}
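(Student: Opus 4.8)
The plan is to exploit the finite-index hypothesis to relate the structure and module theory of $S$ to that of $R$. First I would record the basic consequences of $[R : S] < \infty$ in the additive group: there is a nonzero integer $N$ (for instance $N = [R:S]$) with $NR \subseteq S$, so $S$ contains $N R$, and in particular $S$ contains a nonzero ideal of $R$; moreover $R$ is a finitely generated $S$-module, since $R/S$ is a finite abelian group. Also $S$ and $R$ have the same fraction field $K$, because $N \in S$ and hence every element $r \in R$ equals $(Nr)/N$ with $Nr \in S$; thus $S$ is an overring of itself with $\Frac(S) = K$, and $R$ is an overring of $S$.

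For part (1), dp-minimality of $S$: the idea is that $S$ is interpretable in an expansion of $R$ of the same dp-rank. Since $NR \subseteq S \subseteq R$, the subgroup $S$ of $(R,+)$ is determined by finitely many cosets, so $S$ is a definable subset of $R$ once we know $R/NR$ is finite --- which holds by Lemma~\ref{finquot}. Hence $S$, with its induced ring structure, is definable in $R$, and a definable substructure of a dp-minimal structure is dp-minimal (dp-rank does not increase on definable subsets, and $S$ is infinite since it contains the infinite set $NR$). Alternatively, and perhaps more cleanly, one can observe that $(S,+,\cdot)$ is interpretable in $(R,+,\cdot)$ via the definable subset $S \subseteq R$, so $\dpr(S) \le \dpr(R) = 1$, and $\dpr(S) \ge 1$ since $S$ is infinite.

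For part (2), Noetherianity of $S$: here I would use that $R$ is a finitely generated $S$-module and that $R$ is Noetherian. A finitely generated module over $S$ that happens to be Noetherian as an $S$-module forces chain conditions down to $S$ only if we are careful: the clean route is the Eakin--Nagata theorem, which says precisely that if $R$ is a Noetherian ring that is finitely generated as a module over a subring $S$, then $S$ is Noetherian. Since $R$ is Noetherian (being a DVR) and finitely generated as an $S$-module, Eakin--Nagata gives that $S$ is Noetherian. If one wants to avoid citing Eakin--Nagata, one can instead argue directly: $R$ is integral over $S$ (as $R$ is a finitely generated $S$-module), so $R$ is an integral extension with $\Frac(R) = \Frac(S) = K$, hence $R \subseteq \tilde{S}$, the integral closure of $S$; combined with $NR \subseteq S$ this pins down $S$ between $NR$ and a Noetherian ring, and one can run a direct ideal-chain argument using that every ideal $I$ of $S$ satisfies $NR \cdot I \subseteq S \cap (IR)$ and $IR$ is finitely generated over $R$. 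The main obstacle is making this last direct argument fully rigorous; invoking Eakin--Nagata sidesteps it entirely, so I expect the actual proof to cite that theorem (and possibly note that $S$ is a one-dimensional Noetherian domain as a bonus).
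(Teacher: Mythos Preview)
Your argument for part~(1) is essentially identical to the paper's: both use $NR \subseteq S$, invoke Lemma~\ref{finquot} to see that $R/NR$ is finite, conclude that $S$ is a finite union of cosets of $NR$ and hence definable in $R$, and deduce dp-minimality.

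For part~(2) you take a genuinely different route. You invoke Eakin--Nagata, using that $R$ is Noetherian and finitely generated as an $S$-module (since $R/S$ is a finite abelian group). This is correct and clean. The paper, however, does \emph{not} cite Eakin--Nagata; instead it gives a direct elementary argument that reuses Lemma~\ref{finquot}: for a nonzero ideal $I \subseteq S$, pick nonzero $a \in I$, observe $anR \subseteq aS \subseteq I \subseteq R$, apply Lemma~\ref{finquot} to see $R/anR$ is finite, conclude $I/aS$ is finite, and take $a$ together with finitely many coset representatives as generators of $I$. Your approach is shorter and more conceptual, but relies on a nontrivial black-box theorem; the paper's argument is self-contained and keeps the proof within the tools already developed (specifically Lemma~\ref{finquot}), at the cost of a few extra lines. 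Your anticipated ``direct ideal-chain argument'' is close in spirit to what the paper actually does, though the paper's version is simpler than the sketch you gave: it does not need to analyze $IR$ or integrality, only the finiteness of $R/anR$.
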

\begin{proof}
  \begin{enumerate}
  \item It suffices to show that $S$ is definable in $R$.  Let $n$ be
    the index of $S$ in $R$.  Then $n$ annihilates $R/S$, so $S
    \supseteq nR$.  As $\Frac(R)$ has characteristic 0, $R \models n
    \ne 0$ and so $R/nR$ is finite by Lemma~\ref{finquot}.  Then $S$ is
    a finite union of additive cosets of $nR$, so $S$ is definable.
  \item As in the previous point, $S \supseteq nR$ for some $n$.  Let
    $I$ be an ideal in $S$.  We claim that $I$ is finitely generated.
    We may assume $I \ne 0$.  Take non-zero $a \in I \subseteq S
    \subseteq R$.  Then
    \begin{equation*}
      anR \subseteq aS \subseteq I \subseteq S \subseteq R.
    \end{equation*}
    By Lemma~\ref{finquot}, $R/anR$ is finite.  Therefore $I/aS$ is
    finite.  Let $\{b_1,\ldots,b_m\}$ be a finite set of coset
    representatives of $aS$ in $I$.  Then $I$ is generated as an
    $S$-ideal by $\{a,b_1,\ldots,b_m\}$. \qedhere
  \end{enumerate}
\end{proof}
We can now classify dp-minimal Noetherian domains.
\begin{theorem} \label{class}
  The dp-minimal Noetherian domains are precisely the following:
  \begin{enumerate}
  \item Dp-minimal fields.
  \item Equicharacteristic dp-minimal DVRs.
  \item Finite-index subrings of mixed characteristic dp-minimal DVRs.
  \end{enumerate}
\end{theorem}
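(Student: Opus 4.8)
The plan is to prove both inclusions. For the easy direction, I would verify that each of the three listed types of rings is in fact a dp-minimal Noetherian domain. Dp-minimal fields are trivially dp-minimal Noetherian domains. An equicharacteristic dp-minimal DVR is a DVR (hence Noetherian) and dp-minimal by Fact~\ref{dvr-class}. For a finite-index subring $S$ of a mixed characteristic dp-minimal DVR $R$, Lemma~\ref{yes} gives exactly that $S$ is dp-minimal and Noetherian; since $S$ is a subring of a domain it is a domain. So the three families are genuinely dp-minimal Noetherian domains, and they are pairwise distinct (fields versus non-fields; DVRs versus non-maximal orders can be distinguished by whether the ring is integrally closed).

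For the hard direction, let $R$ be a dp-minimal Noetherian domain; I must show $R$ falls into one of the three classes. If $R$ is a field we are in case (1). So assume $R \ne K := \Frac(R)$. By Theorem~\ref{tricho1} (or the list of facts preceding it), $\characteristic(K) = 0$, $R$ is a henselian local domain of Krull dimension $1$ with finite breadth, and the integral closure $\tilde R$ is a definable henselian DVR. By Theorem~\ref{tricho1} again, the residue field $k$ of $R$ is either of characteristic $0$ or finite, and by Proposition~\ref{um}(4)--(5) the same dichotomy holds for the residue field of $\tilde R$; moreover $\tilde R$ is dp-finite (being definable in the dp-finite structure $(K,+,\cdot,R)$), so by Fact~\ref{dvr-class} together with the observation preceding Fact~\ref{trickle}, $\tilde R$ is either an equicharacteristic dp-minimal DVR or a mixed characteristic dp-minimal DVR.

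Now I split into these two cases. If $\tilde R$ is an equicharacteristic dp-minimal DVR: by Lemma~\ref{moreover} there is $a \in K^\times$ with $R \supseteq a\tilde R$, and also $R \subseteq \tilde R$. In the equicharacteristic case the residue field $\tilde R/\tilde\mm$ is a dp-minimal field of characteristic $0$, hence infinite; by Proposition~\ref{um}(5), $\tilde R/\tilde\mm$ is a finite extension of $R/\mm$, which forces $R/\mm$ to be infinite too. The goal is to show $R = \tilde R$. Since $a\tilde R \subseteq R \subseteq \tilde R$ and $\tilde R/a\tilde R$ has finite length (it is a module over the DVR $\tilde R$), $R$ sits between two $\tilde R$-submodules of $\tilde R$ with finite-length quotient; in particular $R/a\tilde R$ is a finite-length $\tilde R$-module, so $R$ is finitely generated as an $\tilde R$-module and hence $\tilde R$ is module-finite over $R$. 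But then $\tilde R$ is a finitely generated $R$-submodule of $K$ containing $R$, so $\tilde R$ is integral over $R$; being integrally closed already, this is consistent, so I need a genuine argument that $R$ is integrally closed. The cleanest route: $R$ is a $1$-dimensional Noetherian local domain with infinite residue field and finite breadth, so by Example~\ref{deH} (applied to $R$, which is dp-minimal with infinite residue field) $R$ is a $W_1$-domain, i.e. a valuation ring; a Noetherian valuation ring that is not a field is a DVR, hence integrally closed, hence $R = \tilde R$, and we are in case (2). If instead $\tilde R$ is a mixed characteristic dp-minimal DVR: again $a\tilde R \subseteq R \subseteq \tilde R$ for some $a \in K^\times$ (Lemma~\ref{moreover}), and by clearing denominators we may take $a \in \tilde R \setminus \{0\}$; then $[\tilde R : R]$ divides $[\tilde R : a\tilde R] = |\tilde R/a\tilde R|$, which is finite by Lemma~\ref{finquot}. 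Thus $R$ is a finite-index subring of the mixed characteristic dp-minimal DVR $\tilde R$, placing us in case (3).

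The main obstacle is the equicharacteristic case: showing that $R$ is already all of $\tilde R$ rather than a proper (possibly infinite-index) suborder. The reduction to ``$R$ is a valuation ring'' via Example~\ref{deH} is the key move, and it hinges on checking that $R$ itself (not merely an associated valued field) is dp-minimal with infinite residue field — which is exactly what the transfer in Proposition~\ref{um}(5) supplies. Once $R$ is known to be a Noetherian valuation ring it is automatically a DVR and hence integrally closed, collapsing $R = \tilde R$; I should double-check that no degenerate configuration (e.g. $R$ a field sneaking back in) has been overlooked, but the standing assumption $R \ne K$ rules that out.
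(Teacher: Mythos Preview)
Your proof is correct and follows essentially the same route as the paper: the easy direction via Lemma~\ref{yes}, and the hard direction by splitting on the residue characteristic and invoking Example~\ref{deH} when the residue field is infinite, Lemma~\ref{moreover} plus Lemma~\ref{finquot} when it is finite. Two small remarks: the paper organizes the case split directly on the residue field of $R$ (via Theorem~\ref{tricho1}) rather than first passing to $\tilde R$ and transferring back, which lets you skip the detour in your equicharacteristic case and go straight to Example~\ref{deH}; and where you write ``$\tilde R$ is dp-finite'' you need dp-\emph{minimal} to invoke Fact~\ref{dvr-class}, which you get from Proposition~\ref{um}(2) since $\dpr(\tilde R) \le \dpr(K,+,\cdot,R,\tilde R) = \dpr(R) = 1$.
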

\begin{proof}
  The three cases are all dp-minimal Noetherian domains.  For (1) and
  (2) this is obvious, and for (3) this is Lemma~\ref{yes}.
  Conversely, suppose $R$ is a dp-minimal integral domain.  We claim
  that $R$ falls into one of the three cases.  By
  Theorem~\ref{dft}, $R$ is a local ring.  By
  Theorem~\ref{tricho1}, one of three things happens:
  \begin{itemize}
  \item $R$ is a field.  This is case (1).
  \item $R$ is not a field; the fraction field and residue field both
    have characteristic 0.  Then the residue field is infinite, and so
    $R$ is a valuation ring by Example~\ref{deH}.
  \item $R$ is not a field; the fraction field has characteristic 0
    and the residue field is finite.  By Proposition~\ref{um}(2) and
    Lemma~\ref{where}, the integral closure $\tilde{R}$ is a
    dp-minimal DVR.  By Proposition~\ref{um}(4), 
    $\tilde{R}$ is mixed
    characteristic.  By Lemma~\ref{moreover}, there is non-zero $a$
    such that $R \supseteq a \tilde{R}$.  Note \[ a \in a \tilde{R}
    \subseteq R \subseteq \tilde{R},\] so $a$ is a non-zero element of
    $\tilde{R}$.  Then $\tilde{R}/a\tilde{R}$ is finite by
    Lemma~\ref{finquot}, implying that $\tilde{R}/R$ is
    finite. \qedhere
  \end{itemize}
\end{proof}

\begin{acknowledgment}
  The author was supported by the National Natural Science Foundation
  of China (Grant No.\@ 12101131).  The author would like to thank two anonymous referees who found a number of errors and offered many helpful comments.
\end{acknowledgment}


\bibliographystyle{plain} \bibliography{mybib}{}

\end{document}